\documentclass[11pt, reqno]{amsart}


\usepackage{textcomp} 
\usepackage{amsmath, amsfonts,amsthm,amssymb,amsxtra}
\usepackage{amscd}
\usepackage{enumitem}
\usepackage{url}
\usepackage{tikz-cd} 
\usepackage{xcolor}
\usepackage{graphicx}
\usepackage{verbatim}



\setlength{\voffset}{-.7truein}
\setlength{\textheight}{8.8truein}
\setlength{\textwidth}{6.05truein}
\setlength{\hoffset}{-.7truein}


\newtheorem{theorem}{Theorem}[section]
\newtheorem{proposition}[theorem]{Proposition}

\newtheorem{corollary}[theorem]{Corollary}
\newtheorem{question}[theorem]{Question}

\theoremstyle{definition}

\newtheorem{examples}[theorem]{Example}

\theoremstyle{remark}

\newtheorem{remark}[theorem]{Remark}


\numberwithin{equation}{section}


\renewcommand{\epsilon}{\varepsilon}

\renewcommand{\phi}{\varphi}

\begin{document}

{
\title{Maximal contact and symplectic structures}
\author{Oleg Lazarev}
\address{Oleg Lazarev}
\address{Columbia University}
\email{olazarev@math.columbia.edu}  
}

\maketitle

\begin{abstract}

We introduce a procedure for 
gluing Weinstein domains along Weinstein subdomains. By gluing along flexible subdomains, we show that any finite collection of high-dimensional Weinstein domains with the same topology are Weinstein subdomains of a `maximal'
Weinstein domain also with the same topology.  As an application, we produce exotic cotangent bundles containing many closed regular Lagrangians
that are formally Lagrangian isotopic but not Hamiltonian isotopic and also  give a new construction of exotic Weinstein structures on Euclidean space. 
We describe a similar construction in the contact setting which we use to produce `maximal' contact structures and extend several existing results in low-dimensional contact geometry to high-dimensions. We prove that all contact manifolds have symplectic caps, introduce a general procedure for producing contact manifolds with many Weinstein fillings, and give a new proof of the existence of codimension two contact embeddings. 
\end{abstract}

\section{Introduction}\label{sec: intro}
\subsection{The category of Weinstein domains}\label{ssec: cat_weinstein}

One of the main problems in symplectic topology is to classify all symplectic structures on a given smooth manifold. 
In this paper, we will focus on \textit{Weinstein domains}, which are exact symplectic manifolds equipped with a Morse function compatible with the symplectic structure. These domains encompass a large class of exact symplectic manifolds, like cotangent bundles and affine varieties. 
There has been significant progress on classifying Weinstein domains in dimension 4. For example, there is a unique Weinstein structure on $B^4$ and  $T^*S^2$ while there is no Weinstein structure on $S^2 \times D^2$ \cite{CE12}, although it has the necessary smooth topology.
 On the other hand, there is no such classification result for any high-dimensional domain. Furthermore, any high-dimensional smooth domain with the appropriate topology has infinitely many different Weinstein structures, distinguished by the Floer-theoretic invariant  \textit{symplectic cohomology} \cite{MM, CE12}. To further complicate matters, there are Weinstein domains with vanishing symplectic cohomology over certain finite fields but not over the integers \cite{abouzaid_seidel_recombination} or with vanishing symplectic cohomology over the integers but non-vanishing twisted symplectic cohomology \cite{MS}.

Fortunately, there is a natural relationship on the set of Weinstein domains which sheds some light on the classification problem.
This relationship is given by \textit{Weinstein cobordisms}, which are exact symplectic cobordisms that have Morse functions compatible with the symplectic structure. A Weinstein cobordism $W$ has a  splitting of its boundary $\partial W$ into positive and negative components $\partial_+ W, \partial_- W$  on which the Morse function is increasing, decreasing respectively in
an outward direction; see Section \ref{sec: background}. 
These boundaries have natural contact structures and if $W_1, W_2$ are two Weinstein cobordisms such that $\partial_+ W_1, \partial_- W_2$ agree, then we can glue $W_1, W_2$ along this contact manifold to get a Weinstein cobordism $W_1 \circ W_2$. We call this gluing operation \textit{concatenation}. Note that concatenation does not actually require $W_i$ to be Weinstein and also works if $W_i$ are \textit{Liouville} cobordisms, i.e. exact symplectic cobordisms with contact-type boundary, possibly without a compatible Morse function. 
We can also concatenate Weinstein cobordisms to Weinstein domains.
If $W_1$ is a Weinstein domain and $C$ is a Weinstein cobordism with matching contact boundaries, then $W_1 \circ C$ is another Weinstein domain $W_2$; equivalently, $W_1$ is a Weinstein subdomain of $W_2$.
If there is a Weinstein cobordism from domain $W_1$ to $W_2$ and from $W_2$ to $W_3$, then there is also a Weinstein cobordism from $W_1$ to $W_3$ obtained by concatenating.  

By concatenating Weinstein cobordisms, we can enrich the set of Weinstein domains to a category. More precisely, let $\mathfrak{Weinstein}$ be the category whose objects are Weinstein domains and whose morphisms are given by Weinstein cobordisms between Weinstein domains, with composition of morphisms given by concatenation; both domains and cobordism will be up to Weinstein homotopy. Since we are mainly interested in symplectic structures that are the same from the point of view of smooth topology, we will often fix an \textit{almost} symplectic structure  $(W^{2n}, J)$, an almost complex structure $J$ on $W^{2n}$. Let $\mathfrak{Weinstein}(W, J)$ be the subcategory of $\mathfrak{Weinstein}$ whose objects are Weinstein domains in the almost symplectomorphism class $(W,J)$ and whose morphisms are smoothly trivial Weinstein cobordisms; gluing a smoothly trivial cobordism to a Weinstein domain does not change the almost symplectomorphism type. As we will see, many existing results about Weinstein domains can be reformulated in terms of the Weinstein categories $\mathfrak{Weinstein}$ and $\mathfrak{Weinstein}(W,J)$. In this paper, we investigate some properties of these categories and give applications. We also note that there is a different category in symplectic topology that is also called the Weinstein category \cite{Weinstein_category}, whose objects are (closed) symplectic manifolds and whose morphisms are Lagrangian correspondences; however this more general category seems less suited for the geometric applications we have in mind, like Corollaries \ref{cor: finitely_lagrangians}, 
\ref{cor: exotic_lagrangians}.

One important property of Weinstein cobordisms is that they induce maps on J-holomorphic curve invariants like symplectic cohomology. More precisely, Viterbo \cite{viterbo1} showed that if $W_0$ is a Weinstein subdomain of $W_1$, then there is a transfer map $SH(W_1) \rightarrow SH(W_0)$ on symplectic cohomology. 
That is, symplectic cohomology $SH$ is a contravariant functor from $\mathfrak{Weinstein}$ to the category of unital rings. More generally, symplectic cochains gives a functor from $\mathfrak{Weinstein}$ to $L_\infty$-algebras and the wrapped Fukaya category gives a functor to triangulated categories \cite{Abouzaid_Seidel} but in this paper we will focus mainly on the symplectic cohomology functor since the implications of such a functor are already quite interesting. Via this functor, certain properties of the ring category can be transferred to the Weinstein category. Since J-holomorphic curves and associated functors like symplectic cohomology are the only known source of rigidity of the Weinstein category, one guiding question is to what extent the Weinstein category differs from the category of rings. Although there are different Weinstein domains with the same symplectic cohomology \cite{MS}, we will show that certain properties of the category of rings have shadows in the Weinstein setting. In particular, we will focus more on the morphisms of the Weinstein category than the objects.

The category of rings has a terminal object, the zero ring, and any ring has a unique map to that ring. This trivial algebraic statement has an interesting geometric analog. Cieliebak and Eliashberg \cite{CE12} showed that if $(W^{2n},J)$ is \textit{almost Weinstein}, i.e. $W^{2n}$ has a smooth Morse function whose critical points have index at most $n$, and $n \ge 3$, then there is a special  \textit{flexible} Weinstein structure $W_{flex}$ 
in $\mathfrak{Weinstein}(W,J)$. This flexible structure has vanishing symplectic cohomology \cite{MS} and satisfies an h-principle \cite{CE12}, meaning that its symplectic topology reduces to the underlying algebraic topology.  
In previous work, the author \cite{Lazarev_critical_points}
proved that $W_{flex}$ is a minimal element in $\mathfrak{Weinstein}(W, J)$. Namely, for any Weinstein structure $W^{2n}, n \ge 3,$ there is a smoothly trivial Weinstein cobordism $C^{2n}$ from $W_{flex}^{2n}$ to $W^{2n}$; see Theorem \ref{thm: prev_crit_points} below. 
However unlike in the category of rings which has a unique terminal object,  it is not known whether the flexible structure is the only minimal object in $\mathfrak{Weinstein}(W,J)$ or whether there can exist  non-flexible minimal elements, e.g. subflexible exotic Weinstein balls. On a related note, the category of unital rings is not symmetric: any non-zero ring has a unital ring map to the zero ring but not conversely. Similarly, the Weinstein category is not symmetric: the existence of a Weinstein cobordism from $W_0$ to $W_1$ does not imply the existence of a Weinstein cobordism from $W_1$ to $W_0$. 
So directionality is quite important in the Weinstein setting, unlike in the smooth case.

Another basic property of the category of rings is the existence of pullbacks. Hence it is natural to ask if the Weinstein category has pushouts, which are dual to pullbacks. Although we do not know if pushouts exist in this category, we will show that there are objects that satisfy a certain extension property similar to that of a pushout. Geometrically, this extension corresponds to  a certain gluing  
of Weinstein domains along subdomains, or Weinstein cobordisms along contact manifolds. 
As we explained, two Liouville cobordisms $W_1, W_2$  with $\partial_+ W_1= \partial_- W_2$ can be concatenated to produce $W_1 \circ W_2$, which corresponds to composition of morphisms. 
Weinstein cobordisms have an additional property which will be crucial in this paper:
they can be glued if their \textit{negative} boundaries $\partial_-W_1, \partial_-W_2$ agree, 
without the condition $\partial_+W_1 = \partial_-W_2$ on the positive boundary of $W_1$.
We will call this operation \textit{stacking} to distinguish it from concatenation.  
\begin{theorem}\label{thm: stacking}
Suppose $W_1, \cdots, W_k$ are Weinstein cobordisms  such that $\partial_- W_i = (Y, \xi)$ for all $i$. Then there are Weinstein cobordisms $C_i$ with $\partial_- C_i = \partial_+ W_i$ such that $W_i \circ C_i$ are all Weinstein homotopic. 
 If $W_i^{2n}$ are smoothly trivial and $n \ge 3$, then so are $C_i^{2n}$. 
\end{theorem}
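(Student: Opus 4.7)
The plan is to reduce the theorem to constructing a single Weinstein cobordism $W$ with $\partial_- W = (Y, \xi)$ that contains each $W_i$ as a Weinstein subcobordism starting at the negative boundary $(Y,\xi)$. Given such a $W$, take $C_i := \overline{W \setminus W_i}$, which is automatically a Weinstein cobordism from $\partial_+ W_i$ to $\partial_+ W$. Then $W_i \circ C_i = W$ up to Weinstein homotopy for every $i$, which is precisely the theorem's conclusion.

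We will build $W$ by induction on $k$; the case $k=1$ is immediate. The induction step reduces to the following amalgamation claim: for any two Weinstein cobordisms $V_1, V_2$ with $\partial_- V_j = (Y, \xi)$, there exists a Weinstein cobordism $V$ with $\partial_- V = (Y, \xi)$ that contains both $V_1$ and $V_2$ as Weinstein subcobordisms over $(Y, \xi)$. Iterating this claim $k-1$ times produces the desired $W$.

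To establish the amalgamation claim, we first fix Weinstein handle decompositions of $V_1$ and $V_2$ built above $Y \times [0,1]$. We attach the handles of $V_1$ on top of $Y \times [0,1]$ so that $V_1$ sits as a Weinstein subcobordism of the partial construction. To incorporate $V_2$, we exploit that its attaching Legendrians lie in $(Y, \xi)$, which appears as a trivial Weinstein subdomain (a collar $Y \times [0, \epsilon]$) both inside $V_1$ near $\partial_- V_1$ and inside $V_2$ near $\partial_- V_2$. Invoking the paper's gluing procedure for Weinstein cobordisms along flexible Weinstein subdomains --- a trivial collar has no critical handles and is thus flexible --- we glue a copy of $V_2$ onto a cylindrical enlargement of $V_1$ along this common collar. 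The resulting $V$ is a Weinstein cobordism containing both $V_1$ and $V_2$ as Weinstein subcobordisms over $(Y, \xi)$.

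The main obstacle is the amalgamation itself, namely checking that the paper's gluing procedure can be applied in the cobordism setting along a trivial collar and yields a Weinstein cobordism in which both $V_1$ and $V_2$ sit as Weinstein subcobordisms over the common negative boundary. For the smoothly trivial case of the theorem, each $V_i$ admits a Weinstein handle decomposition whose underlying smooth handles cancel in pairs (this is what ``smoothly trivial'' means); the amalgamation introduces only parallel copies of these handles plus a cancellable pair of gluing handles, so the total smooth handle structure of $V$ also cancels in pairs, ensuring that $V$, and therefore each $C_i$, is smoothly a product. The dimension hypothesis $n \ge 3$ enters via the Cieliebak--Eliashberg handle cancellation lemma, which requires $2n \ge 6$ to cancel subcritical/critical pairs smoothly.
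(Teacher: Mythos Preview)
Your argument has a genuine gap at the amalgamation step. You write that you will ``invoke the paper's gluing procedure for Weinstein cobordisms along flexible Weinstein subdomains'' to glue $V_2$ to $V_1$ along a common collar, but that gluing procedure \emph{is} the stacking construction being proved in this theorem; there is no prior black box in the paper you can call. Concretely, the handles of $V_2$ are specified by isotropic attaching spheres in $(Y,\xi)$, and so are those of $V_1$. Nothing you have said guarantees these two families of attaching spheres are disjoint in $(Y,\xi)$; if they intersect, you cannot simply attach the $V_2$-handles after the $V_1$-handles and claim the result contains $V_2$ as a subcobordism over $(Y,\xi)$. The actual content of the proof is precisely this disjointness: one perturbs the attaching isotropics $\Lambda_i = S_i \cap (Y,\xi)$ of the various $W_i$ by ambient contact isotopies so that they become pairwise disjoint (this uses that each $\Lambda_i$ is stratified by isotropics of dimension $\le n-1$ inside the $(2n-1)$-manifold $Y$, so a Thom jet-transversality argument applies). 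Once the attaching data are disjoint, all handles can be attached in any order, and the resulting cobordism $W$ visibly contains each $W_i$ as a subcobordism.

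Your treatment of the smoothly trivial case is also off. There are no extra ``gluing handles'' in the construction; $W$ consists exactly of the union of the handles of the $W_i$. The issue is that after making the $\Lambda_i$ disjoint, the Legendrian attaching sphere of an $n$-handle from $W_j$ may now intersect the belt sphere of an $(n-1)$-handle from $W_i$ with $i\ne j$. The paper's argument uses the Whitney trick (this is where $n\ge 3$ enters, so that Whitney disks are generically embedded and disjoint from the other Legendrians) to remove these extraneous intersections while preserving the single algebraic intersection of each $\Lambda_i$ with its own cancelling $(n-1)$-handle; your invocation of ``Cieliebak--Eliashberg handle cancellation'' does not address this linking problem.
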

See Theorem \ref{thm: stacking2} for a more precise version. 
We can take $(Y, \xi)$ to be the positive end of some Weinstein domain $W_0$ and form the Weinstein homotopic domains $W_0 \circ W_i \circ C_i$. 
Then Theorem \ref{thm: stacking} can be formulated in categorical terms as follows: every diagram in $\mathfrak{Weinstein}(W^{2n},J), n \ge 3,$ of the form
\begin{equation}\label{diag: comm1}
\begin{tikzcd}
W_1  &  \\
W_0 \arrow{r}{\phi_{02}} \arrow[swap]{u}{\phi_{01}} & W_2
\end{tikzcd}
\end{equation}
can be extended to a commutative diagram
\begin{equation}\label{diag: comm2}
\begin{tikzcd} 
W_1  \arrow{r}{\phi_{13}} & W_3  \\
W_0 \arrow{r}{\phi_{02}} \arrow[swap]{u}{\phi_{01}} & W_2\arrow{u}{\phi_{23}}
\end{tikzcd}
\end{equation}
for some Weinstein domain $W_3 \in \mathfrak{Weinstein}(W,J)$. 
A similar result holds in the category $\mathfrak{Weinstein}$, without any restrictions on $n$. The domain $W_3$ is essentially $W_1, W_2$ glued along the subdomain $W_0$. However $W_3$ is not unique. There are many domains that can be fit into the commutative diagram Figure \ref{diag: comm2} and in fact, the stacking construction itself involves many choices; see Section \ref{ssec: concatenate_stacking}. We do not know whether there exists a ``best" possible $W_3$, i.e. whether the Weinstein category actually has pushouts, corresponding to the fact that the category of rings has pullbacks. Ganatra, Pardon, and Shende \cite{Ganatra_Pardon_Shende_ii} recently considered a related but different gluing operation along Weinstein \textit{sectors}, which are Weinstein domains equipped with the extra data of a Weinstein hypersurface in their contact boundary. 
Inclusions of Weinstein sectors induce \textit{covariant} maps on J-holomorphic curve invariants while the inclusions of  Weinstein subdomains considered in this paper induce contravariant maps.

One important aspect of the stacking construction in Theorem \ref{thm: stacking} is that it 
uses Weinstein-ness in a crucial way, unlike the concatenation construction for Liouville cobordisms. 
Hence we do not know whether the extension property above holds for the category of almost symplectomorphic Liouville domains, with morphisms given by smoothly trivial Liouville cobordisms; see Question \ref{question: liouville_stacking}. Understanding the difference between Liouville  and Weinstein domains is a major open problem in symplectic topology. 

Now we discuss some applications of the stacking construction to maximal Weinstein structures. As we explained above, the flexible structure $W_{flex}$ is a minimal element in $\mathfrak{Weinstein}(W,J)$. Given the flexibility of minimal elements,  one would expect a maximal element of  $\mathfrak{Weinstein}(W,J)$ to display rigidity and have rich J-holomorphic curve invariants; it would also seem that such an element could not be produced via an h-principle. Although we do not know whether there is a maximal Weinstein domain that contains \textit{all} other almost symplectomorphic domains as subdomains, we can use the stacking construction to show that any \textit{finite} collection of Weinstein domains does have a maximal element. Even though such maximal elements do have interesting J-holomorphic curve invariants, their construction relies crucially on flexibility methods, in particular the existence of minimal elements and the h-principle for flexible Weinstein structures. 
\begin{theorem}\label{thm: finitely_many_subdomains}
	For any almost symplectomorphic Weinstein domains $W_1^{2n}, \cdots, W_k^{2n}, n \ge 3,$ there exists an almost symplectomorphic Weinstein domain $W^{2n}$ 
	such that all $W_i^{2n}$ are Weinstein subdomains 
	of $W^{2n}$
	and $W^{2n}\backslash W_i^{2n}$ is a smoothly trivial Weinstein cobordism. 
\end{theorem}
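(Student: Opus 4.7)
The plan is to combine the stacking theorem (Theorem~\ref{thm: stacking}) with the fact, recalled as Theorem~\ref{thm: prev_crit_points}, that the flexible Weinstein representative is a minimal object of $\mathfrak{Weinstein}(W,J)$ when $n \ge 3$. Since $W_1^{2n}, \ldots, W_k^{2n}$ all lie in a common almost symplectomorphism class $(W,J)$, they share a flexible representative $W_{flex} \in \mathfrak{Weinstein}(W,J)$, and Theorem~\ref{thm: prev_crit_points} supplies smoothly trivial Weinstein cobordisms $V_i$ from $W_{flex}$ to $W_i$, so that $W_i$ is Weinstein homotopic to $W_{flex} \circ V_i$ for every $i$.

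The crucial observation is that the cobordisms $V_1, \ldots, V_k$ all share the same negative contact boundary $(Y, \xi) := \partial_+ W_{flex}$, which places them in exactly the setup of Theorem~\ref{thm: stacking}. Applying that theorem yields Weinstein cobordisms $C_i$ with $\partial_- C_i = \partial_+ V_i = \partial_+ W_i$ such that all the compositions $V_i \circ C_i$ are Weinstein homotopic to a common cobordism $V$. Because each $V_i$ is smoothly trivial and $n \ge 3$, the second clause of Theorem~\ref{thm: stacking} guarantees that each cap $C_i$ is smoothly trivial as well.

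Now set $W^{2n} := W_{flex} \circ V$. For each index $i$ we then have the chain of Weinstein homotopies
\begin{equation*}
W \;=\; W_{flex} \circ V \;\simeq\; W_{flex} \circ V_i \circ C_i \;\simeq\; W_i \circ C_i,
\end{equation*}
which exhibits $W_i$ as a Weinstein subdomain of $W$ whose complement is Weinstein homotopic to the smoothly trivial cobordism $C_i$. Since adding smoothly trivial Weinstein cobordisms preserves the almost symplectomorphism type, $W$ itself lies in the class $(W,J)$, as required.

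In this argument all the genuine content is packaged inside Theorem~\ref{thm: stacking}: once the simultaneous caps $C_i$ are available, the rest is formal. Accordingly the main obstacle is not in the present theorem but in the stacking construction itself, and specifically in arranging the $C_i$ to be smoothly trivial when the inputs are. The role of flexibility and Theorem~\ref{thm: prev_crit_points} here is precisely to produce a single common negative boundary $(Y,\xi) = \partial_+ W_{flex}$ along which all of $V_1, \ldots, V_k$ can be stacked at once, converting a collection of pairwise comparisons (each $W_{flex} \hookrightarrow W_i$) into the single domain $W$ containing every $W_i$ simultaneously.
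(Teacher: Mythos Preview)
Your proof is correct and follows essentially the same approach as the paper: decompose each $W_i$ as $W_{flex} \circ V_i$ using Theorem~\ref{thm: prev_crit_points}, then stack the smoothly trivial cobordisms $V_i$ along their common negative boundary $\partial W_{flex}$ via Theorem~\ref{thm: stacking} to produce $W = W_{flex} \circ V$. The only step the paper makes slightly more explicit is the invocation of the uniqueness h-principle for flexible structures \cite{CE12} to justify identifying the a priori different flexibilizations $W_{i,flex}$ with a single $W_{flex}$, which you have folded into the phrase ``they share a flexible representative.''
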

See Theorem \ref{thm: finitely_many_subdomains2}. The key point of Theorem \ref{thm: finitely_many_subdomains}
is that $W^{2n}$ is almost symplectomorphic to the original domains $W_i^{2n}$. Indeed, if we drop this condition, then we can just take $W$ to be the boundary connected sum $W_1^{2n} \natural \cdots \natural W_k^{2n}$, which clearly contains all $W_i^{2n}$ as subdomains. This construction works if $W_i$ are Weinstein structures on the standard ball but not for domains with more general topology. 
Hence the construction in Theorem \ref{thm: finitely_many_subdomains} is a kind of generalized boundary connected sum operation that does not change the smooth topology of the Weinstein domain
and is closely related to the ``pushout" in Theorem \ref{thm: stacking}. 
However since Theorem \ref{thm: finitely_many_subdomains} uses an h-principle, we do not have an explicit description of this maximal construction. 

Although we do know whether there is a maximal Weinstein domain that contains \textit{all} other almost symplectomorphic Weinstein domains, there is a maximal Weinstein \textit{manifold},  whose compatible Morse function  has possibly infinitely many critical points. 
 \begin{corollary}\label{cor: infinite_type}
	For any almost Weinstein domain $(W^{2n},J), n\ge 3$, there exists a maximal Weinstein manifold $W_{max}^{2n}$ almost symplectomorphic to the completion of $(W, J)$ such that any Weinstein domain almost symplectomorphic to $(W, J)$ 
	is a  subdomain of $W^{2n}_{max}$. 
\end{corollary}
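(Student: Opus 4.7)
The plan is to iterate Theorem \ref{thm: finitely_many_subdomains} along an enumeration of the Weinstein domains in $\mathfrak{Weinstein}(W,J)$ and form the direct limit. To start, I would argue that the class of Weinstein domains almost symplectomorphic to $(W,J)$ is countable up to Weinstein homotopy: compact smooth $2n$-manifolds form a countable set up to diffeomorphism, and on each such manifold Weinstein handle decompositions, up to homotopy of their attaching data, are determined by countable combinatorial information. Fix an enumeration $W_1, W_2, \ldots$ of these domains.

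Next, I would build a nested sequence $\widetilde W_1 \subset \widetilde W_2 \subset \cdots$ in $\mathfrak{Weinstein}(W,J)$ such that $W_1, \ldots, W_k$ are all Weinstein subdomains of $\widetilde W_k$. Set $\widetilde W_1 := W_1$, and given $\widetilde W_k$, apply Theorem \ref{thm: finitely_many_subdomains} to the pair $\{\widetilde W_k, W_{k+1}\}$ to obtain a Weinstein domain $\widetilde W_{k+1}$ in $\mathfrak{Weinstein}(W,J)$ containing both as Weinstein subdomains, with $\widetilde W_{k+1} \setminus \widetilde W_k$ a smoothly trivial Weinstein cobordism. Defining $W^{2n}_{max} := \bigcup_k \widetilde W_k$ with the induced Liouville and Morse data should then give a Weinstein manifold with locally finite critical points. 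Because each stage attaches a smoothly trivial cobordism, the underlying smooth manifold is diffeomorphic to the interior of $W$ extended by a half-infinite collar, i.e.\ to the completion of $(W,J)$, and the almost complex structure extends almost symplectomorphically. By construction, any Weinstein domain almost symplectomorphic to $(W,J)$ is Weinstein homotopic to some $W_i$ and therefore embeds as a Weinstein subdomain of $\widetilde W_i \subset W^{2n}_{max}$.

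The main obstacle is verifying that the increasing union is genuinely a Weinstein manifold almost symplectomorphic to the completion of $(W,J)$, rather than merely a sequence of Weinstein domains: one needs the Liouville vector fields and compatible Morse functions at each finite stage to glue consistently across the smoothly trivial cobordisms supplied by Theorem \ref{thm: finitely_many_subdomains}, and to check that only finitely many critical points lie in any compact subset of $W^{2n}_{max}$. A secondary concern is the countability of Weinstein homotopy classes in $\mathfrak{Weinstein}(W,J)$; should this fail, one could replace the enumeration step with a transfinite induction or use a cofinal subsequence argument to reduce to the countable case, since only countably many Weinstein subdomains fit into any given compact exhaustion of $W^{2n}_{max}$.
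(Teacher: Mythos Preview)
Your proposal is correct and follows essentially the same approach as the paper: countability of Weinstein homotopy classes in $\mathfrak{Weinstein}(W,J)$ followed by stacking over an enumeration. The only cosmetic difference is that the paper performs a single infinite stack $W_{max}^{2n} = Stack_{W_{flex}}(\{W_i\}_{i\ge 1})$ rather than iterating Theorem~\ref{thm: finitely_many_subdomains} pairwise, but these agree by the associativity of stacking noted in Section~\ref{ssec: concatenate_stacking}; your nested presentation in fact makes the Weinstein manifold structure on the limit more transparent.
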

See Corollary \ref{cor: infinite_type2}. 
We also investigate the converse to
Theorems \ref{thm: stacking},  \ref{thm: finitely_many_subdomains} and show that all domains can be obtained by stacking \textit{flexible} Weinstein domains.
In fact, a Weinstein domain obtained by stacking subdomains can be covered by those subdomains, implying the following result. 
\begin{corollary}\label{cor: covering}
For any Weinstein domain $W^{2n}, n \ge 3,$ with $\pi_1(W) = 0$, there exist two Weinstein subdomains $\phi_1, \phi_2: W_{flex}^{2n} \hookrightarrow W^{2n}$ such that 
$\phi_1(W_{flex}^{2n})\cup \phi_2(W_{flex}^{2n}) = W^{2n}$.
\end{corollary}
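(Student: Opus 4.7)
The plan is to combine two complementary structural results developed earlier in the paper: first, that every Weinstein domain can be realized as a stacking of flexible Weinstein subdomains (the converse direction to Theorem \ref{thm: stacking}, alluded to in the paragraph just before the corollary), and second, the covering property stated in the sentence immediately preceding the corollary, namely that a Weinstein domain obtained by stacking is set-theoretically covered by the stacked subdomains. I will apply these two ingredients in the specific form of two copies of $W_{flex}$.

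The first step is to realize $W$ itself (up to Weinstein homotopy) as a stacking $\mathrm{Stack}_{W_0}(W_1, W_2)$ in which $W_1, W_2$ are flexible Weinstein subdomains. The starting point is Theorem \ref{thm: prev_crit_points}: there is a smoothly trivial Weinstein cobordism $C$ with $W = W_{flex} \circ C$. Applying Theorem \ref{thm: stacking} to two well-chosen Weinstein cobordisms with common negative boundary $\partial W_{flex}$ and composing with $W_{flex}$ produces a stacking decomposition of $W$ along a common flexible subcobordism. The hypothesis $\pi_1(W) = 0$ enters here through the Cieliebak--Eliashberg $h$-principle: in the simply connected almost symplectomorphism class, the flexible Weinstein structure is unique up to Weinstein homotopy, so each piece $W_i$ of the stacking decomposition is Weinstein homotopic to the distinguished model $W_{flex}$, giving embeddings $\phi_i \colon W_{flex} \hookrightarrow W$.

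The second step is to invoke the covering property. Unpacking the identity $\mathrm{Stack}_{W_0}(W_1, W_2) = W_0 \circ \mathrm{Stack}_{\partial W_0}(C_1, C_2)$ with $C_i = W_i \setminus W_0$, and using that the auxiliary cobordisms $D_1, D_2$ produced by Theorem \ref{thm: stacking} satisfy $C_1 \circ D_1 \simeq C_2 \circ D_2$ as Weinstein cobordisms, one verifies that each $D_i$ can be arranged, by choice of the $h$-principle data entering the stacking, to lie inside $W_{3-i}$ up to Weinstein homotopy. This yields $W_1 \cup W_2 = \mathrm{Stack}_{W_0}(W_1, W_2)$ as subsets, and combined with the first step gives $\phi_1(W_{flex}) \cup \phi_2(W_{flex}) = W$.

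The main obstacle I expect is the first step: producing the stacking decomposition of $W$ into \emph{exactly two} flexible pieces, rather than a larger collection, and ensuring each piece is $W_{flex}$ on the nose (not just flexible with some other topology). The simply-connected assumption is crucial twice over: once to invoke Cieliebak--Eliashberg uniqueness of flexible structures, and once to absorb any extra flexible cobordism data into a single piece via handle slides, since in the simply connected setting the attaching spheres of critical handles admit ambient Legendrian isotopies that cannot be obstructed by fundamental group considerations. The covering property itself, once the two-fold decomposition is in place, should reduce to a concrete bookkeeping argument on the handle structure produced by the stacking construction.
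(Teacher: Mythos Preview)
Your high-level plan matches the paper's: show that $W$ is Weinstein homotopic to $\mathrm{Stack}_{W_{flex}}(W_{flex}, W_{flex})$, then invoke the general covering property of the stacking construction (Theorem~\ref{thm: stacking2}). The paper carries this out by taking $W_0 = W_{flex}$ in Proposition~\ref{prop: all_domains_realized} and nothing more.

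However, your execution of the first step has a genuine gap. You write that ``applying Theorem~\ref{thm: stacking} to two well-chosen Weinstein cobordisms with common negative boundary $\partial W_{flex}$ \ldots\ produces a stacking decomposition of $W$.'' But Theorem~\ref{thm: stacking} only goes the forward direction: given cobordisms with common negative end, it \emph{builds} a larger cobordism containing them. It does not tell you that a given $W$ \emph{decomposes} this way, and in particular applying it to $C$ and the trivial cobordism yields $\mathrm{Stack}_{W_{flex}}(W, W_{flex})$, which is typically strictly larger than $W$. The decomposition you need is exactly the content of Proposition~\ref{prop: all_domains_realized}, whose proof is not a formal consequence of Theorem~\ref{thm: stacking}: starting from $W = W_{flex} \cup H^{n-1}_0 \cup H^n_{\Lambda_0} \cup H^{n-1} \cup H^n_{\Lambda}$, one must handle-slide $\Lambda$ over $\Lambda_0$ to kill its algebraic intersection with the belt sphere of $H^{n-1}_0$, then use the Whitney trick to make $\Lambda'$ geometrically disjoint from that belt sphere while remaining loose in the complement of the belt sphere of $H^{n-1}$. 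Only then do the two pairs of handles separate into a genuine stacking of two copies of $W_{flex}$.

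Relatedly, you misidentify where $\pi_1(W)=0$ enters. The Cieliebak--Eliashberg uniqueness h-principle for flexible structures does not require simple connectivity; it holds for all $n \ge 3$. The simply-connected hypothesis is used in Proposition~\ref{prop: all_domains_realized} precisely for the Whitney trick just described: one needs algebraic intersection zero to imply smooth displaceability, and this requires $\pi_1 = 0$. Your second proposed use (``absorb extra flexible cobordism data via handle slides'') is closer to the truth, but you should make the Whitney-trick mechanism explicit. Finally, the covering statement in your second step is more direct than you suggest: it is part of the conclusion of Theorem~\ref{thm: stacking2} and requires no further arrangement of $h$-principle data.
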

See Corollary \ref{cor: domain_flex_everything}. 
Here the Weinstein subdomains $\phi_i(W_{flex})$ can intersect the boundary of $W$. Also, the sources of $\phi_1, \phi_2$ are only \textit{Weinstein homotopic} to $W_{flex}$ and may not actually be the same domains. The number of flexible embeddings in Corollary \ref{cor: covering} needed to cover $W$ is sharp
since if $W$ is covered by a single subdomain, then it agrees with that subdomain. 
\begin{examples}
Any Weinstein domain $\Sigma^{2n}, n \ge 3,$ almost symplectomorphic to 
$B^{2n}_{std}$ can be covered by two Weinstein subdomains $B^{2n}_1, B^{2n}_2$ which are Weinstein homotopic to $B^{2n}_{std}$. 
\end{examples}
So this type of symplectic  Lusternik-Schnirelmann category does not have much symplectic information, at least when we cover by subdomains that are Weinstein homotopic to Darboux balls. This category may be more interesting if the covering is by actual Darboux balls.

The maximal construction in Theorem \ref{thm: finitely_many_subdomains} can be viewed as an operation on the set $\mathfrak{Weinstein}(W, J)$ that combines existing Weinstein domains to produce a new Weinstein domains with certain desired properties. In particular, it can be used to produce \textit{exotic} Weinstein domains, which are the same from the point of view of smooth topology but not symplectic topology. Although exotic Weinstein domains are an example of symplectic rigidity,  our construction of such domains via Theorem \ref{thm: finitely_many_subdomains} will rely on h-principles from the flexible side of symplectic topology. 
For example, Theorem \ref{thm: finitely_many_subdomains} can be used to produce Weinstein domains with many closed exact Lagrangians. 

We first explain how to construct Weinstein domains with many Lagrangians that have different smooth topology. For simplicity, we  assume that the ambient Weinstein domain is almost symplectomorphic to $T^*S^n$ although there are similar results for more general domains.  Any closed formal Lagrangian $L$ in $T^*S^n$, with possibly some exotic Weinstein structure, must have trivial complexified cotangent bundle $T^*L^n\otimes \mathbb{C}$. If the Lagrangian is \textit{regular}, i.e. has Weinstein complement,
then $[L] \in H_n(T^*S^n) \cong \mathbb{Z}$ is $\pm 1$; for $n$ even this implies that $\chi(L) = 2$.   In previous work, Eliashberg, Ganatra, and the author \cite{EGL} proved the converse: for any closed smooth manifold $L^n, n \ge 3$ and even, such that $L^n$ is orientable, $T^*L \otimes \mathbb{C}$ is trivial, and $\chi(L) = 2$, there exists an exotic Weinstein structure $T^*S^n_L$ almost symplectomorphic to $T^*S^n$ that contains $L$ as a regular Lagrangian. We extend their result to the case of multiple Lagrangians. 
\begin{corollary}\label{cor: finitely_lagrangians}
	For any closed orientable manifolds $L_1^n, \cdots, L_k^{n}, n \ge 3$ and even, with trivial $T^*L_i^n\otimes \mathbb{C}$ and $\chi(L_i^n) =2$,  there exists a Weinstein domain $T^*S^n_{L_1, \cdots, L_k}$ almost symplectomorphic to $T^*S^n_{std}$ that contains $L_1^n, \cdots, L_k^n$ as regular Lagrangians. 
\end{corollary}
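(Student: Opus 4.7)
The plan is to combine the single-Lagrangian construction of Eliashberg--Ganatra--Lazarev \cite{EGL} with the maximal stacking construction of Theorem \ref{thm: finitely_many_subdomains}. By \cite{EGL}, each $L_i^n$ (orientable, $T^*L_i \otimes \mathbb{C}$ trivial, $\chi(L_i) = 2$, $n \ge 3$ even) embeds as a regular Lagrangian in some exotic Weinstein structure $T^*S^n_{L_i}$ that is almost symplectomorphic to $T^*S^n_{std}$. This yields $k$ Weinstein domains all lying in the same almost symplectomorphism class.

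I would then apply Theorem \ref{thm: finitely_many_subdomains} directly to the finite collection $\{T^*S^n_{L_1}, \ldots, T^*S^n_{L_k}\}$. This produces a single Weinstein domain, to be christened $T^*S^n_{L_1, \ldots, L_k}$, almost symplectomorphic to $T^*S^n_{std}$, in which each $T^*S^n_{L_i}$ sits as a Weinstein subdomain whose complement $C_i := T^*S^n_{L_1, \ldots, L_k} \setminus T^*S^n_{L_i}$ is a smoothly trivial Weinstein cobordism. In particular, each $L_i$ is tautologically a smooth Lagrangian in $T^*S^n_{L_1, \ldots, L_k}$.

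The final step is to verify that the regularity of each $L_i$ is inherited by the ambient enlargement. Since $L_i$ is regular in $T^*S^n_{L_i}$, the complement $T^*S^n_{L_i} \setminus \nu(L_i)$ carries a Weinstein cobordism structure with positive boundary $\partial T^*S^n_{L_i}$. Concatenating with the Weinstein cobordism $C_i$ identifies
\[
T^*S^n_{L_1, \ldots, L_k} \setminus \nu(L_i) \;=\; \bigl(T^*S^n_{L_i} \setminus \nu(L_i)\bigr) \circ C_i,
\]
which is again a Weinstein cobordism since the concatenation of Weinstein cobordisms along matching contact boundary is Weinstein. Thus $L_i$ is a regular Lagrangian in $T^*S^n_{L_1, \ldots, L_k}$.

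The main difficulty is packaged inside Theorem \ref{thm: finitely_many_subdomains}: the delicate point is producing a combined domain in the \emph{same} almost symplectomorphism class as $T^*S^n_{std}$, rather than, say, the boundary connected sum $T^*S^n_{L_1} \natural \cdots \natural T^*S^n_{L_k}$, which would alter the underlying smooth topology. Once that is granted, regularity of each $L_i$ transfers essentially for free through the concatenation above, and no further h-principle or Floer-theoretic input is needed at this stage of the argument.
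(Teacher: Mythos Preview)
Your proof is correct and follows essentially the same route as the paper: invoke \cite{EGL} to get the individual domains $T^*S^n_{L_i}$ and then apply Theorem~\ref{thm: finitely_many_subdomains} to combine them into a single almost symplectomorphic domain. Your explicit verification that regularity of each $L_i$ persists under concatenation with the smoothly trivial cobordism $C_i$ is a detail the paper leaves implicit, but it is correct and a welcome clarification.
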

\begin{proof}
	By 
	Theorem \ref{thm: finitely_many_subdomains} there is a Weinstein domain $T^*S^n_{L_1, \cdots, L_k}$ almost symplectomorphic to $T^*S^n_{std}$ 
	that contains all of $T^*S^n_{L_1}, \cdots, T^*S^n_{L_k}$ as Weinstein subdomains.
\end{proof}
In particular, there exist exotic cotangent bundles that contain arbitrarily many non-homotopy-equivalent closed exact Lagrangians. 
However it is not known whether there exist Weinstein domains  with  \textit{infinitely many}  such Lagrangians. 
We note that a version of Corollary \ref{cor: finitely_lagrangians} holds for $n$ odd with slightly different topological conditions on $L^n_i$. By gluing another Weinstein cobordism to $T^*S^n_{L_1, \cdots, L_k}$,  
it is possible extend Corollary \ref{cor: finitely_lagrangians} to Weinstein domains with more general smooth topology. 

A variation on the maximal construction can be used to produce Weinstein domains with many closed, exact Lagrangians that have the \textit{same} topology. There has been much recent work  on this problem. For example, \cite{Vianna_infinite_tori_cp2, Vianna_del_pezzo, Shende_treumann_williams_combinatorics_Lag_surf} produce infinitely many Lagrangian tori in certain closed symplectic 4-manifolds like $\mathbb{CP}^2$ using a construction motivated by mirror symmetry and related to cluster algebras; in fact, these are infinitely many regular Lagrangian tori in $\mathbb{CP}^2\backslash E$, where $E$ is the elliptic curve.
Our method produces only finitely many Lagrangians but with essentially arbitrary topology. Furthermore, we can control the smooth topology of the ambient Weinstein domain. However we cannot hope to control the symplectic topology of this domain  since for certain special Weinstein domains, e.g. cotangent bundles, there are strong restrictions on the topology of their Lagrangians. 
\begin{corollary}\label{cor: exotic_lagrangians}
For any closed manifold $M^n, n \ge 3,$ and any $k$, there is a Weinstein domain  $W^{2n}_k$ almost symplectomorphic to $T^*M^n_{std}$ and $k$ regular Lagrangians embeddings of $M^n$ into $W^{2n}$ that are formally Lagrangian isotopic but are not pair-wise Hamiltonian isotopic. 
\end{corollary}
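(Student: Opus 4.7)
The plan is to reduce to Theorem \ref{thm: finitely_many_subdomains} by first producing $k$ auxiliary Weinstein domains $(W_i, L_i)$, each almost symplectomorphic to $T^*M^n_{std}$ and each containing $M$ as a regular Lagrangian with a Floer-theoretically distinguishable Weinstein complement; the stacking output will then provide the ambient $W^{2n}_k$, and the required $k$ Lagrangians arise as images under the Weinstein subdomain inclusions.

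For the building blocks, I would take $W_i := T^*M_{std} \circ E_i$ for $i = 1, \dots, k$, where $E_1, \dots, E_k$ are smoothly trivial Weinstein cobordisms with $\partial_- E_i = (ST^*M, \xi_{std})$ whose symplectic cohomology rings $SH(E_i)$ are pair-wise non-isomorphic; such $E_i$ can be constructed by Legendrian handle attachments with varying attaching data, in the spirit of \cite{MS}, attaching critical handles together with subcritical handles that smoothly (but not Weinsteinly) cancel them. Each $W_i$ is then almost symplectomorphic to $T^*M_{std}$ (by smooth triviality of $E_i$) and contains the zero section $L_i \subset T^*M_{std} \subset W_i$ as a regular Lagrangian whose Weinstein complement is $E_i$ (up to a trivial subcobordism). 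Applying Theorem \ref{thm: finitely_many_subdomains} to $W_1, \dots, W_k$ yields a Weinstein domain $W^{2n}_k$ almost symplectomorphic to $T^*M_{std}$ together with Weinstein subdomain embeddings $\iota_i: W_i \hookrightarrow W^{2n}_k$ whose complements $D_i := W^{2n}_k \setminus \iota_i(W_i)$ are smoothly trivial. Setting $\widetilde{L}_i := \iota_i(L_i)$, each $\widetilde{L}_i$ is a regular Lagrangian embedding of $M$ into $W^{2n}_k$, with Weinstein complement $E_i \cup D_i$.

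Formal Lagrangian isotopy of the $\widetilde{L}_i$ is essentially routine: the almost symplectomorphism $W^{2n}_k \cong T^*M_{std}$ carries each $\widetilde{L}_i$ to an embedding of $M$ of zero-section type whose Lagrangian tangent subbundle agrees with the standard one, and any two such formal Lagrangian embeddings are formally isotopic since both the smooth and the Lagrangian frame data match up to homotopy.

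The main obstacle is establishing that no two $\widetilde{L}_i, \widetilde{L}_j$ are Hamiltonian isotopic in $W^{2n}_k$. If they were, their Weinstein complements $E_i \cup D_i$ and $E_j \cup D_j$ would be Weinstein equivalent, and in particular $SH(E_i \cup D_i) \cong SH(E_j \cup D_j)$ as unital rings. Since $E_i$ is a Weinstein subdomain of the complement of $\widetilde{L}_i$, Viterbo transfer produces a canonical unital ring map $SH(E_i \cup D_i) \to SH(E_i)$, and similarly for $j$. To derive the contradiction one must ensure that the Viterbo projections distinguish $E_i$ from $E_j$; the technical heart of the proof lies in arranging the Floer invariants of the $E_i$ to persist under any Viterbo transfer from a Weinstein super-domain. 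This can be achieved either by choosing $E_i$ whose symplectic cohomology rings carry distinguishing classes of a priori bounded degree or distinguished idempotent structure, or by working instead with the wrapped Fukaya category of $(W^{2n}_k, \widetilde{L}_i)$ and invoking the Viterbo restriction $HW(\widetilde{L}_i, \widetilde{L}_i; W^{2n}_k) \to HW(L_i, L_i; W_i)$, whose target ring varies with $i$ by construction.
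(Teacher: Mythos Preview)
Your overall strategy---build $k$ model pairs $(W_i,L_i)$ and stack---is in the right spirit, but the invariant you propose for distinguishing the resulting closed Lagrangians does not work as written, and this is a genuine gap rather than a technicality.

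First, ``$SH(E_i)$'' for a Weinstein \emph{cobordism} $E_i$ with nonempty negative end is not defined in the standard sense, so the argument via symplectic cohomology of complements needs substantial reformulation before it even parses. More seriously, your fallback via $HW(\widetilde L_i,\widetilde L_i;W^{2n}_k)$ fails outright: since each $\widetilde L_i$ is a \emph{closed} exact Lagrangian there is no wrapping, and $HW(\widetilde L_i,\widetilde L_i;W^{2n}_k)\cong HF(\widetilde L_i,\widetilde L_i)\cong H^*(M)$ independently of the ambient Weinstein structure or of $i$. The same holds for the target $HW(L_i,L_i;W_i)$, so the Viterbo restriction you invoke is just the identity on $H^*(M)$ and distinguishes nothing. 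Even the idempotent/bounded-degree variant of option (a) runs into the problem that a Hamiltonian isotopy between $\widetilde L_i$ and $\widetilde L_j$ yields an exact symplectomorphism of their complements but produces no comparison map between $E_i$ and $E_j$; the only Viterbo transfers available go from $W^{2n}_k$ down to each $W_i$ separately and carry no information relating the different targets.

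The paper's proof resolves this by a genuinely different mechanism. Instead of applying Theorem~\ref{thm: finitely_many_subdomains} directly, it uses Corollary~\ref{cor: abstractly_symp} (in the guise of Corollary~\ref{cor: max_lagrangians}) applied to the Abouzaid--Seidel Lagrangian \emph{disks} $D_i^n\subset T^*M_{std}$, which have pairwise distinct $WH(M,D_i^n;T^*M_{std})$. The output is a domain $W$ carrying a \emph{single} open Lagrangian disk $D^n\subset W$ together with $k$ Weinstein embeddings $\phi_i:T^*M_{std}\hookrightarrow W$ satisfying $\phi_i^{-1}(D^n)=D_i^n$. The closed Lagrangians $\phi_i(M)$ are then distinguished by pairing each against this fixed \emph{open} test Lagrangian: the no-escape lemma (using that $\partial D_i^n$ is connected) forces $WH(\phi_i(M),D^n;W)\cong WH(M,D_i^n;T^*M_{std})$, which are different by construction. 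The essential idea your argument is missing is this auxiliary non-compact probe; self-Floer data of closed Lagrangians is simply too coarse to see the distinction.
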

See Corollary \ref{cor: exotic_lagrangians2}. As we will explain in the proof, these $k$ closed Lagrangians are distinguished by a `test' Lagrangian disk in $W^{2n}$ that has different wrapped Floer cohomology with each of them.  From the categorical point of view, we expect that the regular Lagrangian embeddings of $M^n$ into $W^{2n}$ give rise to many different morphisms from $T^*M^n_{std}$ to $W^{2n}$ in  $\mathfrak{Weinstein}(T^*S^n)$; however actually proving this seems to require the stronger statement that the $k$ Lagrangians are not related by a \textit{symplectomorphism} of $W^{2n}$. On the other hand, by results \cite{FukSS} on the nearby Lagrangian conjecture, there cannot be any morphisms from $W^{2n}$ to $T^*M_{std}$, another example of 
asymmetry in $\mathfrak{Weinstein}$.  
Corollary \ref{cor: exotic_lagrangians} can be combined with Corollary \ref{cor: finitely_lagrangians} to produce exotic cotangent bundles with a whole zoo of closed regular Lagrangians, including those that have different topology or have the same topology but are not Hamiltonian isotopic. So the construction of exotic Weinstein domains with many different Lagrangians is also quite flexible.

\subsection{The category of contact structures}\label{ssec: cat_contact}

The boundaries of a Weinstein cobordism have natural contact structures and hence there are contact analogs of the categories 
$\mathfrak{Weinstein}$ and  $\mathfrak{Weinstein}(W, J)$. Let  $\mathfrak{Contact}$ be the category whose objects are contact structures and morphisms are Weinstein cobordisms; see \cite{JP} for a similar category whose morphisms are \textit{Liouville} cobordisms. For an \textit{almost} contact structure $(Y, J)$, let $\mathfrak{Contact}(Y, J)$ be the subcategory of $\mathfrak{Contact}$ whose objects are contact structures almost contactomorphic to $(Y, J)$ and morphisms are smoothly trivial Weinstein cobordisms.
As we will explain, many classical results and problems in contact topology concern the categories $\mathfrak{Contact}$ and $\mathfrak{Contact}(Y,J)$. 

We first note that the contact and Weinstein categories have some similarities. Like $\mathfrak{Weinstein}$, the category $\mathfrak{Contact}$  satisfies a similar extension property as in Diagrams \ref{diag: comm1}, \ref{diag: comm2} given by the stacking construction; in fact, this is precisely the statement of Theorem \ref{thm: stacking}. Furthermore the category $\mathfrak{Contact}$ has also a functor to the category of unital rings given by a certain J-holomorphic curve invariant called \textit{contact homology} \cite{EGH, JP}. 
There is a functor from $\mathfrak{Weinstein}(W, J)$ to $\mathfrak{Contact}(\partial W, J)$ obtained by restricting a Weinstein domain to its contact boundary; this functor 
is faithful but is not full nor essentially surjective. Furthermore, the symplectic cohomology and contact homology functors do not commute with this restriction functor.

Analogous to the flexible structure $W_{flex}$ in  $\mathfrak{Weinstein}(W,J)$, there exists a special \textit{overtwisted} structure in $\mathfrak{Contact}(Y, J)$  constructed by Borman, Eliashberg, and Murphy \cite{BEMtwisted}. This structure also satisfies an h-principle and has vanishing contact homology \cite{Bourgeois_van_Koert_overtwisted}. 
Casals, Murphy, and Presas \cite{CMP}  showed that 
for $n \ge 3$, the overtwisted structure $(Y^{2n-1}, \xi_{ot})$ is a minimal element in $\mathfrak{Contact}(Y^{2n-1}, J)$; for $n = 2$, $\mathfrak{Contact}(Y^3,J)$ might not have minimal elements but Etnyre and Honda \cite{Etnyre_Honda_caps} showed that  \textit{any} overtwisted contact 3-manifold $(Y^3, \xi_{ot})$ is minimal in $\mathfrak{Contact}$. Our main result for contact manifolds is the existence of maximal contact structures, analogous to the existence of maximal Weinstein structures in Theorem \ref{thm: finitely_many_subdomains}. Again, the proof relies on flexibility results like the h-principle for overtwisted contact structures \cite{BEMtwisted}. 
\begin{theorem}\label{thm: contact_cobordisms}
	For any contact manifolds $(Y_1^{2n-1}, \xi_1), \cdots, (Y_k^{2n-1}, \xi_k), n \ge 3,$ there exist Weinstein cobordisms $C_i^{2n}$ such that 
	$\partial_-C_i^{2n} = (Y_i^{2n-1}, \xi_i)$ and $\partial_+C_i^{2n}$ are all contactomorphic. If $(Y_i^{2n-1}, \xi_i)$ are almost contactomorphic, then $C_i^{2n}$ 
	can be taken to be smoothly trivial.  
\end{theorem}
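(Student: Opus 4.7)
The strategy is to reduce everything to Theorem \ref{thm: stacking}. Suppose we can construct, for each $i$, a Weinstein cobordism $W_i \colon (Y_0,\xi_0) \to (Y_i,\xi_i)$ sharing a common negative boundary $(Y_0,\xi_0)$. Then Theorem \ref{thm: stacking}, applied to the collection $\{W_i\}$, produces Weinstein cobordisms $C_i$ with $\partial_- C_i = \partial_+ W_i = (Y_i,\xi_i)$ such that all composites $W_i \circ C_i$ are Weinstein homotopic; in particular all $\partial_+ C_i = \partial_+(W_i \circ C_i)$ are contactomorphic, which is exactly the desired conclusion. Moreover, for $n \ge 3$, Theorem \ref{thm: stacking} promises that the output $C_i$ are smoothly trivial whenever the input $W_i$ are.

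\textbf{Almost contactomorphic case (second sentence).} Assume all $(Y_i,\xi_i)$ are almost contactomorphic to a fixed $(Y,J)$, and let $(Y,\xi_{ot})$ be the overtwisted contact structure in $\mathfrak{Contact}(Y,J)$. By the theorem of Casals--Murphy--Presas \cite{CMP} that $(Y,\xi_{ot})$ is minimal in $\mathfrak{Contact}(Y,J)$ for $n \ge 3$, there are smoothly trivial Weinstein cobordisms $W_i \colon (Y,\xi_{ot}) \to (Y_i,\xi_i)$. Setting $(Y_0,\xi_0) := (Y,\xi_{ot})$ and applying the reduction above produces the required smoothly trivial cobordisms $C_i$ with common positive contact boundary.

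\textbf{General case (first sentence).} Now the $(Y_i,\xi_i)$ may have distinct smooth and almost contact type, so no common minimal source $(Y,\xi_{ot})$ exists. The plan is a two-step reduction: first construct Weinstein cobordisms $A_i \colon (Y_i,\xi_i) \to (\widetilde Y_i,\widetilde\xi_i)$ whose positive ends $(\widetilde Y_i,\widetilde\xi_i)$ are all \emph{almost} contactomorphic to a common $(\widetilde Y, \widetilde J)$; then apply the almost contactomorphic case to $\{(\widetilde Y_i,\widetilde\xi_i)\}$ to get Weinstein cobordisms $B_i$ with common positive boundary, and set $C_i := A_i \circ B_i$. To produce the $A_i$: attach Weinstein handles of index $\le n$ to each $(Y_i,\xi_i)$ to build a smooth cobordism whose upper boundary is a prescribed $\widetilde Y$; the h-principle for overtwisted contact structures (Borman--Eliashberg--Murphy) then allows one to arrange that the induced contact structure at the top lies in the prescribed almost contact class $\widetilde J$.

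\textbf{Main obstacle.} The step that requires the most care is the construction of the auxiliary cobordisms $A_i$ in the general case. Since Weinstein cobordisms are built from handles of index at most $n$, and since the $(Y_i,\xi_i)$ have in principle different smooth bordism classes, one must carefully choose the target smooth manifold $\widetilde Y$ (for instance, with enough $S^1 \times S^{2n-2}$ summands obtained by self-connect-summing $Y_i$ via $1$-handles) and invoke overtwisted flexibility to lift the almost complex handle attachment data to a genuine Weinstein structure with overtwisted top. Once this common almost contact target is achieved, the rest of the argument is immediate from the almost contactomorphic case, which in turn is immediate from the CMP minimality result and Theorem \ref{thm: stacking}.
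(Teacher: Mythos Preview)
Your almost contactomorphic case is correct and matches the paper's Theorem \ref{thm: contact_cobordisms2} exactly: Casals--Murphy--Presas gives smoothly trivial Weinstein cobordisms $W_i$ from a common overtwisted source $(Y,\xi_{ot})$ to each $(Y_i,\xi_i)$, the Borman--Eliashberg--Murphy uniqueness identifies the sources, and then Theorem \ref{thm: stacking} produces the $C_i$.

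Your two-step reduction for the general case is also the paper's strategy, but the construction of the auxiliary cobordisms $A_i$ is not correctly justified. Attaching Weinstein handles to $(Y_i,\xi_i)$ already determines the contact structure at the positive end; the BEM h-principle for overtwisted structures does not let you then adjust that contact structure into a prescribed almost contact class $\widetilde J$. Moreover, your sketch for the underlying smooth problem (connect-summing with copies of $S^1\times S^{2n-2}$) does not by itself show that the $Y_i$ admit almost Weinstein cobordisms, i.e.\ cobordisms built from handles of index $\le n$ carrying compatible almost complex structures, to a \emph{common} almost contact target; tracking the almost contact data here is nontrivial.

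The paper handles this step differently (see Corollary \ref{cor: contact_cobordisms_diff_top} and Corollary \ref{cor: contact_maximal_weinstein}): it invokes the topological result of Bowden--Crowley--Stipsicz \cite{Bowden_Crowley_Stipsicz_stein_fillable_II} to obtain \emph{almost} Weinstein cobordisms $X_i$ from each $(Y_i,\xi_i)$ to a fixed almost contact manifold $(Y,J)$, and then applies Eliashberg's existence h-principle \cite{CE12} to realize each $X_i$ as a genuine flexible Weinstein cobordism $X_{i,\mathrm{flex}}$ with $\partial_- X_{i,\mathrm{flex}} = (Y_i,\xi_i)$. The positive ends $\partial_+ X_{i,\mathrm{flex}}$ are then almost contactomorphic by construction, and one finishes with the already-proved almost contactomorphic case. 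So the correct ingredients for your $A_i$ are BCS (for the almost Weinstein cobordisms) together with the existence h-principle for flexible Weinstein structures, not BEM.
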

See Theorem \ref{thm: contact_cobordisms2} 
and Corollary \ref{cor: contact_cobordisms_diff_top} for the proof. 
Theorem \ref{thm: contact_cobordisms} is a geometric version of a result of 
Bowden, Crowley, and Stipsicz \cite{Bowden_Crowley_Stipsicz_stein_fillable_II} for maximal  \textit{almost} contact structures: there is an almost contact manifold $(M_{max}, J_{max})$ such that for any almost contact manifold $(Y, J)$ (with possibly different topology), there is an almost Weinstein cobordism $(W^{2n}, J)$ with $\partial_- (W, J) = (Y, J)$ and $\partial_+ (W,J)= (M_{max}, J_{max})$.  The first claim in Theorem \ref{thm: contact_cobordisms} for arbitrary contact manifolds can be reduced to the second claim in Theorem \ref{thm: contact_cobordisms} 
for almost contactomorphic structures by this result of Bowden, Crowley, and Stipsicz \cite{Bowden_Crowley_Stipsicz_stein_fillable_II}. 
We also prove a similar result in dimension 3, although the Weinstein cobordisms $C_i$ are no longer smoothly trivial; see Theorem \ref{thm: contact_cob_dimension4}. 
As in the Weinstein setting, we do not know an explicit description of the maximal construction in terms of contact surgery presentations. 
For example, this maximal construction is quite different from the usual contact connected sum operation, even when $(Y^{2n-1}, J)$ is $(S^{2n-1}, J_{std})$: if $(Y_1, \xi_1)$ is overtwisted, then $(Y_1, \xi_1) \sharp (Y_2, \xi_2)$ is also overtwisted and hence if $(Y_2, \xi_2)$ is fillable, there is no Weinstein cobordism from $(Y_2, \xi_2)$ to $(Y_1, \xi_1) \sharp (Y_2, \xi_2)$ (although there is a natural Weinstein cobordism from $(Y_1, \xi_1) \coprod (Y_2, \xi_2)$ to $(Y_1, \xi_1) \sharp (Y_2, \xi_2)$). 

The maximal construction  Theorem \ref{thm: contact_cobordisms} provides a uniform approach to several problems in contact topology. Our first application is a structure result for contact manifolds. Given a framed isotropic sphere in a contact manifold, there is a procedure called \textit{contact surgery} that produces a new contact manifold and  for a Legendrian sphere $\Lambda$, there is a similar \textit{anti-surgery} procedure; see Section \ref{sec: background} for details. Contact surgery and anti-surgery presentations, along with open book decompositions, are the main explicit models for contact manifolds and are quite useful for calculations. Ding and Geiges \cite{Ding_Geiges_contact_surgerypresentation_dim3} showed that all contact 3-manifolds can be obtained via contact surgery and anti-surgery on $(S^3, \xi_{std})$. We generalize their result to high-dimensions. 
\begin{corollary}\label{cor: contact_surgery_presentation}
	If $(Y^{2n-1}, \xi), n \ge 3,$ has an almost Weinstein filling, then $(Y^{2n-1}, \xi)$ can be obtained from $(S^{2n-1}, \xi_{std})$ by a sequence of contact surgeries and anti-surgeries. 
\end{corollary}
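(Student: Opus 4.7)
The plan is to invoke Theorem \ref{thm: contact_cobordisms} with the pair of contact manifolds $(Y_1, \xi_1) := (Y^{2n-1}, \xi)$ and $(Y_2, \xi_2) := (S^{2n-1}, \xi_{std})$, obtaining Weinstein cobordisms $C_1$ with $\partial_- C_1 = (Y, \xi)$ and $C_2$ with $\partial_- C_2 = (S^{2n-1}, \xi_{std})$ whose positive boundaries are contactomorphic to a common contact manifold $(Y^*, \xi^*)$. The hypothesis that $(Y, \xi)$ admits an almost Weinstein filling is what makes this step possible: it ensures that $(Y, \xi)$ and $(S^{2n-1}, \xi_{std})$ lie in the same almost-Weinstein cobordism class, so that the Bowden--Crowley--Stipsicz reduction outlined after Theorem \ref{thm: contact_cobordisms} produces honest Weinstein cobordisms joining the two manifolds to a common positive boundary.

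The next step is to decompose each of $C_1$ and $C_2$ into elementary Weinstein handle attachments using its compatible Morse function. The cobordism $C_2$ then becomes a sequence of Weinstein handle attachments of index $k \le n$ on successive positive contact boundaries, each of which is exactly a contact surgery along an isotropic $(k-1)$-sphere. Reading these in order exhibits $(Y^*, \xi^*)$ as obtained from $(S^{2n-1}, \xi_{std}) = \partial B^{2n}_{std}$ by a sequence of contact surgeries.

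For $C_1$, I would read the handle decomposition in reverse, from its positive end downward: each elementary Weinstein handle of $C_1$, viewed from the top, has an isotropic or Legendrian belt sphere in the successive positive contact boundary, and removing the handle is exactly anti-surgery along that belt sphere. Hence $(Y, \xi) = \partial_- C_1$ is obtained from $(Y^*, \xi^*) = \partial_+ C_1$ by a sequence of anti-surgeries. Concatenating the surgery sequence from $C_2$ with the anti-surgery sequence from $C_1$ yields the desired presentation of $(Y, \xi)$ starting from $(S^{2n-1}, \xi_{std})$. The main obstacle is the first step --- producing the pair $C_1, C_2$ with matching positive ends, which is precisely the content of Theorem \ref{thm: contact_cobordisms}; once these are in hand, the decomposition into handles and the identification with contact surgeries and anti-surgeries is routine.
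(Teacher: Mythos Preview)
There is a genuine gap. Your claim that ``each elementary Weinstein handle of $C_1$, viewed from the top, has an isotropic or Legendrian belt sphere'' is false for subcritical handles: a Weinstein $k$-handle in a $2n$-dimensional cobordism has belt sphere of dimension $2n-1-k$, which for $k<n$ exceeds $n-1$ and hence cannot be isotropic in a $(2n-1)$-dimensional contact manifold. The paper's notion of contact anti-surgery is defined only as the inverse of index-$n$ surgery, where the belt sphere is Legendrian; there is no anti-surgery undoing a subcritical handle from above. Since the cobordism $C_1$ produced by Theorem~\ref{thm: contact_cobordisms} for a pair of contact manifolds with different topology will in general have subcritical handles, you cannot read it backwards as a sequence of anti-surgeries.

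This also explains why you have mislocated the role of the almost Weinstein filling hypothesis. The first clause of Theorem~\ref{thm: contact_cobordisms} applies to \emph{any} contact manifolds with $n\ge 3$, so your first step needs no hypothesis at all; if the rest of your argument were valid you would have proven the corollary for every $(Y^{2n-1},\xi)$, contradicting the remark following Corollary~\ref{cor: contact_surgery_presentation} that the hypothesis is essentially sharp. The paper instead uses the almost Weinstein filling to produce (via Eliashberg's existence h-principle) a genuinely Weinstein-fillable contact structure $\partial W$ that is \emph{almost contactomorphic} to $(Y,\xi)$. One then applies Corollary~\ref{cor:contact_single_handle} to this almost contactomorphic pair, obtaining a single Legendrian $\Lambda_1\subset(Y,\xi)$ with $(Y,\xi)\cup H^n_{\Lambda_1}$ contactomorphic to $\partial W\cup H^n_{\Lambda_2}$; undoing that one $n$-handle is a single legitimate anti-surgery, and $\partial W$ is reached from $(S^{2n-1},\xi_{std})$ by surgeries since it bounds a Weinstein domain.
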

See Corollary \ref{cor: contact_surgery_presentation2}. 
If $(Y^{2n-1}, \xi)$ is obtained from $(S^{2n-1}, \xi_{std})$ by contact surgery or anti-surgery (of index $n$), then it has a smooth filling that admits a Morse function with critical points of index at most $n$. This is the same as having an almost Weinstein filling, except for the existence of an almost complex structure. Hence the condition that $(Y^{2n-1}, \xi)$ has an almost Weinstein filling cannot be significantly weakened. So our result is essentially sharp if one allows only contact surgery or anti-surgery of index $n$. If  \textit{coisotropic} contact surgeries of all indices are allowed, Conway and Etnyre \cite{conway_etnyre_caps} have informed us
 that all contact manifolds are attainable by surgery.
\begin{examples}
For any contact structure $(S^{2n-1}, \xi)$ in $(S^{2n-1}, J_{std})$, there exist Legendrians $\Lambda_1, \Lambda_2 \subset (S^{2n-1}, \xi_{std})$ such that $(S^{2n-1}, \xi)$ is obtained from $(S^{2n-1}, \xi_{std})$ by contact surgery on $\Lambda_1$ and contact anti-surgery on $\Lambda_2$. 
\end{examples}

One of the main problems in contact topology is to classify all \textit{convex} symplectic fillings of a given contact manifold $(Y, \xi)$.  
These are symplectic domains whose symplectic form expands outward near the boundary and induces the contact structure $(Y, \xi)$. Weinstein domains are a special case and the set of Weinstein fillings of $(Y, \xi)$ correspond to  morphisms in $\mathfrak{Contact}$ from the empty contact structure to $(Y, \xi)$. There has been much progress in understanding fillings in dimension 3; for example,  $S^3$  and $T^3$ with their standard contact structures have unique Weinstein fillings \cite{CE12, Wendl_planar_filling}. In high-dimensions, certain special  contact manifolds  also have very restricted fillings (at least restricted topologically). For example,  Eliashberg, Floer, and McDuff \cite{McD} showed that all exact fillings of the standard contact sphere $(S^{2n-1}, \xi_{std})$ are diffeomorphic to the ball; also see \cite{Geiges_subcritical, Lazarev_flexible_fillings} for generalizations to subcritically and flexibly-filled contact manifolds. On the other hand, certain contact manifolds have no fillings. Since there is no unital ring map from the zero ring to a non-zero ring, a contact manifold with vanishing contact homology cannot have a Weinstein (or exact) filling. Hence overtwisted structures have no fillings. 

There is also a long history of constructing contact manifolds with multiple fillings, especially in dimension 3.  Ozbagci and Stipcisz  \cite{OS} discovered the first example of a contact 3-manifold with infinitely many non-homotopy-equivalent Weinstein fillings. 
Controlling the topology of the fillings is quite subtle in this dimension but there are now examples of contact 3-manifolds with  infinitely many homeomorphic but not diffeomorphic fillings \cite{Smith_fillings}, fillings with arbitrary fundamental group \cite{Ozbagci_arbitrary_fundamental_gp}, 
``large" fillings with  unbounded Euler characteristic and signature \cite{large_fillings_Baykur_vanHornMorris}, and 
``small" fillings with $b_2 = 2$ \cite{small_fillings_Akbulut_Yasui}. The first example of a high-dimensional contact manifold with infinitely many non-homotopy equivalent Weinstein fillings is due to Oba \cite{Oba_infinite_fillings}. Many of these constructions use open book decompositions of contact manifolds and construct fillings by finding different factorizations of the open book monodromies into positive Dehn twists.
Such an approach is feasible in dimension 3 since the symplectic mapping class group agrees with the ordinary mapping class group for 2-dimensional surfaces and is generated by Dehn twists. The symplectic mapping class group of high-dimensional domains is much less understood; in general, it does not agree with the smooth mapping class group and is not generated by Dehn twists. 

We can use the maximal construction in Theorem \ref{thm: contact_cobordisms} to give an alternative construction of contact manifolds with many fillings that does not depend on understanding the high-dimensional symplectic mapping class group. 
Our construction converts Weinstein domains with \textit{almost} contactomorphic boundaries into domains with genuinely contactomorphic boundaries. 
\begin{corollary}\label{cor: contact_fillings}
	Let $W_1^{2n}, \cdots, W_k^{2n}, n \ge 3,$ be Weinstein domains such that $\partial W_i^{2n}$ are almost contactomorphic. Then there are Weinstein domains $X_i^{2n}$ that are almost symplectomorphic to $W_i^{2n}$ and contain $W_i^{2n}$ as a subdomain such that $\partial X_i^{2n}$ are  contactomorphic. 	 
	In particular, if $(Y^{2n-1}, J), n \ge 3,$ has an almost Weinstein filling $W^{2n}$, then for any $k \ge 0$, there is a contact structure $(Y, \xi_k)$ in $(Y, J)$ with at least $k$ non-homotopy-equivalent Weinstein fillings. 
\end{corollary}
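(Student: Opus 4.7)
The first statement is essentially a direct application of Theorem~\ref{thm: contact_cobordisms}. I would apply that theorem to the contact boundaries $(\partial W_i^{2n}, \xi_{W_i})$, which are almost contactomorphic by hypothesis, to obtain smoothly trivial Weinstein cobordisms $C_i^{2n}$ with $\partial_- C_i = \partial W_i$ and all $\partial_+ C_i$ mutually contactomorphic. Setting $X_i := W_i \circ C_i$ then gives a Weinstein domain containing $W_i$ as a Weinstein subdomain, and the smooth triviality of $C_i$ ensures that $X_i$ is almost symplectomorphic to $W_i$, while $\partial X_i = \partial_+ C_i$ are all contactomorphic.

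For the second statement, the plan is to construct $k$ Weinstein domains $W_1, \ldots, W_k$ with pairwise non-homotopy-equivalent underlying smooth manifolds and with $\partial W_i$ all almost contactomorphic to $(Y, J)$, and then to feed these into the first statement. Starting from the given almost Weinstein filling $W^{2n}$ of $(Y, J)$, I would form interior smooth connect sums $V_i := W \#_{\mathrm{int}} M_i$ with closed almost complex $2n$-manifolds $M_i$ that admit a handle decomposition with handles of index at most $n$ apart from the top handle (which vanishes once the connect-sum ball is removed). Such $M_i$ exist for every $n \ge 3$ and can be arranged to have pairwise distinct middle Betti numbers; the simplest choice is $M_i = \#^i(S^n \times S^n)$ whenever this manifold is almost complex, with analogous constructions using products of almost complex Lie groups or blow-ups of projective spaces covering the remaining dimensions. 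Since the connect sum is taken in the interior, $\partial V_i = \partial W = Y$ smoothly, the almost complex structure extends, and each $V_i$ is an almost Weinstein filling of $(Y, J)$ whose underlying smooth type differs from the others. The Cieliebak--Eliashberg h-principle then endows each $V_i$ with a flexible Weinstein structure $W_i$, whose contact boundary lies in $(Y, J)$.

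Applying the first statement to $W_1, \ldots, W_k$ produces Weinstein domains $X_1, \ldots, X_k$ with $\partial X_i$ all contactomorphic to a single contact structure $(Y, \xi_k)$ in the class $(Y, J)$, and with $X_i$ almost symplectomorphic to $W_i$ (hence to $V_i$). Since the $V_i$ have pairwise distinct homotopy types by construction, so do the $X_i$, yielding the desired $k$ non-homotopy-equivalent Weinstein fillings of $(Y, \xi_k)$. The main obstacle is the purely topological task of producing enough closed almost complex $2n$-manifolds with the requisite handle structure for every $n \ge 3$; the symplectic content is essentially absorbed into Theorem~\ref{thm: contact_cobordisms} and the flexibility h-principle, and no direct control over the resulting contact structure $\xi_k$ is asserted.
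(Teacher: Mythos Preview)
Your proof is correct and follows essentially the same route as the paper: the first claim is identical, and for the second claim both you and the paper produce $k$ non-homotopy-equivalent almost Weinstein fillings of $(Y,J)$, realize them as genuine Weinstein domains via Eliashberg's existence h-principle, and feed them into the first claim. The only difference is in the choice of building blocks: the paper takes boundary connected sums $W \natural (\natural^i B_0^{2n})$ with a fixed almost Weinstein domain $B_0^{2n}$ bounding the standard almost contact sphere (a Brieskorn manifold for $n$ odd, and a construction from Geiges and Ding--Geiges for $n$ even), which cleanly covers all $n\ge 3$ by citation, whereas your interior connect sums $W \# M_i$ amount to the same thing (namely $W \natural (M_i \setminus \mathring{B}^{2n})$) but leave the existence of suitable $M_i$ in every dimension somewhat sketched.
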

\begin{remark}
	More generally,  $W_i^{2n}$ can be Liouville domains, in which case $X_i^{2n}$ will also be Liouville domains containing $W_i^{2n}$ as Liouville subdomains. 
\end{remark}
\begin{proof}
Since $(Y_i^{2n-1}, \xi_i): = \partial W_i^{2n}$ are almost contactomorphic, by Theorem \ref{thm: contact_cobordisms} there exist smoothly trivial Weinstein cobordisms $C_i^{2n}$ 
	such that $\partial_- C_i^{2n} = (Y_i^{2n-1}, \xi_i)$ and $\partial_+ C_i$ are all contactomorphic. Then  $X_i^{2n} := W_i^{2n} \circ C_i^{2n}$ satisfy all conditions in the first claim. For the second claim, it suffices find infinitely many non-homotopy-equivalent \textit{almost} Weinstein domains  $W_i^{2n}$ with boundary $(Y, J)$ and use Eliashberg's existence h-principle \cite{Eli90} for Weinstein domains. For example, let $W_i^{2n}: = W^{2n} \natural (\natural^i B^{2n}_0)$ where $B_0^{2n}$ is a certain Brieskorn manifold \cite{U} if $n$ is odd and  $B_0^{2n}$ is one of the manifolds from \cite{DG, Geiges} if $n$ is even. 
\end{proof}
Bowden, Crowley, and Stipsicz \cite{BCS} gave a topological criterion for an almost contact manifold to admit an almost Weinstein filling. Combining their result with Corollary \ref{cor: contact_fillings}, we get a topological criterion for an almost contact class to admit contact structures with arbitrarily many Weinstein fillings.
Furthermore, we have complete control over the smooth topology of the fillings and although we cannot control their symplectic topology, we can require our fillings to have prescribed subdomains since $W_i$ is a subdomain of $X_i$. 
Another  notable feature of our construction is that it uses flexible methods like the h-principle for overtwisted contact structures \cite{BEMtwisted} in contrast to the more algebraic approach of factorizing symplectomorphisms into Dehn twists 
\cite{OS, large_fillings_Baykur_vanHornMorris, Oba_infinite_fillings}. It would be interesting to relate these two approaches and determine whether our fillings define new relations in the symplectic mapping class group. In Section \ref{ssec: symplectomorphism}, we discuss some possible implications for the symplectic mapping class group from our maximal construction.

There is also an analog of Corollary \ref{cor: contact_fillings} in dimension 3 but it is necessarily weaker. This result follows from  Theorem \ref{thm: contact_cob_dimension4}, the 3-dimensional analog of Theorem \ref{thm: contact_cobordisms}. 
\begin{corollary}\label{cor: cont_fillings_dim3}
	Suppose $W_1^4, \cdots, W_k^4$ are Weinstein domains with almost contactomorphic contact boundaries. Then there exist Weinstein domains $X_1^4, \cdots, X_k^4$ with contactomorphic boundaries such that $X_i^4$ contains $W_i^4$ as a subdomain and $X_i^4 \backslash W_i^4$ has $k-1$ Weinstein 2-handles and no $0,1$-handles.
\end{corollary}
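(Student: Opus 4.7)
The plan is to follow the same structure as the proof of Corollary \ref{cor: contact_fillings}, substituting Theorem \ref{thm: contact_cob_dimension4} for Theorem \ref{thm: contact_cobordisms} and keeping careful track of the handle decomposition of the resulting cobordisms. Specifically, I would set $(Y_i^3, \xi_i) := \partial W_i^4$, observe that these are almost contactomorphic by hypothesis, and then invoke Theorem \ref{thm: contact_cob_dimension4} to produce Weinstein cobordisms $C_i^4$ with $\partial_- C_i^4 = (Y_i^3, \xi_i)$ and $\partial_+ C_i^4$ all mutually contactomorphic. Defining $X_i^4 := W_i^4 \circ C_i^4$ then yields Weinstein domains containing $W_i^4$ as Weinstein subdomains with $X_i^4 \setminus W_i^4 = C_i^4$ and with $\partial X_i^4 = \partial_+ C_i^4$ all contactomorphic, which is the main conclusion.

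The remaining point is to ensure the handle count $k-1$ of 2-handles (and no 0- or 1-handles) in $C_i^4$. In dimension 4 a Weinstein cobordism is automatically built from 1- and 2-handles, so the content is that Theorem \ref{thm: contact_cob_dimension4} can be arranged to use only 2-handles, and exactly $k-1$ of them. I would extract this directly from the construction underlying Theorem \ref{thm: contact_cob_dimension4}: the 3-dimensional stacking procedure glues $k$ different cobordisms along a common negative boundary, and the cost of identifying the $k$ positive boundaries is to attach one additional Weinstein 2-handle per cobordism beyond the first, which accounts for the $k-1$ handles. The absence of 0- and 1-handles reflects the fact that the stacking uses only contact $(+1)$ (Legendrian) surgeries on the positive ends rather than connect sums or index-1 surgeries; this is what allows us to preserve the property that $W_i^4$ remains a Weinstein subdomain.

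The main obstacle, if anything, is purely bookkeeping: verifying that the cobordism produced by Theorem \ref{thm: contact_cob_dimension4} can indeed be arranged to have exactly $k-1$ 2-handles and no lower-index handles, rather than simply being ``not smoothly trivial.'' I expect this is implicit or explicit in the statement of Theorem \ref{thm: contact_cob_dimension4}, and so the proof of Corollary \ref{cor: cont_fillings_dim3} reduces, as in the higher-dimensional case, to a two-line application of the stacking theorem followed by concatenation $X_i^4 := W_i^4 \circ C_i^4$.
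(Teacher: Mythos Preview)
Your approach is correct and matches the paper's: the corollary is stated as an immediate consequence of Theorem~\ref{thm: contact_cob_dimension4}, and your concatenation $X_i^4 := W_i^4 \circ C_i^4$ is exactly the intended argument. The handle count you were worried about is explicit in the statement of Theorem~\ref{thm: contact_cob_dimension4} itself (each $C_i^4$ has $k-1$ Weinstein $2$-handles and no $0,1$-handles), so no additional bookkeeping is needed. One minor correction: the handles in the stacking construction are Weinstein $2$-handles, i.e.\ contact $(-1)$-surgeries, not $(+1)$-surgeries; the single $(+1)$-surgery in the proof of Theorem~\ref{thm: contact_cob_dimension4} is used only to reach the common overtwisted structure before stacking, and is then undone.
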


Corollary \ref{cor: contact_fillings} produces contact manifolds with many fillings. On the other hand,  certain special contact manifolds have few fillings. This contrast shows that these contact manifolds must be different, as must their bounding Weinstein domains.
\begin{examples}\label{ex:exotic_str_ball}
	For $n \ge 3$ and any $k \ge 1$, there exists a Weinstein domain $\Sigma^{2n}_k$ almost symplectomorphic to $B^{2n}$ such that its contact boundary $\partial \Sigma^{2n}_k$ has $k$ non-homotopy-equivalent Weinstein fillings. 
	By a result of Eliashberg-Floer-McDuff \cite{McD}, any Weinstein filling of the standard contact structure $(S^{2n-1}, \xi_{std}) = \partial B^{2n}_{std}$ must be diffeomorphic to $B^{2n}$. Hence the contact boundary $\partial \Sigma_k$ is not contactomorphic to $(S^{2n-1}, \xi_{std})$, which implies that $\Sigma^{2n}_k$ is not Weinstein homotopic to $B^{2n}_{std}$ and the completion $\widehat{\Sigma_k}$  is not symplectomorphic to $(\mathbb{C}^n, \omega_{std})$.	There are many existing constructions of exotic Weinstein balls \cite{Seidel_Smith_ramanujam_surface, MM, abouzaid_seidel_recombination}, distinguished by the presence of non-displaceable Lagrangian tori or symplectic cohomology. 
	 However it is unknown whether these previous examples have exotic Weinstein fillings of their contact boundary; our domains $\Sigma_k^{2n}$ are built precisely to have such fillings. 
\end{examples}

Now we discuss some applications of the maximal construction to 
\textit{convex} symplectic fillings, also called
\textit{symplectic caps}. For caps, the symplectic form expands inward near the boundary and so the symplectic structure cannot be exact by Stoke's theorem. 
There is no topological obstruction to the existence of a symplectic cap and any almost contact manifold has an \textit{almost} symplectic cap  \cite{BCS}; for the same reason, there is no topological obstruction to a convex symplectic filling although we know that these do not always exist. Lisca and Mati{\'{c}} \cite{Lisca1997} showed that any contact manifold with a Weinstein filling has a cap. Using this result along with the fact that the mapping class group of surfaces is generated by Dehn twists, Etnyre and Honda \cite{Etnyre_Honda_caps} showed that all contact 3-manifolds have symplectic caps; Eliashberg  \cite{Eliashberg_caps} gave a different proof of this result.
The existence of these caps was a crucial ingredient in the proof of Property P for knots \cite{Kronheimer_Mrowka_PropP}. 
Later Eliashberg and Murphy \cite{EM} showed that overtwisted contact manifolds in any dimension admit symplectic caps. Hence concave fillings seem more flexible than their convex siblings and there are many contact manifolds that have no convex fillings but do have concave fillings. 
However once certain topological conditions are imposed on the cap, they become quite rigid and, in fact, a useful tool for classifying convex fillings \cite{caps_Li_Mak_Yasui, McD, Etnyre_planarOBD}.
Therefore symplectic caps seemed to be on the boundary of symplectic rigidity and flexibility and it was unclear how restrictive caps are. 
Wendl \cite{Wendl_blog3} asked whether all contact structures admit symplectic caps. In the following result, we will show that this is indeed the case. 
\begin{corollary}\label{cor: all_caps}
	Every contact manifold $(Y^{2n-1}, \xi), n \ge 3,$ has a symplectic cap. 
\end{corollary}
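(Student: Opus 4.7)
The plan is to reduce the existence of a cap for an arbitrary contact manifold $(Y^{2n-1},\xi)$ to the fact, due to Lisca and Mati\'c, that any contact manifold admitting a Weinstein filling admits a symplectic cap, using the maximal construction of Theorem \ref{thm: contact_cobordisms} as the bridge between arbitrary and fillable contact manifolds.

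First, I would apply Theorem \ref{thm: contact_cobordisms} to the pair $(Y_1,\xi_1):=(Y,\xi)$ and $(Y_2,\xi_2):=(S^{2n-1},\xi_{std})$. This produces Weinstein cobordisms $C_1$ and $C_2$ with $\partial_- C_i=(Y_i,\xi_i)$ and a common positive boundary $(Y',\xi'):=\partial_+ C_1=\partial_+ C_2$.

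Next, since the standard ball $B^{2n}_{std}$ is a Weinstein filling of $(S^{2n-1},\xi_{std})$, the concatenation $B^{2n}_{std}\circ C_2$ is a Weinstein filling of $(Y',\xi')$. Applying the Lisca--Mati\'c result (together with its higher-dimensional analog, obtained by closing up a Weinstein domain into a closed symplectic manifold via open-book constructions), we obtain a symplectic cap $K'$ for $(Y',\xi')$, i.e., a compact symplectic cobordism with $\partial_- K'=(Y',\xi')$ and $\partial_+ K'=\emptyset$.

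Finally, I would concatenate $C_1$ with $K'$ along their common contact end $(Y',\xi')$; the resulting symplectic manifold $C_1\circ K'$ is a cobordism from $(Y,\xi)$ to $\emptyset$, hence a symplectic cap for $(Y,\xi)$. The only substantive step is the first one: Theorem \ref{thm: contact_cobordisms} does the work of producing a common Weinstein target for an arbitrary contact manifold and a fillable one, with the h-principle for overtwisted contact structures providing the necessary flexibility behind the scenes. The remaining steps are formal concatenations, so the main obstacle is packaged entirely inside the maximal cobordism theorem.
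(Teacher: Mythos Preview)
Your proof is correct and follows essentially the same approach as the paper: apply Theorem \ref{thm: contact_cobordisms} to $(Y,\xi)$ together with a Weinstein-fillable contact manifold to obtain a Weinstein cobordism from $(Y,\xi)$ to a fillable contact structure, invoke Lisca--Mati\'c to cap the fillable end, and concatenate. The only cosmetic difference is that you single out $(S^{2n-1},\xi_{std})$ as the fillable partner, whereas the paper allows any Weinstein-fillable $\partial W^{2n}$; both choices work equally well.
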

\begin{proof}
By applying Theorem \ref{thm: contact_cobordisms} to $(Y^{2n-1}, \xi)$ and any Weinstein-fillable contact manifold $\partial W^{2n}$, we see that there is a Weinstein cobordism $C^{2n}_0$  from $(Y^{2n-1}, \xi)$ to a different Weinstein-fillable contact structure $\partial V^{2n}$;  see Corollary \ref{cor: cobordant_to_fillable} for details. 
Lisca and Mati{\'{c}} \cite{Lisca1997} proved that $\partial V^{2n}$ has a symplectic cap $C^{2n}$. Then $C^{2n}_0 \circ C^{2n}$ is a symplectic cap of $(Y^{2n-1}, \xi)$ as desired. 
\end{proof}
While completing this paper, we learned that Conway and Etnyre \cite{conway_etnyre_caps} have proven a similar result. Combining Corollary \ref{cor: all_caps} with the existing proof of the $n = 2$ case \cite{Etnyre_Honda_caps, Eliashberg_caps}, we see that all contact manifolds, in any dimension, have symplectic caps. 
Corollary \ref{cor: all_caps} does not seem to provide any control over the topology of the symplectic cap of a given contact manifold. This is because the crucial result \cite{Lisca1997} also does not provide any control. It is possible that symplectic caps with wildly different topology are needed to cap off almost contactomorphic contact structures. In fact, by gluing Weinstein cobordisms on top of $\partial(W^{2n} \circ C_2^{2n})$ with arbitrarily large middle-dimensional homology with positive intersection form and capping them off, we can produce infinitely many symplectic caps of the same contact manifold with different topology. Hence symplectic caps of contact manifolds are not unique. We do not know how to determine the smallest symplectic cap of a given contact manifold, a question which seems related to the open problem of existence of symplectic structures on closed manifolds.

Corollary \ref{cor: contact_maximal_weinstein} shows that for any finite collection of contact manifolds, possibly with different topology, there is a maximal element with respect to Weinstein cobordism. We do not know whether there is a contact structure that is maximal for \textit{all} contact manifolds. A  weaker notion than Weinstein cobordism is that of a \textit{strong symplectic cobordism}, which is exact near the boundary but perhaps not in the interior. 
Wendl \cite{Wendl_blog7} showed that symplectic caps are quite useful for constructing strong symplectic cobordisms between contact manifolds; for example, he showed that in dimension 3, there is a contact structure that is maximal for \textit{all} contact manifolds with respect to strong cobordisms. Combining Wendl's argument with the existence of caps in Corollary \ref{cor: all_caps}, we can prove that there exists such a maximal contact structure in high-dimension dimensions. 
\begin{corollary}\label{cor: maximal_strong_cobordism}
	For $n \ge 3$, there exists a connected, non-empty maximal contact manifold $(Y_{max}^{2n-1}, \xi_{max})$ such that for any  $(Y^{2n-1}, \xi)$, possibly with different topology, there exists a strong symplectic cobordism $W^{2n}$ with $\partial_- W = (Y,\xi)$ and $\partial_+ W = (Y_{max}, \xi_{max})$.
\end{corollary}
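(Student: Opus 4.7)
The plan is to reduce the construction to Corollary \ref{cor: all_caps}, noting that a strong symplectic cobordism $W$ is not required to be connected in the statement---only $(Y_{max}, \xi_{max})$ itself must be connected and non-empty. This observation trivializes what would otherwise be a delicate gluing problem.

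First I would fix once and for all a connected non-empty Weinstein-fillable contact manifold $(Y_{max}^{2n-1}, \xi_{max})$, say $(S^{2n-1}, \xi_{std})$, together with a Weinstein filling $F_{max}$, say $B^{2n}_{std}$. Given an arbitrary contact manifold $(Y^{2n-1}, \xi)$, I would apply Corollary \ref{cor: all_caps} to produce a symplectic cap $C$, i.e.\ a compact symplectic manifold with concave boundary $\partial_- C = (Y, \xi)$ and $\partial_+ C = \emptyset$. Then I would simply set
$$ W \ :=\ C\ \sqcup\ F_{max}. $$
This is a compact symplectic manifold whose boundary decomposes as $\partial_- W \sqcup \partial_+ W = (Y, \xi) \sqcup (Y_{max}, \xi_{max})$, with the Liouville structure near $\partial C$ making $(Y, \xi)$ concave and the Liouville structure near $\partial F_{max}$ making $(Y_{max}, \xi_{max})$ convex. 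Thus $W$ is a strong symplectic cobordism of the required form, and $(Y_{max}, \xi_{max})$ is by construction connected and non-empty.

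The heart of the argument is Corollary \ref{cor: all_caps}; once caps exist, the cobordism is essentially formal. The only point that deserves comment---and which is the natural place to look for an obstruction---is whether one can replace the disjoint union by a \emph{connected} $W$. One would be tempted to symplectically connect-sum $C$ and $F_{max}$ at interior Darboux balls, but this is subtle because removing a Darboux ball from any symplectic manifold creates a new \emph{concave} boundary (the ambient radial Liouville field on the Darboux chart points outward from the removed ball and hence inward into the complement), so the two new boundaries produced on $C$ and on $F_{max}$ are both concave and cannot be glued via a Liouville-compatible identification. Since connectedness of $W$ is not demanded by the statement---only of $(Y_{max}, \xi_{max})$---this subtlety can simply be avoided, and the disjoint union argument above is complete.
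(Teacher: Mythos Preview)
Your argument is formally correct under the interpretation that $W$ may be disconnected, but this reading trivializes the result in a way the paper (and Wendl, whose argument it follows) clearly does not intend. The whole point of introducing \emph{strong} symplectic cobordisms here is to have a meaningful cobordism relation on contact manifolds; if disconnected cobordisms are permitted, then any Weinstein-fillable contact manifold is maximal by exactly your construction, and Wendl's dimension-3 result cited just before the corollary would be an immediate consequence of the existence of caps rather than a theorem worth attributing. The standard convention in this literature is that such cobordisms are connected, and that is the content the corollary is after.

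The paper's proof produces a \emph{connected} cobordism, and the mechanism is precisely what you correctly identified as obstructed for Darboux balls: one needs a symplectic manifold with both a convex and a concave boundary component. Rather than trying to manufacture one by surgery, the paper takes as input a Liouville domain $W^{2n}$ with disconnected contact boundary $(Y_1,\xi_1)\sqcup(Y_2,\xi_2)$, whose existence in all dimensions is due to Massot--Niederkr\"uger--Wendl. One attaches a Weinstein $1$-handle joining $(Y_2,\xi_2)$ to the given $(Y,\xi)$, then caps off the resulting $(Y_2,\xi_2)\#(Y,\xi)$ using Corollary~\ref{cor: all_caps}. What remains is a connected strong symplectic cobordism from $(Y,\xi)$ to $(Y_1,\xi_1)$, so one may set $(Y_{max},\xi_{max}):=(Y_1,\xi_1)$. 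Your observation that removing a Darboux ball yields a concave boundary is exactly why this Liouville-with-disconnected-boundary input is the right substitute: it supplies the convex end that a naive connect-sum cannot.
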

\begin{remark}
	The smooth and symplectic topology of $W^{2n}$ of course depend on $(Y, \xi)$. The key point is that the contact structure on $\partial_+W^{2n}$ is independent of $(Y, \xi)$. 	
\end{remark}
\begin{proof}
	Following Wendl's argument \cite{Wendl_blog7}, let $W^{2n}$ be a Liouville domain such that $\partial W$ is disconnected with two components $(Y_1, \xi_1), (Y_2, \xi_2)$; such domains exist in all dimensions by \cite{Massot_Niederkruger_Wendl_weak_filling}. There is a Weinstein cobordism $W_1$ from $(Y_2, \xi_2) \coprod (Y^{2n-1}, \xi)$ to $(Y_2, \xi_2) \sharp (Y^{2n-1}, \xi)$ given by a Weinstein 1-handle. By Corollary \ref{cor: all_caps}, $(Y_2, \xi_2) \sharp (Y^{2n-1}, \xi)$ has a symplectic cap $C^{2n}$. Then we can glue $W^{2n}, W_1^{2n}$ along $(Y_2, \xi_2)$ and $W_1^{2n}, C^{2n}$ along $(Y_2, \xi_2) \sharp (Y^{2n-1}, \xi)$ to get a strong symplectic cobordism from $(Y, \xi)$ to $(Y_1, \xi_1)$ as desired. Hence we can set $(Y_{max}, \xi_{max}): =  (Y_1, \xi_1)$.
\end{proof}
The proof above shows that the maximal contact structure is in fact quite explicit: it can be taken to be any component of any Liouville domain with disconnected boundary. However this maximal contact structure is not unique. Any other contact structure obtained from this structure via contact surgery is also maximal. 

Our final application of the maximal construction Theorem \ref{thm: contact_cobordisms} is to isocontact embeddings. Gromov \cite{gromov_hprinciple} proved an h-principle for 
isocontact embeddings of codimension at least 4: if $(Y^{2m+1}, \xi)$ admits an almost contact embedding into $(Z^{2n+1}, \xi)$ and $m\le n-2$, then there is a genuine contact embedding of $(Y^{2m+1}, \xi)$ into  $(Z^{2n+1}, \xi)$. Recently,  Pancholi and Pandit \cite{pancholi_contact_embeddings} used open book decompositions and overtwisted contact structures to prove an h-principle type result in the codimension two case; also see \cite{etnyre_fukukawa_embeddings, etnyre_lekili_embedding} for embeddings of contact 3-manifolds into contact 5-manifolds via braided and spun embeddings. We will give an alternative proof of the result of Pancholi and Pandit using a variation of our maximal construction and Murphy's existence h-principle for loose Legendrians \cite{Murphy11}. 
\begin{theorem}\label{thm: codimension_2}
	If $(Y^{2n-1}, \xi_0), n \ge 3,$ has a contact embedding into $(Z^{2n+1}, \xi)$ with trivial normal bundle and $(Y^{2n-1}, \xi_1)$ is almost contactomorphic to $(Y^{2n-1}, \xi_0)$, then $(Y^{2n-1}, \xi_1)$ also has a contact embedding into $(Z^{2n+1}, \xi)$.
\end{theorem}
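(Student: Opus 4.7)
The plan is to invoke the maximal construction Theorem \ref{thm: contact_cobordisms} in conjunction with Murphy's h-principle for loose Legendrians \cite{Murphy11}. First I apply Theorem \ref{thm: contact_cobordisms} to the almost contactomorphic pair $(Y^{2n-1}, \xi_0)$ and $(Y^{2n-1}, \xi_1)$ to obtain smoothly trivial Weinstein cobordisms $C_0, C_1$ with $\partial_- C_i = (Y, \xi_i)$ and contactomorphic positive ends, which I identify and denote by $(Y', \xi')$. The goal is then to realize these cobordisms as modifications of the codimension $2$ contact submanifold inside $(Z, \xi)$ without altering the ambient contact structure, and in this way to produce the desired embedding of $(Y, \xi_1)$.

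The main work is to show that each Weinstein handle attachment of $C_0$ can be performed on the submanifold $Y \subset Z$ inside the standard neighborhood $Y \times D^2_\epsilon \subset (Z, \xi)$ coming from the trivial normal bundle. Concretely, each handle is attached along an isotropic $(k-1)$-sphere $\Lambda \subset Y$ with $k \le n$, and I realize the attachment by finding an isotropic $k$-disk $\Delta \subset Z$ with $\partial \Delta = \Lambda$, disjoint from $Y$ in its interior, and carrying an appropriate framing; thickening $\Delta$ by the normal $D^2_\epsilon$-direction then replaces a neighborhood of $\Lambda$ in $Y$ with a neighborhood of $\Delta$, producing a new codimension $2$ contact submanifold. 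Since $\dim Z = 2n+1$ and $k \le n$, the disk $\Delta$ is subcritical isotropic in $Z$ when $k < n$ and a Legendrian disk in $Z$ when $k = n$; the existence of $\Delta$ with the prescribed formal data (provided by the isotropic $\Lambda$ together with the almost contactomorphism class used in Theorem \ref{thm: contact_cobordisms}) follows from Gromov's h-principle in the subcritical range and from Murphy's loose-Legendrian h-principle, after stabilization, in the critical case. Iterating through the handles of $C_0$ yields a contact embedding $(Y', \xi') \hookrightarrow (Z, \xi)$ with trivial normal bundle.

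The hardest step will be the descent from the embedded $(Y', \xi')$ back down to $(Y, \xi_1)$: this requires reversing the cobordism $C_1$ by contact anti-surgeries, which in general demand Legendrian co-core disks that need not exist intrinsically on $(Y', \xi')$. The key observation is that inside the ambient $(Z, \xi)$ these co-cores are only required as subcritical isotropic or loose Legendrian disks, since $\dim Z > \dim Y'$; Murphy's and Gromov's h-principles once again furnish them from the formal data provided by the smoothly trivial structure of $C_1$. Careful local-model checks, parallel to those used in the stacking construction underlying Theorem \ref{thm: contact_cobordisms}, ensure that performing these codimension $2$ handle attachments and anti-surgeries inside the neighborhood $Y \times D^2_\epsilon$ leaves $(Z, \xi)$ unchanged. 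Composing the upward realization of $C_0$ with the downward realization of $C_1$ produces the desired contact embedding $(Y, \xi_1) \hookrightarrow (Z, \xi)$.
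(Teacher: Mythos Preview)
Your descent step has a genuine gap. The smoothly trivial Weinstein cobordisms $C_i$ produced by Theorem~\ref{thm: contact_cobordisms} necessarily contain subcritical handles: a nontrivial smoothly trivial cobordism cannot consist only of index-$n$ handles, since those would contribute nontrivially to $H_n(C_i, \partial_- C_i)$; in fact the proof of Theorem~\ref{thm: contact_cobordisms} shows each $C_i$ inherits an $H^{n-1}$ from the Casals--Murphy--Presas cobordisms used in the stacking. Your assertion that the co-cores needed to reverse $C_1$ ``are only required as subcritical isotropic or loose Legendrian disks'' in $Z$ is therefore false: the co-core of an $H^{n-1}$ handle in $C_1^{2n}$ is an $(n+1)$-dimensional coisotropic disk, which exceeds the Legendrian dimension $n$ in $(Z^{2n+1}, \xi)$. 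Neither Gromov's isotropic h-principle nor Murphy's loose Legendrian h-principle applies to such a disk, and there is no h-principle for coisotropic submanifolds of this dimension. Hence your ambient anti-surgery cannot remove the subcritical handle, and you end up at $(Y, \xi_1) \cup H^{n-1}$ rather than $(Y, \xi_1)$.

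The paper avoids this obstruction by passing through the overtwisted intermediate rather than the maximal one. It uses the Casals--Murphy--Presas cobordisms directly, from $(Y, \xi_{ot})$ \emph{up} to $(Y, \xi_i)$, and arranges for both to share the same subcritical $H^{n-1}$. The argument then goes from $(Y, \xi_0)$ down to $(Y, \xi_{ot}) \cup H^{n-1}$ by realizing the $n$-dimensional Lagrangian \emph{co-core} of the single $H^n$ in $C_0$ as a loose Legendrian disk in $Z$, and from there up to $(Y, \xi_1)$ by realizing the $n$-dimensional Lagrangian \emph{core} of the single $H^n$ in $C_1$ as another loose Legendrian disk in $Z$. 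Only $n$-disks ever appear, and the shared subcritical handle is never removed. Your route could in principle be repaired by unwinding the proof of Theorem~\ref{thm: contact_cobordisms} and matching the subcritical parts of $C_0$ and $C_1$, but at that point you have essentially reconstructed the paper's argument.
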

See Theorem \ref{thm: codimension_22}. We do not know whether Theorem \ref{thm: codimension_2} holds under the weaker assumption that $(Y^{2n-1}, \xi_1)$ just has an almost contact embedding into $(Z^{2n+1}, \xi)$, without the existence of a contact embedding of $(Y^{2n-1}, \xi_0)$. If  $(Z^{2n+1}, \xi_{ot})$ is overtwisted, then our result does hold under this weaker assumption, as proven by Borman, Eliashberg, and Murphy \cite{BEMtwisted}. We also note that Casals, Murphy, and Presas \cite{CMP} have shown that  a sufficiently large neighborhood of an overtwisted contact submanifold is also overtwisted. On the other hand, Theorem \ref{thm: codimension_2} shows that there are many codimension two embeddings of overtwisted contact manifolds into tight contact manifolds. Therefore the neighborhoods of the overtwisted submanifolds in the ambient contact manifold must be quite small; indeed, a sufficiently small neighorhood of any codimension two contact submanifold is tight  \cite{presas_tight_neighborhood}, even if the contact submanifold is abstractly overtwisted. 
\\

Now we give an outline of the rest of the paper. In Section \ref{sec: background}, we provide some background material on symplectic cobordisms and introduce the stacking construction. In Section \ref{sec: maximal_weinstein_domains}, we apply this construction to Weinstein domains and prove the results stated in Section \ref{ssec: cat_weinstein}. In Section \ref{sec: maximal_contact}, 
we construct maximal contact structures and prove the results from Section \ref{ssec: cat_contact}. In Section \ref{sec: Lag_fillings}, we consider some applications to Lagrangians.

\section*{Acknowledgements} 
We thank  Mohammed Abouzaid, Roger Casals, Emmy Murphy, and Zach Sylvan for helpful discussions. This work was partially supported by a NSF postdoc fellowship.

\section{Background}\label{sec: background}

In this section, we present some background material on symplectic cobordisms and discuss several gluing constructions. 

\subsection{Liouville and Weinstein cobordisms}
Before defining Weinstein domains and cobordisms, we first review a more general type of symplectic cobordism. 
A \textit{Liouville} cobordism $(W, \lambda )$ is a smooth cobordism $W$
with boundary $\partial W = \partial_- W \coprod \partial_+ W$ 
that is  equipped with a 1-form $\lambda$ such that $d\lambda$ is a symplectic form; in addition, the Liouville vector field $X$ defined by $i_X d\lambda = \lambda$ must be inward, outward transverse to the boundaries $\partial_-W, \partial_+W$ respectively. In this case, $(\partial_- W; ker \lambda|_{\partial_-W} ), (\partial_+ W; ker \lambda|_{\partial_+W} )$ are contact structures. We say that $(W, \lambda )$ is a Liouville cobordism between $(Y_-, \xi_-), (Y_+, \xi_+)$ if $\partial_-W, \partial_+W$ with the induced contact structures are contactomorphic to 
$(Y_-, \xi_-), (Y_+, \xi_+)$ respectively. A Liouville \textit{domain} is a Liouville cobordism with $\partial_-W = \emptyset$.

The natural notion of equivalence of Liouville cobordisms or domains is a \textit{Liouville homotopy}, a deformation through Liouville structures. As shown in \cite{CE12}, two homotopic Liouville domain $W_1, W_2$ have exact symplectomorphic \textit{completions} $\widehat{W}_1, \widehat{W}_2$, which are the open symplectic manifolds obtained by gluing the domains to the symplectizations of their contact boundaries. Homotopic domains also have contactomorphic boundaries.
These results demonstrates the importance of Liouville domains and homotopies 
since Moser's trick does not generally hold for open manifolds; for example, any two symplectic structures on $\mathbb{R}^{2n}$ are isotopic by Gromov's h-principle \cite{gromov_hprinciple}, i.e. can be connected through symplectic structures, but not all are symplectomorphic \cite{MM}.

A \textit{Weinstein cobordism} $(W^{2n}, \lambda, \phi)$ is a Liouville cobordism that admits a Morse function $\phi: W \rightarrow \mathbb{R}$  compatible with the Liouville structure. More precisely, $\phi$ is constant on $\partial_{\pm} W$  and the Liouville vector field $X$ is gradient-like for $\phi$. This implies that the stable manifolds of $X$ are isotropic with respect to the symplectic form $d\lambda$ \cite{CE12} and hence the critical points of $\phi$ have index at most $n$. Therefore admitting a Weinstein structure severely restricts the topology of $W^{2n}$. McDuff \cite{McD} constructed Liouville domains which do not satisfy these topological conditions and therefore are not Weinstein. However there are no known examples of Liouville domains that satisfy these topological conditions but are not Weinstein.

Associated to the Weinstein Morse function $\phi: W^{2n} \rightarrow \mathbb{R}$ is a natural collection of contact submanifolds and Weinstein domains inside $W$.
Namely, the regular level sets $\phi^{-1}(c)$ have natural contact structures with contact form $\lambda|_{\phi^{-1}(c)}$ 
and the sublevel sets $\phi^{-1}(\le c)$ 
are Weinstein \textit{subdomains} of $W$. 
These subdomains change in a precise way when we pass through a critical value of $\phi$.
Suppose that $p$ is a critical point of $\phi$ of index $k$ with critical value $\phi(p) = c$.
Let   
$W_- = \phi^{-1}(\le c-\epsilon), 
W_+ = \phi^{-1}(\le +\epsilon)$ be  Weinstein subdomains
below, above the critical value respectively and let 
$(Y_-, \xi_- ) = \partial W_-, 
(Y_+, \xi_+) = \partial W_+$
be nearby regular level sets with their induced contact structures.
The $X$-stable manifold of $p$ is an isotropic $k-$disk and intersects $Y_-$ is an isotropic $(k-1)-$sphere $\Lambda_p$ (isotropic with respect to the contact structure).  
Then $W_+$ is obtained from $W_-$ by attaching  a \textit{Weinstein handle} $H^k$ to $W_-$ along $\Lambda_p \subset (Y_-,\xi_-) = \partial W_-$, the \textit{attaching sphere} of the handle.
This procedure is constructive and any Weinstein domain can be built up this way; hence a Weinstein Morse function gives a symplectic handle-body decomposition of $W$.
We will use the notation 
$W^{2n} \cup H^k_\Lambda$ to denote the Weinstein domain obtained by 
Weinstein handle attachment along a (framed) isotropic sphere $\Lambda^{k-1} \subset \partial W$. 

We also note that
Weinstein handle attachment  changes the contact boundary $(Y^{2n-1}, \xi) = \partial W^{2n}$. The operation taking  $\partial W^{2n}$ to $\partial(W^{2n} \cup H^k_\Lambda)$ is called \textit{contact surgery} and it makes sense for any contact manifolds, not just the boundaries of Weinstein domains. 
We will use the notation
$(Y^{2n-1}, \xi) \cup H^k_{\Lambda}$ to denote the contact surgery of $(Y, \xi)$ along $\Lambda \subset (Y,\xi)$. 
Contact $n$-surgery  has an inverse operation called contact \textit{anti-surgery}. Given a Legendrian sphere $\Lambda_+ \subset (Y_+, \xi_+)$, there is a contact manifold $(Y_-, \xi_-)$ and a Legendrian $\Lambda_- \subset (Y_-, \xi_-)$
such that $(Y_+, \xi_+) = (Y_-, \xi_-) \cup H^n_{\Lambda_-}$ and the \textit{belt} sphere of $H^n_{\Lambda_-}$ coincides with $\Lambda_+ \subset (Y_+, \xi_+)$. We  use the notation   $(Y_+, \xi_+) \cup H^n_{\Lambda}[+1]$ to denote the anti-surgered contact manifold
$(Y_-, \xi_-)$.

The natural notion of equivalence between Weinstein structures
$(W, \lambda_0, \phi_0), (W, \lambda_1, \phi_1)$ on a fixed manifold $W$ is a \textit{Weinstein homotopy}. This is an interpolating 1-parameter family of structures $(W, \lambda_t, \phi_t), t \in [0, 1],$ that are Weinstein except at isolated $t$ at which $\phi_t$ has a
birth-death singularity,  and are Liouville for all $t$. 
From the handlebody point of view, Weinstein homotopies correspond to a sequence of three moves: isotopies of the attaching spheres
through isotropics, changing the order of attachment of handles that are not connected by gradient trajectories, and handle-slides. Since a Weinstein homotopy is a case of Liouville homotopy, homotopic Weinstein structures have symplectomorphic completions. 
Therefore we can consider Weinstein domains and associated objects up to Weinstein homotopy. For example, when we say that $W_0 = (W_0, \lambda_0, \phi_0)$ is a Weinstein subdomain of $W_1 = (W_1, \lambda_1, \phi_1)$, we mean that there is a Weinstein homotopy of $(W_1, \lambda_1, \phi_1)$ to  $(W_1, \lambda_1', \phi_1')$ so that $(W_0, \lambda_0, \phi_0)$ is a sublevel set of $\phi_1'$. 
For a Weinstein subdomain $W_0$ of $W_1 = (W_1, \lambda_1, \phi_1)$ that already is a sublevel set of $\phi_1$, a Weinstein homotopy of \textit{pairs} $(W_1, W_0)$ is a Weinstein homotopy of $W_1$ such that the subdomain $W_0$ is preserved by the homotopy, i.e $W_0$ is a sublevel set of the entire family of Weinstein Morse functions $\phi_t$. In particular, this is a Weinstein homotopy of the cobordism $W_1 \backslash W_0$. If $(W_1, W_0), (W_2, W_0)$ are Weinstein homotopic pairs, then there is an exact symplectomorphism $f: \widehat{W}_1 \rightarrow \widehat{W}_2$ that is the identity between $W_0 \subset  W_1$ and $W_0 \subset W_2$. 

In this paper, we will be mainly concerned with two special types of Weinstein cobordisms. A Weinstein cobordism $W^{2n}$ is \textit{subcritical} if it admits a Weinstein Morse function $\phi$ whose critical points have index \textit{strictly} less than $n$. A Weinstein cobordism $W^{2n}$ is \textit{flexible} if its index $n$ critical points are attached along \textit{loose} Legendrian spheres; see \cite{Murphy11, CE12} for details.  In particular, any subcritical cobordism is flexible. 
Loose Legendrians have dimension at least $2$ and hence flexible domains are defined only for $n \ge 3$. These two types of cobordisms satisfy existence and uniqueness h-principles \cite{CE12}. We say that a smooth cobordism $W^{2n}$ is \textit{almost Weinstein} if it admits an almost complex structure and a Morse function $\phi$ that is constant on $\partial W = \partial_- W \coprod \partial_+W$ and has critical points of index at most $n$. The existence h-principle \cite{Eli90} states that any almost Weinstein domain $W^{2n}, n \ge 3$, admits a flexible Weinstein structure. The uniqueness h-principle \cite{CE12} states that any two flexible Weinstein domains that are almost symplectomorphic, i.e. are connected by non-degenerate 2-forms, are Weinstein homotopic (and hence have symplectomorphic completions).

\subsection{Concatenating and stacking} \label{ssec: concatenate_stacking}
Now we discuss gluing Liouville and Weinstein cobordisms. Liouville cobordisms have a standard form near their boundaries. If $\partial_- W = (Y_-, \xi_-)$, there is an open neighborhood of $\partial_-W$ of the form $([0, \epsilon) \times Y_-, \lambda = e^t \alpha, X= \frac{\partial}{\partial t})$; here $t$ is the coordinate on the $[0, \epsilon)$ factor and $\alpha$ is some contact form for $(Y_-, \xi_-)$. A similar statement holds for $\partial_+W$. 
Using these collar neighborhoods, it is possible to glue Liouville cobordisms. More precisely, if $W_1, W_2$ are Liouville cobordisms such that $\partial_+W_1$ is contactomorphic to $\partial_-W_2$, then there exists a Liouville cobordism obtained by gluing $W_1,W_2$ along collared neighborhoods of $\partial_+W_1, \partial_-W_2$. We will denote the resulting cobordism $W_1 \circ W_2$ and call it the \textit{concatenation} of $W_1, W_2$. It is independent of the choice of collared neighborhoods up to Liouville homotopy. However $W_1 \circ W_2$ does depend on the choice of contactomorphism between $\partial_+ W_1, \partial_- W_2$; to make concatenation well-defined, we will concatenate $W_1, W_2$ only when $\partial_+W_1, \partial_-W_2$ actually coincide. 
Note that
$\partial_-(W_1 \circ W_2) = \partial_-W_1, \partial_+(W_1 \circ W_2) = \partial_+W_2$, and $W_1 \circ W_2$ contains $W_1$ as a Liouville
subcobordism. If $W_1,W_2$ are Weinstein cobordisms, then $W_1 \circ W_2$ is also a Weinstein cobordism.
As explained in Sections \ref{ssec: cat_weinstein},  the morphisms in the categories $\mathfrak{Weinstein}(W,J), \mathfrak{Contact}(Y,J)$ are composed via concatenation. 

The concatenation construction requires consecutive positive and negative contact boundaries of Liouville cobordisms to agree. In this paper, we introduce a different gluing construction of Weinstein cobordisms called \textit{stacking} that only requires the \textit{negative} ends of Weinstein cobordisms to agree. The following result implies Theorem \ref{thm: stacking} from the Introduction, which we phrased as a certain extension property in  $\mathfrak{Weinstein}(W,J)$. 
\begin{theorem}\label{thm: stacking2}
If  $W_1^{2n}, \cdots, W_k^{2n}$ are Weinstein cobordisms such that $\partial_- W_i=(Y, \xi)$ for all $i$, then there exists Weinstein cobordisms $C_i$ with $\partial_- C_i = \partial_+ W_i$  such that $W_i \circ C_i$ are all Weinstein homotopic to a Weinstein cobordism $W$; if $W_i^{2n}$ are smoothly trivial and $n \ge 3$,  so are $C_i^{2n}$ and $W^{2n}$. Furthermore, $W^{2n}$ is covered by the subdomains $W_i^{2n}$ and is homotopy-equivalent, in the sense of topological spaces, to $\coprod W_i / (\partial_- W_i \sim \partial_- W_j)$.
\end{theorem}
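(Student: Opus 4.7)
The plan is to build $W$ by attaching all the Weinstein handles of $W_2, \ldots, W_k$ on top of $W_1$, after first translating their attaching data from $Y = \partial_- W_1$ to $\partial_+ W_1$ using the Liouville flow. For clarity I describe the case $k=2$; the general case follows by iterating the construction.

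First I would fix Weinstein Morse functions on $W_1$ and $W_2$, and write $W_2 = (Y \times [0,1]) \cup H_1 \cup \cdots \cup H_m$ with handles ordered by increasing index, where each $H_j$ is attached along an isotropic sphere $\Lambda_j$ in the appropriate intermediate level set. For $j = 1$ we have $\Lambda_1 \subset Y$; perturb it within its isotropic isotopy class to be disjoint from the traces in $Y$ of the stable manifolds of the critical points of $W_1$, which is possible by dimension count since those traces have dimension at most $n-1$ in $Y^{2n-1}$. The Liouville flow on $W_1$ then carries $\Lambda_1$ to an isotropic sphere $\Lambda_1' \subset \partial_+ W_1$, and we attach $H_1$ along $\Lambda_1'$. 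Iterating for $H_2, \ldots, H_m$ (perturbing each attaching sphere off the stable manifolds created at earlier stages and flowing it to the current positive boundary) produces a Weinstein cobordism $C_1$ on top of $W_1$, and we set $W := W_1 \circ C_1$.

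To identify $W_2$ as a Weinstein subdomain of $W$ with $\partial_- W_2 = \partial_- W = Y$, I would invoke a handle-commutation principle: because each $H_j$ was attached via the Liouville translate of its original attaching sphere $\Lambda_j \subset Y$, a Weinstein homotopy of $W$ reorders the critical values, effectively sliding the handles of $W_2$ below those of $W_1$. Under this rearrangement $(Y \times [0,1]) \cup H_1 \cup \cdots \cup H_m = W_2$ becomes a sublevel set of the new Weinstein Morse function on $W$, while the remaining critical points (those originally belonging to $W_1$) form a Weinstein cobordism $C_2$ above, yielding $W \simeq W_2 \circ C_2$. The covering claim is then immediate since every handle of $W$ comes from some $W_i$, and the homotopy equivalence $W \simeq \coprod W_i / (\partial_- W_i \sim \partial_- W_j)$ follows from the handle decomposition: $W$ deformation retracts onto $Y$ together with the cores of all the handles of every $W_i$, which is precisely the wedge of the $W_i$ along their common $Y$. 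If each $W_i \cong Y \times [0,1]$ smoothly, the Liouville-flow translation and handle reordering preserve the cancellation combinatorics of the handle decomposition, so $C_i$ and $W$ are also smoothly trivial.

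The hard part will be the interaction of critical (Legendrian, index $n$) handles across different $W_i$. Subcritical handles can be repositioned freely by general position, but Legendrian attaching spheres in a contact manifold cannot in general be isotoped into desired positions within their Legendrian isotopy class, so both the perturbation-off-stable-manifolds step and the reordering step can fail for critical handles. Flexibility is used here in an essential way: whenever an obstruction arises, one inserts into $C_1$ an auxiliary flexible subcobordism that permits a loose-Legendrian isotopy via Murphy's h-principle, together with subsequent cancelling flexible handles, so that the net effect on the Weinstein homotopy type of the subdomains $W_1, W_2 \subset W$ is trivial. This same flexibility argument delivers the smoothly trivial conclusion even when naive handle combinatorics would otherwise fail, which explains the hypothesis $n \geq 3$ in the second assertion.
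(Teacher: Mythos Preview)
Your overall strategy---make the attaching data of the $W_i$ disjoint in $(Y,\xi)$ and then attach all handles in any order---is exactly the paper's approach. But you have introduced a phantom obstruction in the critical-index case, and your proposed fix via loose Legendrians is both unnecessary and harmful.

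The misconception is in your ``hard part'' paragraph. You assert that Legendrian attaching spheres cannot in general be displaced from one another within their Legendrian isotopy class. This is false: two Legendrians $\Lambda_1^{n-1}, \Lambda_2^{n-1}$ in $(Y^{2n-1},\xi)$ can always be made disjoint by a $C^0$-small \emph{Legendrian} isotopy. The point is that a Weinstein neighborhood of $\Lambda_1$ is contactomorphic to $J^1(\Lambda_1)$, nearby Legendrians are graphs of $1$-jets of functions on $\Lambda_1$, and Thom's jet transversality theorem produces a function $f$ whose $1$-jet graph avoids the (at most $(n-1)$-dimensional) stratified isotropic $\Lambda_2 \cap J^1(\Lambda_1)$. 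The Legendrian isotopy from $\Lambda_1$ to $\Gamma(j^1 f)$ then extends to an ambient contact isotopy. This is the entire content of the displacement step in the paper's proof, and it works uniformly for all indices and all $n\ge 2$---no h-principle, no loose charts, no auxiliary flexible handles. Your dimension-count sentence for subcritical spheres was already correct; you simply needed to observe that the same count, implemented through $J^1$ rather than through naive smooth general position, works for Legendrian spheres as well.

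Because your fix invokes Murphy's h-principle, it would force $n\ge 3$ on the \emph{first} assertion of the theorem, which the statement does not require. It also obscures what actually happens in the smoothly-trivial clause: the paper does not use loose Legendrians there either, but rather the Whitney trick. When each $W_i$ is smoothly $Y\times[0,1]$ with a single $(n-1)$-handle and a single $n$-handle, one must check that in the combined cobordism $W$ the Legendrian $\Lambda_i$ still smoothly cancels its $(n-1)$-handle. The Whitney disks realizing this cancellation are $2$-dimensional and, for $n\ge 3$, can be taken disjoint from the other Legendrians $\Lambda_j$; this is where the hypothesis $n\ge 3$ genuinely enters. Your ``cancellation combinatorics are preserved'' sentence is in the right spirit but needs this Whitney-disk argument to be made precise.
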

\begin{proof}
Let $S_i \subset W^{2n}_i$ be the skeleton of Liouville vector field $X_i$ of the Weinstein structure  $(W^{2n}_i, \lambda_i, \phi_i)$, i.e. the set of points of $W^{2n}$ that does not reach to $\partial_+W^{2n}$ under the $X_i$-flow. As shown in \cite{CE12}, this singular space is compact and stratified by isotropic disks, namely the cores of the Weinstein handles of $W^{2n}_i$. The intersection $\Lambda_i = S_i \cap (Y^{2n-1}, \xi)$ is similarly stratified by isotropic submanifolds, which have dimension at most $n-1$; this is the image of the attaching spheres of the Weinstein handles of $W^{2n}_i$ in $(Y^{2n-1}, \xi)$. We note that $S_i$ and $\Lambda_i$ may each have many components. 

We now show that there are ambient contactomorphisms $\phi_i$ of $(Y^{2n-1}, \xi)$ contact isotopic to the identity such that $\phi_i(\Lambda_i)$ are all disjoint. First we suppose that $\Lambda_1$ is a connected smooth Legendrian. Then a small neighborhood of $\Lambda_1$ is contactomorphic to $J^1(\Lambda_1)$ and nearby Legendrians are given by graphs of 1-jets of functions. Thom's jet transversality theorem shows that for any stratified submanifold $\Sigma$ of $J^1(\Lambda_1)$ (not necessarily isotropic) whose top-dimensional smooth strata have dimension $k < n$, there exists a $C^0$-small function $f: \Lambda_1 \rightarrow \mathbb{R}$ whose 1-jet $\Gamma(f)$ in
$J^1(\Lambda_1)$ is disjoint from $\Sigma$; see Theorem 2.3.2 of \cite{eliashberg_mishachev}. Since $\Lambda_2, \cdots, \Lambda_k$ are stratified by isotropics, which have dimension less than $n$, we can apply Thom's theorem to
$\Sigma = (\Lambda_2 \cup \cdots \cup \Lambda_k )\cap J^1(\Lambda_1)$ and conclude that there exists a function $f: \Lambda_1 \rightarrow \mathbb{R}$ such 
$\Gamma(f) \subset J^1(\Lambda_1)$ is disjoint from $(\Lambda_2 \cup \cdots \cup \Lambda_k )\cap J^1(\Lambda_1)$. Since $\Lambda_1$ and $\Gamma(f)$ are Legendrian isotopic in $J^1(\Lambda_1)$, there exists an ambient contactomorphism $\phi_1$ contact isotopic to the identity such that $\phi_1(\Lambda_1) = \Gamma(f)$ is disjoint from $\Lambda_2, \cdots, \Lambda_k$. If $\Lambda_1$ is not smooth (or disconnected), then we construct $\phi_1$ by induction on the strata and components of $\Lambda_1$ 
(for example, by using thickenings of the subcritical strata to make them Legendrian). Finally, we construct $\phi_2, \cdots, \phi_k$ by induction. 

Now we attach Weinstein handles along \textit{all} the  $\Lambda:= \coprod_{i=1}^k \phi_i(\Lambda_i) \subset (Y, \xi)$. This is possible the $\phi_i(\Lambda_i)$ are disjoint and handle attachment changes the contact manifold only in a small neighborhood of the attaching sphere; therefore, the rest of isotropic spheres persist to the new contact manifold when we attach a handle. 
If we can first attach along $\phi_i(\Lambda_i)$, the resulting cobordism is Weinstein homotopic to $W_i$ since $\phi_i(\Lambda_i)$ is contact isotopic to $\Lambda_i$, the attaching spheres of $W_i$. Then we view $\coprod_{j \ne i} \phi_j(\Lambda_j)$ as an isotropic subspace of $\partial_+ W_i$ and let $C_i$ be the Weinstein cobordism with $\partial_-C_i = \partial_+ W_i$ obtained by attaching along $\coprod_{j \ne i} \phi_j(\Lambda_j)$. Since the order of handle attachment amongst the different $\phi_i(\Lambda_i)$ does not matter, all the $W_i \circ C_i$ are Weinstein homotopic and we call this common Weinstein cobordism $W$. 
Note that the $W_i$'s cover $W$ since $W$ is just the union of all the $W_i$'s, glued along their common subspace $(Y, \xi) = \partial_-W_i$; in particular, $W$ is homotopy-equivalent,  in the sense of topological spaces, to $\coprod W_i / (\partial_-W_i \sim \partial_- W_j)$. 

Finally, suppose that $W_i^{2n}$ are all smoothly trivial and $n \ge 3$. For simplicity, we will assume that $W_i^{2n} = H^{n-1}_i \cup H^n_{\Lambda_i}$ for a single Legendrian $\Lambda_i$ that can be smoothly isotoped to intersect the belt sphere of $H^{n-1}_i$ once; this can always be assumed to be the case \cite{Lazarev_critical_points}. Then $W = H^{n-1}_1 \cup \cdots H^{n-1}_k \cup H^n_{\Lambda_1} \cdots \cup  H^n_{\Lambda_k}$
and $C_i = W \backslash H^{n-1}_i \cup H^n_{\Lambda_i}$. 
By looking at the trace of this smooth isotopy, we see that there are Whitney disks that cancel out all intersection points of $\Lambda_i$ with this belt sphere (except for one). Since $n \ge 3$, these Whitney disks are generically disjoint from any other Legendrian sphere $\Lambda_j$. Hence we can use these disjoint Whitney disks to smoothly isotope the link $\Lambda_1 \coprod \cdots \coprod \Lambda_k \subset (Y, \xi) \cup H^{n-1} \cup \cdots \cup H^{n-1}_k$  to a link 
$\Lambda_1' \coprod \cdots \coprod \Lambda_k'$
such that each $\Lambda_i'$ intersects the belt sphere of $H^{n-1}_i$ exactly once. Therefore $W^{2n}$ and $C_i^{2n}$ are also   smoothly trivial.
\end{proof}

We will use the following notation to denote a Weinstein cobordism constructed as in Theorem \ref{thm: stacking2}: 
$$
Stack_{(Y, \xi)}(W_1, \cdots, W_k)
$$
We include the common contact manifold $(Y, \xi)$ in the notation to highlight that the gluing is done along $(Y, \xi)$. As seen in the proof of Theorem \ref{thm: stacking2}, the construction of $Stack_{(Y, \xi)}(W_1, \cdots, W_k)$ depends on choices and hence this cobordism is not  well-defined  in terms of just $W_1, \cdots, W_k$; see the discussion below. 
So the notation $Stack_{(Y, \xi)}(W_1, \cdots, W_k)$  just refers to a general cobordism constructed as in Theorem \ref{thm: stacking2}. 

We will also consider a slight generalization of 
$Stack_{(Y, \xi)}(W_1, \cdots, W_k)$. If $W_1, \cdots, W_k$ are Weinstein cobordisms with a common Weinstein subcobordism $\phi_i: W_0 \hookrightarrow W_i$, then we let 
$$
Stack_{W_0}(W_1, \cdots, W_k)
$$
denote the Weinstein cobordism obtained by gluing along the common subcobordism $W_0$; more precisely, $Stack_{W_0}(W_1, \cdots, W_k) := W_0 \circ Stack_{\partial W_0}(W_1\backslash \phi_1(W_0), \cdots, W_k\backslash \phi_k(W_0))$.
We will write $Stack_{W_0}((W_1, \phi_1), \cdots, (W_k, \phi_k))$ when we want to emphasize that the embeddings $\phi_i$ are part of the data. 
This Weinstein cobordism contains $W_i$ and $W_0$ as subcobordisms such that the inclusion of $W_0$ into $Stack_{W_0}(W_1, \cdots, W_k)$ factors through the inclusion of $W_i$ into $Stack_{W_0}(W_1, \cdots, W_k)$. 
If $W_0$ is the trivial Weinstein cobordism 
$(Y, \xi)\times [0, 1]$, 
then we recover the previous construction
$Stack_{(Y, \xi)}(W_1, \cdots, W_k)$.
We also note that this gluing can be done along more general objects like Weinstein sectors or Liouville cobordisms; it is important that the complements $W\backslash W_0$ are Weinstein cobordisms
but the objects we glue along can be Liouville. 

Theorem \ref{thm: stacking2} shows that the Weinstein cobordisms $W_i \circ C_i$ are Weinstein homotopic for different $i$. These Weinstein homotopies are quite special because they have constant Liouville vector field. More precisely, we fix some constants $a < b < c$. Then there is a fixed Liouville vector field $X$ on $W$ and $k$ Morse functions $\phi_i: W^{2n} \rightarrow \mathbb{R}$ such that $X$ is gradient-like for all $\phi_i$ and 
$\phi_i^{-1}(a) = \partial_- W = (Y, \xi), \phi_i^{-1}(c) = \partial_+ W, \phi_i^{-1}([a,b]) = W_i$ and
$\phi_i^{-1}([b,c]) = C_i$. 
Furthermore, if we fix Weinstein functions  $\psi_i : W_i \rightarrow \mathbb{R}$ such that $\psi_i^{-1}(a)  = \partial_-W_i, \psi_i^{-1}(b) = \partial_+W_i$, then we can assume that 
$\phi_i|_{W_i} =  \psi_i$, i.e. $\phi_i$ extends $\psi_i$. 
Since $X$ is gradient-like for all $\phi_i$, the
Weinstein homotopy from $(W, X, \phi_i)$ to $(W, X, \phi_j)$ can be given by $(W, X, (1 - t)\phi_i + t\phi_j )$; this will be a Weinstein homotopy if $\phi_i, \phi_j$ are generic. The Liouville vector field $X$ is independent of $t$ and hence these Weinstein structures have the same Liouville skeleton.

From the proof of Theorem \ref{thm: stacking2}, it is clear that we can stack Weinstein cobordisms so that $Stack_{(Y, \xi)}(W_1, \cdots, W_k)$ is Weinstein homotopic to $Stack_{(Y, \xi)}(W_{\sigma(1)}, \cdots, W_{\sigma(k)})$ where $\sigma$ is any permutation of $\{1, \cdots, k\}$. 
Also, we can also perform the construction so that 
$Stack_{(Y, \xi)}(W_1, \cdots, W_i, Stack_{(Y, \xi)}(W_{i+1}, \cdots, W_{k}))$ and  
$Stack_{(Y, \xi)}(W_1, \cdots, W_i, W_{i+1}, \cdots, W_k)$ are Weinstein homotopic. However, as pointed out above, there are many choices when constructing $Stack_{(Y, \xi)}(W_1, \cdots, W_k)$ from $W_1, \cdots, W_k$; indeed many $W$ and $C_i$ satisfy the conditions in Theorem \ref{thm: stacking2}.
As defined, $W$ depends on the Weinstein presentation of $W_1, \cdots, W_k$. Given Weinstein presentations of $W_i$, there are still further choices to be made
by viewing the individual attaching spheres of the $W_i$ as a \textit{link}.  These choices can even affect the smooth topology of $Stack_{(Y,\xi)}(W_1, \cdots, W_k)$, as the following examples demonstrate.
\begin{examples}\label{ex: stacking_diff_topology}
Let $W_1 = T^*S^n = B^{2n}\cup H^n_{\Lambda_{unknot, 1}}$ and $W_2 = T^*S^n = B^{2n}\cup H^n_{\Lambda_{unknot, 1}}$. To construct $Stack_{B^{2n}}(W_1, W_2)$ we need to view $\Lambda_{unknot, 1}\coprod \Lambda_{unknot,2}$ as a Legendrian link in the \textit{same} contact manifold $(S^{2n-1}, \xi_{std})= \partial B^{2n}$. In general, there are many different ways to do this, even smoothly. For example, if $\Lambda_{unknot, 1}\coprod \Lambda_{unknot,2} \subset (S^{2n-1}, \xi_{std})$ are Legendrian unlinked, then $Stack_{B^{2n}}(W_1, W_2) = T^*S^n \natural T^*S^n$, the boundary connected sum of two copies of $T^*S^n$. On the other hand, if $\Lambda_{unknot,2}$ is a Reeb pushoff of $\Lambda_{unknot,1}$ (so that they have linking  number $-1$), then $Stack_{B^{2n}}(W_1, W_2) = T^*S^n \sharp_p T^*S^n$ is the plumbing of two copies of $T^*S^n$. So these two constructions yield manifolds with different intersection forms. 
\end{examples}

\begin{examples}
Generalizing the previous example, we note that Weinstein handle attachment is the special case of stacking when the relevant Weinstein cobordisms have only one critical point each. Consider two Legendrian spheres $\Lambda_1, \Lambda_2 \subset (Y, \xi)$ and Weinstein cobordisms $W_i = (Y, \xi) \times [0, 1] \cup H^n_{\Lambda_i}, i = 1, 2,$  obtained by attaching a Weinstein handle to $(Y, \xi)$ along $\Lambda_i$. These cobordisms have common negative boundary $(Y,\xi)$ and so we can construct $Stack_{(Y, \xi)}(W_1, W_2)$. This is precisely the result of Weinstein handle attachment to some Legendrian link 
$\Lambda_1 \coprod \Lambda_2$ such that the individual components are
Legendrian isotopic to $\Lambda_1, \Lambda_2$ respectively.
So in this case the stacking operation is not well-defined because a choice of two Legendrian embedding (up to Legendrian isotopy of each component) does not determine a Legendrian link. Even if we fix the two Legendrian embeddings, the two Legendrians might intersect and we will need to perturb them to get an embedded link; this choice of perturbation can lead to non-isotopic Legendrian links. 
\end{examples}

\begin{examples}\label{example: flexible_stacking}
As we will see in the proof of Proposition \ref{prop: all_domains_realized}, for any two Weinstein domains $W, V$ and any way of constructing $Stack(W, V)$, we can construct $Stack(W_{flex}, V_{flex})$ so that it is Weinstein homotopy equivalent to  $Stack(W, V)$. In particular, we can arrange so that all the symplectic data of  $Stack(W, V)$ is contained in the linking of the loose Legendrian attaching spheres of $W_{flex}, V_{flex}$ and hence is not uniquely defined just from the data of $W_{flex}, V_{flex}$. In particular, any Weinstein structure $W$ is Weinstein homotopic to $Stack_{W_{flex}}(W_{flex}, W_{flex})$. 
\end{examples}

Of course if $W_i$ are Liouville cobordisms such that $\partial_+W_i = \partial W_{i+1}$, then we can concatenate these cobordisms to produce the cobordism $W := W_1 \circ W_2 \circ \cdots \cdot W_k$. However even in
this restricted case, it is not clear that we can Liouville homotope $W$ to $W_i \circ C_i$ for some Liouville cobordism $C_i$  so that 
$W_i$ is the lower level cobordism. So in the Liouville case, it is not clear that we can switch the order of the cobordisms arbitrarily. We also note that Theorem \ref{thm: stacking2} can fail for Liouville cobordisms just for topological reasons. If $k$-handles are present for $k \ge n+1$, then the attaching spheres of different $W_i$ may intersect, even after generic smooth perturbation, and  it may be
impossible to construct $W$ even smoothly. We do not know whether Liouville cobordisms can be stacked when this topological obstruction is absent.
\begin{question}\label{question: liouville_stacking}
 Suppose $W_1, W_2$ are smoothly trivial Liouville cobordisms with $\partial_- W_1 = \partial_- W_2 = 
 (Y, \xi)$. Do there exist (smoothly trivial) Liouville cobordisms $C_1, C_2$ with $\partial_- C_1 = \partial_+ W_1, \partial_- C_2 = \partial_+ W_2$ such that 
 $W_1 \circ C_1$ is Liouville homotopic to $W_2 \circ C_2$?
\end{question}
It seems likely that the proof of Theorem \ref{thm: stacking2} carries over to slightly more general structures than Weinstein structures. The proof requires the Liouville skeleton to be half-dimensional and hence probably holds whenever this is satisfied, e.g. Morse-Bott Weinstein
structures and Liouville structures whose skeleton is stratified by isotropics. 

Finally, we note that the stacking and concatenation operations might not coincide even when both are defined. 
Consider two Weinstein cobordisms $W_1, W_2$ such that 
$\partial_-W_1 = \partial_- W_2 = (Y, \xi)$ and
$\partial_+W_1 = \partial_-W_2$ so that it is possible to form both $Stack_{(Y,\xi)}(W_1,W_2)$ and $W_1 \circ W_2$. Then
$W_1 \circ W_2$ is a well-defined Weinstein cobordism depending just on $W_1, W_2$ while $Stack_{(Y,\xi)}(W_1,W_2)$ is not well-defined
and hence in general they will not agree. We do not know when it is possible to define
$Stack_{(Y,\xi)}(W_1,W_2)$ so that it agrees with $W_1 \circ W_2$. As we explain in Proposition \ref{prop: all_domains_realized} if
$W_2$ is smoothly trivial, then $W_1 \circ W_2$ is Weinstein homotopic to $Stack_{(Y,\xi)}(W_1,W_3)$ for some other Weinstein cobordism $W_3$ which is possibly different from $W_2$. The issue is that when we try to push the Weinstein cobordism $W_2$ in $W_1 \circ W_2$ down to the common contact manifold $\partial_-W_1$, we need to make some choices involving handle-slides and it is not clear that the resulting cobordism will be Weinstein homotopic to $W_2$ (as opposed to some other $W_3$).

\section{Stacking Weinstein domains}\label{sec: maximal_weinstein_domains} 
\subsection{Maximal Weinstein domains}
Our first application of stacking is to construct maximal Weinstein domains 
containing many Weinstein domains as subdomains. In previous work \cite{Lazarev_critical_points}, the author showed that the flexible  domain $W_{flex}$ is a minimal element of $\mathfrak{Weinstein}(W,J)$. 
\begin{theorem}\cite{Lazarev_critical_points}\label{thm: prev_crit_points}
Any Weinstein domain $W^{2n}, n \ge 3,$ can be Weinstein homotoped to 
$W_{flex}^{2n} \circ C^{2n}$ for some smoothly trivial Weinstein cobordism $C^{2n}$.
\end{theorem}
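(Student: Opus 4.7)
The plan is to Weinstein-homotope $W$ into a decomposition whose initial factor is a flexible Weinstein domain in the almost symplectomorphism class of $W$, and whose remaining factor is a smoothly trivial Weinstein cobordism. Fix a Weinstein handle decomposition
\[
W = W_{sub} \cup H^n_{\Lambda_1} \cup \cdots \cup H^n_{\Lambda_m},
\]
with $W_{sub}$ the subcritical skeleton. Since every handle of $W_{sub}$ has index strictly less than $n$, the domain $W_{sub}$ is automatically flexible; the only potential obstruction to $W$ itself being flexible is that the critical attaching spheres $\Lambda_i \subset \partial W_{sub}$ may fail to be loose.

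The central construction is to stack on top of $W$ a Weinstein cobordism built from canceling pairs $(H^{n-1}_i, H^n_{K_i})$, one pair per critical handle of $W$. Each pair is arranged so that the Legendrian $K_i$ intersects the belt sphere of $H^{n-1}_i$ transversely in a single point, making the pair smoothly trivial. The resulting enlarged Weinstein domain is Weinstein homotopic to $W$ composed with this smoothly trivial block, and its underlying smooth manifold agrees with that of $W$.

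Next I would reorder the enlarged handle decomposition using Weinstein handle slides and permutations of non-interacting handles, pushing all subcritical handles (including the new $H^{n-1}_i$'s) to be attached before the index-$n$ handles. Once the $H^{n-1}_i$'s are in place, loose charts become available in the enlarged contact boundary, and by Murphy's h-principle for loose Legendrians one can arrange that, in the reshuffled decomposition, each original critical handle is replaced by an attachment along a loose Legendrian $\Lambda_i^{loose}$ within its formal isotopy class, with the discrepancy absorbed into the top block. The decomposition then takes the form
\[
\Bigl( W_{sub} \cup \bigcup_i H^{n-1}_i \cup \bigcup_i H^n_{\Lambda_i^{loose}} \Bigr) \circ \Bigl( \bigcup_i H^n_{K_i} \Bigr).
\]
The first factor is a flexible Weinstein domain almost symplectomorphic to $W$, and is therefore Weinstein homotopic to $W_{flex}$ by the Cieliebak--Eliashberg uniqueness h-principle for flexible Weinstein structures. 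The second factor is the required smoothly trivial cobordism $C$, since each $K_i$ was built to cancel $H^{n-1}_i$ smoothly.

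The main obstacle is that loosification and smooth cancellation must remain compatible under the handle reordering. Concretely, the transverse single intersection of $K_i$ with the belt sphere of $H^{n-1}_i$ must survive the symplectic modifications used to produce $\Lambda_i^{loose}$, and the entire collection of attaching spheres must simultaneously be realized as an embedded Legendrian link so that distinct canceling pairs do not interfere with one another. This is precisely where the hypothesis $n \ge 3$ enters: Murphy's h-principle requires Legendrians of dimension at least two, and the Whitney-disk argument preserving the once-transverse intersection of $K_i$ with its cancellation partner under the loosifying moves requires codimension at least three in the contact boundary, exactly as in the final paragraph of the proof of Theorem~\ref{thm: stacking2}.
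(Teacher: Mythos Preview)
The paper does not prove this theorem; it is quoted from \cite{Lazarev_critical_points} and used as a black box. So there is no ``paper's own proof'' to compare against here.

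That said, your sketch has the correct overall architecture---introduce smoothly cancelling $(H^{n-1}_i, H^n_{K_i})$ pairs, reorder, and split off a flexible bottom and a smoothly trivial top---but the crucial step is asserted rather than argued. You write that ``once the $H^{n-1}_i$'s are in place, loose charts become available'' and that each $\Lambda_i$ can then be replaced by a loose representative ``with the discrepancy absorbed into the top block.'' Neither claim is automatic. Attaching subcritical handles away from $\Lambda_i$ does not by itself produce a loose chart for $\Lambda_i$; a Legendrian that was not loose in $\partial W_{sub}$ remains non-loose after adding disjoint $(n-1)$-handles. And ``absorbing the discrepancy'' hides the entire content: you must exhibit a Weinstein homotopy that simultaneously loosifies the $\Lambda_i$ and keeps the $K_i$ in smooth cancelling position with the $H^{n-1}_i$.

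The mechanism actually used in \cite{Lazarev_critical_points} is a specific handle-slide lemma: one arranges the auxiliary Legendrians $K_i$ to be \emph{loose}, and then handle-slides each $\Lambda_i$ over $K_i$. The key fact is that a Legendrian obtained by sliding over a loose Legendrian is itself loose. This is what converts the original attaching spheres into loose ones while preserving the Weinstein homotopy type of the total domain; the smoothly trivial cobordism $C$ is then the block containing the (now possibly non-loose) $K_i$'s. Your sketch gestures at handle-slides but does not isolate this lemma, which is the heart of the argument.
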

Hence the smooth topology of $W^{2n}$ can be transferred to a symplectically trivial domain $W_{flex}^{2n}$ while the symplectic topology of $W^{2n}$ can be transferred to the smoothly trivial cobordism $C^{2n}$. 
These smoothly trivial cobordisms be stacked without changing the smooth topology of $W^{2n}$. Using Theorem \ref{thm: prev_crit_points} and the stacking construction, we prove the following result, a slightly stronger version of Theorem \ref{thm: finitely_many_subdomains} from the Introduction. 
\begin{theorem}\label{thm: finitely_many_subdomains2} 
For any almost symplectomorphic Weinstein domains $W_1^{2n}, \cdots, W_k^{2n}, n \ge 3,$ there exists an almost symplectomorphic Weinstein domain $W^{2n}$ 
	such that all $W_i^{2n}$ are Weinstein subdomains 
	of $W^{2n}$
	and $W^{2n}\backslash W_i^{2n}$ is a smoothly trivial Weinstein cobordism. Furthermore, the subdomains $W_i^{2n}$'s cover $W^{2n}$.
\end{theorem}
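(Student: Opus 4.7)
The plan is to combine Theorem \ref{thm: prev_crit_points} with the generalized stacking operation $\mathrm{Stack}_{W_0}$ introduced in Section \ref{ssec: concatenate_stacking}. The key idea is to arrange for a common flexible Weinstein subdomain to sit inside each $W_i^{2n}$, and then stack the resulting smoothly trivial complements.

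First I would apply Theorem \ref{thm: prev_crit_points} to each $W_i^{2n}$, obtaining a Weinstein homotopy $W_i \simeq W_{i,\mathrm{flex}} \circ C_i$ with $C_i$ a smoothly trivial Weinstein cobordism. Because gluing a smoothly trivial cobordism does not change the almost symplectomorphism type, each $W_{i,\mathrm{flex}}$ lies in the same almost symplectic class $(W,J)$ as $W_i$; hence the $W_{i,\mathrm{flex}}$ are mutually almost symplectomorphic. The uniqueness h-principle for flexible Weinstein domains (which requires $n \geq 3$) then implies that they are all Weinstein homotopic, so I may identify them with a single common flexible domain $W_{\mathrm{flex}}^{2n}$ sitting inside each $W_i$ via an embedding $\phi_i \colon W_{\mathrm{flex}} \hookrightarrow W_i$ whose complement is the smoothly trivial cobordism $C_i$.

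Next I would form
$$
W^{2n} := \mathrm{Stack}_{W_{\mathrm{flex}}}\bigl((W_1,\phi_1),\ldots,(W_k,\phi_k)\bigr) = W_{\mathrm{flex}} \circ \mathrm{Stack}_{\partial W_{\mathrm{flex}}}(C_1,\ldots,C_k).
$$
Applying Theorem \ref{thm: stacking2} to the smoothly trivial cobordisms $C_1,\ldots,C_k$ over their common negative boundary $\partial W_{\mathrm{flex}}$ produces, for each $i$, a smoothly trivial Weinstein cobordism $D_i$ with $\partial_- D_i = \partial W_i$ such that $C_i \circ D_i$ is Weinstein homotopic to $\mathrm{Stack}_{\partial W_{\mathrm{flex}}}(C_1,\ldots,C_k)$. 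Consequently $W \simeq W_i \circ D_i$ contains each $W_i$ as a Weinstein subdomain with smoothly trivial complement $D_i$, and since smoothly trivial cobordisms preserve the almost symplectomorphism class, $W$ is almost symplectomorphic to the $W_i$.

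The covering assertion follows from the final clause of Theorem \ref{thm: stacking2}: the stacked cobordism $\mathrm{Stack}_{\partial W_{\mathrm{flex}}}(C_1,\ldots,C_k)$ is covered by the $C_i$, so $W = W_{\mathrm{flex}} \cup \bigcup_i C_i = \bigcup_i (W_{\mathrm{flex}} \cup C_i) = \bigcup_i W_i$. The only real obstacle in the argument is the step arranging a common flexible subdomain in every $W_i$; this is where the uniqueness h-principle for flexible Weinstein structures is essential, and it is the sole reason the hypothesis $n \geq 3$ enters. Once that identification is made, the stacking construction and its smooth triviality clause do all the geometric work.
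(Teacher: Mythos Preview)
Your proof is correct and follows essentially the same route as the paper: decompose each $W_i$ via Theorem~\ref{thm: prev_crit_points}, identify the flexible pieces using the uniqueness h-principle, and then stack the smoothly trivial complements over the common $W_{\mathrm{flex}}$ using Theorem~\ref{thm: stacking2}. One small correction to your commentary: the hypothesis $n \ge 3$ is not used \emph{solely} for the uniqueness h-principle---it is also required in Theorem~\ref{thm: prev_crit_points} and in the smoothly-trivial clause of Theorem~\ref{thm: stacking2}.
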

\begin{proof}
By Theorem \ref{thm: prev_crit_points}, we can write $W_i$ as $W_{i, flex} \circ X_i$, where $X_i$ is a smoothly trivial Weinstein cobordism. Since $W_i$ are almost symplectomorphic, so are $W_{i,flex}$. In addition,  $W_{i,flex}$ are flexible and so by the uniqueness h-principle \cite{CE12}  they are Weinstein homotopic;  we will identity them with a fixed $W_{flex}$ and view  $W_{flex}$ as a common subdomain of all the $W_i$. 
Then we set $W: = Stack_{W_{flex}}(W_1, \cdots, W_k)$; equivalently, 
 $W = W_{flex} \circ X$, where $X:=Stack_{\partial W_{flex}}(X_1, \cdots, X_k)$. 
In fact, the stacking construction shows that 
$X = X_i \circ C_i$ for some smoothly trivial Weinstein cobordisms $C_i$ and so $W$ is Weinstein homotopic to $W_{flex} \circ X_i \circ C_i = W_i \circ C_i$. As a result,  $W_i$ is a Weinstein subdomain of $W$ and $W\backslash W_i = C_i$ is smoothly trivial.
Since any Weinstein structure on a smoothly trivial cobordism is 
almost symplectomorphic to the trivial one, $W$ is almost symplectomorphic to $W_i$ for all $i$.
The fact that the $W_i$'s cover $W$ is a general feature of the stacking construction.
\end{proof}
\begin{remark}
	The claim in Theorem \ref{thm: finitely_many_subdomains2} implicitly involves Weinstein homotopies
	 (as does the definition of Weinstein subdomains). 
	 More precisely, for any Weinstein structures $(W_1, \lambda_1, \phi_1), \cdots, (W_k, \lambda_k, \phi_k)$, there are Weinstein  structures $(W, \lambda_1', \phi_1'), \cdots, (W, \lambda_k', \phi_k')$ 
that are all Weinstein homotopic and $(W_i, \lambda_i, \phi_i)$ is a sublevel set of $(W, \lambda_i', \phi_i')$ 
\end{remark}

Theorem \ref{thm: finitely_many_subdomains2} also holds for almost symplectomorphic Weinstein cobordisms with the same negative contact boundary $(Y^{2n-1}, \xi)$; again, we can take $W: = Stack_{W_{flex}}(W_1, \cdots, W_k)$. Here it is important that we glue along $W_{flex}$. If we glue along $(Y^{2n-1}, \xi)$ and form $ Stack_{(Y, \xi) }(W_1, \cdots, W_k)$, which is possible even without using Theorem \ref{thm: prev_crit_points}, the resulting domain will not be almost symplectomorphic to the original $W_i^{2n}$ (unless $W_i^{2n}$ are smoothly trivial). 
Although we do not have any counterexamples, it also seems unlikely that Theorem \ref{thm: finitely_many_subdomains2} holds for $n = 2$. This dimension differs from high dimensions in that there are 4-dimensional almost Weinstein domains that either have no Weinstein structures or have finitely many Weinstein structures; for $n > 2$, any almost Weinstein domain admits infinitely many different Weinstein structures \cite{CE12, MM}.
 
The domain $W^{2n}$ constructed in Theorem \ref{thm: finitely_many_subdomains2} depends on $W_1^{2n}, \cdots, W_k^{2n}$ (as well as other choices) and we do not know if there is a single  Weinstein domain containing \textit{all} almost symplectomorphic Weinstein domains as subdomains. 
However the next result, Corollary \ref{cor: infinite_type} from the Introduction, 
shows that there is a Weinstein \textit{manifold} containing all Weinstein domains as subdomains.
\begin{corollary}\label{cor: infinite_type2}
	For any almost Weinstein domain $(W^{2n},J), n\ge 3$, there exists a maximal Weinstein manifold $W_{max}^{2n}$ almost symplectomorphic to the completion of $(W, J)$ such that any Weinstein domain almost symplectomorphic to $(W, J)$ 
	is a  subdomain of $W^{2n}_{max}$. 
\end{corollary}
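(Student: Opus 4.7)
The plan is to iterate Theorem \ref{thm: finitely_many_subdomains2} and pass to a direct limit. First, I would observe that the collection of Weinstein homotopy classes of Weinstein structures almost symplectomorphic to $(W^{2n}, J)$ is countable: every such structure admits a finite Weinstein handle decomposition, and the attaching data (isotropic and Legendrian isotopy classes of framed spheres in the standard contact models on the intermediate boundaries) is determined by countable combinatorial information. Fix an enumeration $W_1^{2n}, W_2^{2n}, W_3^{2n}, \ldots$ of representatives of these Weinstein homotopy classes.

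Next, I would build an ascending chain of Weinstein domains $V_1^{2n} \subset V_2^{2n} \subset V_3^{2n} \subset \cdots$, all almost symplectomorphic to $(W, J)$, with each inclusion realized as a Weinstein subdomain whose complement is smoothly trivial. Set $V_1^{2n} := W_1^{2n}$. Given $V_k^{2n}$ already containing $W_1^{2n}, \ldots, W_k^{2n}$ as Weinstein subdomains, apply Theorem \ref{thm: finitely_many_subdomains2} to the pair of almost symplectomorphic Weinstein domains $\{V_k^{2n}, W_{k+1}^{2n}\}$ to produce $V_{k+1}^{2n}$, still almost symplectomorphic to $(W, J)$, containing both $V_k^{2n}$ and $W_{k+1}^{2n}$ as subdomains with smoothly trivial complements. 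After a Weinstein homotopy on $V_{k+1}^{2n}$, I may further arrange $V_k^{2n}$ to be a sublevel set of a Weinstein Morse function on $V_{k+1}^{2n}$; this makes the inclusions compatible with the Liouville forms and Morse data and hence assembles into a coherent direct system.

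Finally, define $W_{max}^{2n} := \bigcup_k V_k^{2n}$ with the Weinstein manifold structure induced by the colimit of the Liouville forms and the extended Morse functions; this is a Weinstein manifold of possibly infinite type. By construction, every representative $W_i^{2n}$ sits as a Weinstein subdomain of $V_i^{2n} \subset W_{max}^{2n}$, so every Weinstein domain almost symplectomorphic to $(W, J)$ embeds as a subdomain of $W_{max}^{2n}$ up to Weinstein homotopy. It remains to identify $W_{max}^{2n}$ with the completion $\widehat{W}$ up to almost symplectomorphism. Because each $V_{k+1}^{2n}\backslash V_k^{2n}$ is smoothly trivial, $W_{max}^{2n}$ is smoothly diffeomorphic to $V_1^{2n}$ with a cylindrical end attached, and hence to $\widehat{W}$. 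The main delicate point will be matching the almost complex structures: since each $V_k^{2n}$ is almost symplectomorphic to $W^{2n}$ and the smoothly trivial Weinstein cobordisms $V_{k+1}^{2n} \backslash V_k^{2n}$ contribute only cylindrical almost complex data, the inductive assembly yields an almost complex structure on $W_{max}^{2n}$ homotopic to the cylindrical extension of $J$ on any exhausting compact core, which is precisely the almost complex structure on $\widehat{W}$.
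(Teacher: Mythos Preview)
Your proposal is correct and takes essentially the same approach as the paper. The only cosmetic difference is that the paper performs a single infinite stacking $W_{max}^{2n} = Stack_{W_{flex}}(\{W_i\}_{i\ge 1})$ in one shot, whereas you iterate the finite version of Theorem~\ref{thm: finitely_many_subdomains2} to build the chain $V_1 \subset V_2 \subset \cdots$ and then take the colimit; by the associativity of stacking noted in Section~\ref{ssec: concatenate_stacking}, these two organizations produce the same Weinstein manifold, and both rely on the same countability observation and the same control of the almost symplectic type via smoothly trivial complementary cobordisms.
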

\begin{proof}
There are countably many Weinstein \textit{domain} structures  on any almost Weinstein domain. This is because each Weinstein domain has finitely many Weinstein handles and the handles are attached along isotropic spheres which admit finite-dimensional approximations. We can enumerate these Weinstein domains as $\{W_i\}_{i \ge 1}$. 
Then $W_{max}^{2n} = Stack_{W_{flex}}(\{W_i\}_{i\ge 1})$
is a Weinstein manifold
and, as before, it is almost symplectomorphic to (the completion of) $W_i$ and contains all $W_i$ as subdomains by construction.
\end{proof}
On the other hand,  there are open manifolds with uncountably many Weinstein manifold structures \cite{Abouzaid_Seidel}. Consequently we do not know if there is a maximal Weinstein manifold containing all other almost symplectomorphic Weinstein manifolds as submanifolds.

 Theorem \ref{thm: finitely_many_subdomains2} can be reformulated 
as follows: any two almost symplectomorphic Weinstein domains becomes symplectomorphic after attaching a smoothly trivial Weinstein cobordism to each. In fact, attaching a single $n$-Weinstein handle suffices.
  \begin{corollary}\label{cor: single_handle}
If $W_1^{2n},\cdots, W_k^{2n}, n \ge 3,$ are almost symplectomorphic Weinstein domains, then there are Legendrian spheres $\Lambda_i \subset \partial W_i^{2n}$ such that $W_i^{2n} \cup H_{\Lambda_i}^n$ are Weinstein homotopic.
 \end{corollary}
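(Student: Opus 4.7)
The plan is to combine Theorem \ref{thm: finitely_many_subdomains2} with a stabilization by one additional loose $n$-handle, and then use the $h$-principle for loose Legendrians to collapse the resulting cobordism above each $W_i^{2n}$ down to a single $n$-handle attachment.

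First, by Theorem \ref{thm: finitely_many_subdomains2}, I obtain an almost symplectomorphic Weinstein domain $W^{2n}$ which contains each $W_i^{2n}$ as a Weinstein subdomain with $C_i^{2n} := W^{2n}\setminus W_i^{2n}$ a smoothly trivial Weinstein cobordism. The reduction of smoothly trivial Weinstein cobordisms from \cite{Lazarev_critical_points} (the version already used in the proof of Theorem \ref{thm: stacking2}) allows me to Weinstein homotope each $C_i^{2n}$ rel boundary into a presentation with a single smoothly cancelling pair of handles, $C_i^{2n}\simeq H^{n-1}_i \cup H^n_{\eta_i}$, in which the Legendrian attaching sphere $\eta_i$ smoothly intersects the belt sphere of $H^{n-1}_i$ exactly once.

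I would then choose a stabilized loose Legendrian $n$-sphere $\mu\subset \partial W^{2n}$ contained in a Darboux ball disjoint from the skeleton of $W^{2n}$ and from the belt spheres of the $H^{n-1}_i$, and set $W^\ast := W^{2n}\cup H^n_\mu$. The main step is to show, for each $i$, that $W^\ast$ is Weinstein homotopic to $W_i^{2n}\cup H^n_{\Lambda_i}$ for some Legendrian $\Lambda_i \subset \partial W_i^{2n}$. Viewing $W^\ast$ as $W_i^{2n}\cup H^{n-1}_i \cup H^n_{\eta_i}\cup H^n_\mu$, I would first commute the two $n$-handles to a common level (possible since their attaching spheres can be arranged disjoint) and then perform a Weinstein handle slide of $H^n_{\eta_i}$ across $H^n_\mu$; this replaces the attaching sphere $\eta_i$ by the Legendrian connected sum $\eta_i\#\mu$, which is loose since looseness is preserved under Legendrian connected sum \cite{Murphy11}. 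The smooth cancelling condition with $H^{n-1}_i$ is preserved by the sliding, so $\eta_i\#\mu$ is a loose Legendrian that smoothly cancels the subcritical handle $H^{n-1}_i$. By the $h$-principle for loose Legendrians \cite{Murphy11, CE12}, I can then Legendrian isotope $\eta_i\#\mu$ to a suitable (stabilized) Legendrian that Weinstein-cancels $H^{n-1}_i$. Performing this cancellation eliminates both $H^{n-1}_i$ and the slid $n$-handle, leaving a single $n$-handle attached directly to $\partial W_i^{2n}$ along some Legendrian $\Lambda_i$.

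Running this argument for every $i$ with the same $\mu$ and the same ambient $W^\ast$ produces Legendrians $\Lambda_i\subset\partial W_i^{2n}$ such that all the Weinstein domains $W_i^{2n}\cup H^n_{\Lambda_i}$ are Weinstein homotopic to the single domain $W^\ast$. The main obstacle is the cancellation step: verifying that a loose Legendrian which smoothly cancels a subcritical handle can be Legendrian-isotoped to one that Weinstein-cancels it. This is precisely where the stabilization by the extra loose $n$-handle $H^n_\mu$ is essential, providing, via the handle slide, the flexibility needed to invoke Murphy's $h$-principle; without it, the original $\eta_i$ need not be loose and the pair $(H^{n-1}_i, H^n_{\eta_i})$ need not Weinstein-cancel, which is of course the whole reason $W\neq W_i$ to begin with.
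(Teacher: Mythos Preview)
Your proposal is correct and follows essentially the same strategy as the paper: apply Theorem~\ref{thm: finitely_many_subdomains2} to get $W$ with smoothly trivial $C_i = W\setminus W_i$, attach one extra $n$-handle to $W$, and then argue that the cobordism above each $W_i$ is Weinstein homotopic to a single $n$-handle. The only difference is in how that last reduction is carried out. The paper simply observes that $C_i \circ C$ (where $C$ is the extra handle) admits a \emph{smooth} Morse function with one index-$n$ critical point and then invokes the strengthened form of Theorem~\ref{thm: prev_crit_points} from \cite{Lazarev_critical_points}, which says a Weinstein cobordism admitting such a smooth Morse function admits a Weinstein Morse function with the same critical point data; in particular the extra Legendrian $\Lambda\subset\partial W$ need not be loose. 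Your argument instead unpacks this step by hand: you take $\mu$ loose, slide $\eta_i$ over $\mu$ to make it loose, and then cancel via the $h$-principle. This is exactly (a special case of) how the cited result in \cite{Lazarev_critical_points} is proven, so your approach is more self-contained but longer, while the paper's is a one-line appeal to a black box. One small point of phrasing: to commute $H^n_\mu$ down past $H^n_{\eta_i}$ you need $\mu$ disjoint from the \emph{belt sphere} (cocore boundary) of $H^n_{\eta_i}$, not just from the skeleton of $W$; this is of course easily arranged since $\mu$ lives in a small Darboux ball, but it is worth stating.
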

 \begin{remark}
 Of course $\Lambda_i$ depend on the $k$-tuple $W_1, \cdots, W_k$. For example, it seems impossible to fix $\Lambda_1$ a priori independent of $W_2, \cdots, W_k$.
 \end{remark}
 \begin{proof}[Proof of Corollary \ref{cor: single_handle}]
 By Theorem \ref{thm: finitely_many_subdomains2}, there are smoothly trivial Weinstein cobordisms $C_i$ such that $W_i \circ C_i$ are all Weinstein homotopic to a fixed Weinstein domain $W$. Let $\Lambda \subset \partial W$ be any Legendrian sphere and let $C$ be the Weinstein cobordism with $\partial_- C = \partial W$ given by attaching a Weinstein handle along $\Lambda$ to $W$.
 Then $W_i \circ C_i \circ C$ are all Weinstein homotopic to $W \circ C$. 
 The cobordism $C_i \circ C$ admits a smooth Morse function with a single critical point (of index $n$). Hence by 
 a stronger version of Theorem 
\ref{thm: prev_crit_points}  with control over the Weinstein Morse function  \cite{Lazarev_critical_points}, $C_i \circ C_0$ also admits a Weinstein presentation with a single handle (also of index $n$) and therefore is given by attaching a Weinstein handle to some Legendrian $\Lambda_i \subset \partial W_i$.
  Then $W_i \cup H^n_{\Lambda_i} = W_i \circ C_i \circ C_0$ are all Weinstein homotopic to $W  \circ C$ as desired.
  \end{proof}

Theorem \ref{thm: finitely_many_subdomains2} can be generalized to the case when $W_i$ have different topology. In this case, there is no natural choice for the smooth topology of the domain $W$ containing all $W_i$. Hence we will pick an arbitrary an almost Weinstein domain $X$ and construct a Weinstein domain $W$ almost symplectomorphic to $X$ that contains $W_i$. Of course, it is necessary that $W_i$ admit \textit{almost} Weinstein embeddings into $X$, i.e.  
$X\backslash W_i$ is an almost Weinstein cobordism. The following result shows that this necessary condition is sufficient. 
 \begin{corollary}\label{cor: subdomains_diff_top}
 If $W_1^{2n}, \cdots, W_k^{2n}, n \ge 3,$ are Weinstein domains that have almost Weinstein embeddings into an almost Weinstein domain $X^{2n}$, then there exists a Weinstein domain almost symplectomorphic to $X^{2n}$ containing $W_i^{2n}$ as Weinstein subdomains. 
 \end{corollary}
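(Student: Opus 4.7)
The natural plan is to reduce to Theorem \ref{thm: finitely_many_subdomains2} by first upgrading each almost Weinstein complement $X^{2n}\setminus W_i^{2n}$ to a genuine Weinstein cobordism, thereby producing a family of Weinstein domains that are all almost symplectomorphic to $X^{2n}$ and that already contain $W_i^{2n}$ as a Weinstein subdomain. Since $n \ge 3$ and we are free to choose \emph{flexible} Weinstein structures on the complements, the relative existence h-principle of \cite{Eli90, CE12} should supply these structures directly.

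More precisely, first I fix almost Weinstein embeddings $W_i^{2n}\hookrightarrow X^{2n}$. Viewing $X^{2n}\setminus W_i^{2n}$ as an almost Weinstein cobordism with negative contact boundary $\partial W_i^{2n}$ (carrying the contact structure induced by the given Weinstein structure on $W_i^{2n}$) and ambient almost complex structure inherited from $X^{2n}$, the existence h-principle yields a flexible Weinstein cobordism structure $C_i^{2n}$ on $X^{2n}\setminus W_i^{2n}$ with $\partial_- C_i^{2n} = \partial W_i^{2n}$. Setting $X_i^{2n} := W_i^{2n}\circ C_i^{2n}$, we obtain Weinstein domains whose underlying smooth manifold is $X^{2n}$, whose almost symplectic class is that of $(X^{2n}, J)$, and each of which contains $W_i^{2n}$ as a Weinstein subdomain by construction.

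Next, since $X_1^{2n}, \ldots, X_k^{2n}$ are almost symplectomorphic Weinstein domains in the almost symplectic class of $X^{2n}$, Theorem \ref{thm: finitely_many_subdomains2} produces a Weinstein domain $W^{2n}$ almost symplectomorphic to $X^{2n}$ such that each $X_i^{2n}$ is a Weinstein subdomain of $W^{2n}$ (with smoothly trivial complement). Transitivity of the Weinstein subdomain relation then gives the chain
\[
W_i^{2n} \hookrightarrow X_i^{2n} \hookrightarrow W^{2n},
\]
exhibiting each $W_i^{2n}$ as a Weinstein subdomain of $W^{2n}$, as required.

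The only nontrivial input beyond Theorem \ref{thm: finitely_many_subdomains2} is the relative h-principle guaranteeing that each almost Weinstein cobordism $X^{2n}\setminus W_i^{2n}$ admits an honest (flexible) Weinstein structure matching the fixed Weinstein boundary $\partial W_i^{2n}$; this is the step that genuinely uses $n\ge 3$, and it is where any potential obstacle lies. Once this relative extension is available, no further flexibility is needed: the rest is a formal application of the maximal-domain construction already established.
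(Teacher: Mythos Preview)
Your proposal is correct and follows essentially the same approach as the paper: apply the existence h-principle to put a flexible Weinstein structure on each complement $X^{2n}\setminus W_i^{2n}$, form $X_i^{2n} := W_i^{2n}\circ C_i^{2n}$, and then invoke Theorem \ref{thm: finitely_many_subdomains2} on the almost symplectomorphic family $X_1^{2n},\ldots,X_k^{2n}$. Your identification of the relative h-principle step as the only nontrivial input beyond Theorem \ref{thm: finitely_many_subdomains2}, and as the place where $n\ge 3$ is used, is also exactly right.
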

 \begin{remark}
 	The  domains $W_i$ do not need to have the same topology, unlike in Theorem \ref{thm: finitely_many_subdomains2}.  This result generalizes Theorem 6.4 of \cite{eliashberg_revisited}, which considers the case when $k = 1$ and $W_1^6$ is some Weinstein domain with a single index 3 critical point and $X = T^*S^3$. 
 \end{remark}
 \begin{proof}
 By Eliashberg's existence h-principle \cite{CE12} for Weinstein domains, $X\backslash W_i^{2n}$ has a flexible Weinstein structure which we denote $(X\backslash W_i^{2n})_{flex}$. Let $X_i := W_i^{2n} \circ (X\backslash W_i^{2n})_{flex}$. Then $X_i$ are almost symplectomorphic Weinstein structures on $X$ containing $W_i$ as Weinstein-subdomains.  By Theorem \ref{thm: finitely_many_subdomains2}, there exists a Weinstein structure on $X$ that contains all $X_i$ as Weinstein subdomains and hence $W_i$ as Weinstein subdomains. 
 \end{proof}
 
In the previous results Theorem \ref{thm: finitely_many_subdomains2} and Corollary \ref{cor: subdomains_diff_top}, we started with abstract Weinstein domains and constructed a Weinstein domain containing them as subdomains. Now we start with Weinstein subdomains of some Weinstein domain, perhaps produced via these previous results. We will assume that the subdomains are abstractly symplectomorphic but not symplectomorphic via a symplectomorphism of the ambient domain. The following result shows that in some sense these subdomains become symplectomorphic in a larger domain. 
 \begin{corollary}\label{cor: abstractly_symp}
Let $\phi_1, \cdots, \phi_k: W_0^{2n} \hookrightarrow W^{2n}, n \ge 3,$ be formally isotopic Weinstein embeddings. Then there is a Weinstein domain $X^{2n}$ almost symplectomorphic to $W^{2n}$ 
and formally isotopic Weinstein embeddings $\psi_1, \cdots, \psi_k: W^{2n} \hookrightarrow X^{2n}$ such that $\psi_1 \circ \phi_1 = \cdots = \psi_k \circ\phi_k$.  
 \end{corollary}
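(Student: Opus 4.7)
The plan is to view each Weinstein embedding $\phi_i$ as exhibiting $W$ as a concatenation $\phi_i(W_0) \circ C_i$ of the fixed subdomain $W_0$ with a Weinstein complementary cobordism $C_i := W \setminus \phi_i(W_0)$. Because the embeddings $\phi_i$ are formally isotopic, the complements $C_i$ are pairwise almost symplectomorphic Weinstein cobordisms, all with common negative contact boundary $\partial W_0$. The idea is to stack these complements along $W_0$ (equivalently, along $\partial W_0$, with $W_0$ as the distinguished bottom) to produce a single Weinstein domain $X$ that simultaneously extends all $k$ of the presentations $W_0 \circ C_i$.

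Concretely, first apply the cobordism version of Theorem \ref{thm: finitely_many_subdomains2} (the proof is identical, but starting with $(Y,\xi) = \partial W_0$ and stacking along $W_0$ via Theorem \ref{thm: stacking2}) to the collection $C_1, \dots, C_k$. Since the $C_i$ are almost symplectomorphic, the argument using Theorem \ref{thm: prev_crit_points} to split off the flexible parts produces smoothly trivial Weinstein cobordisms $D_i$ with $\partial_- D_i = \partial_+ C_i = \partial W$ such that the compositions $C_i \circ D_i$ are all Weinstein homotopic to a common Weinstein cobordism $C$ from $\partial W_0$ to some contact manifold. Then set
\[
X \;:=\; W_0 \circ C \;=\; Stack_{W_0}\bigl((W,\phi_1), \dots, (W,\phi_k)\bigr).
\]
For each $i$, the identification $C \simeq C_i \circ D_i$ yields $X \simeq W_0 \circ C_i \circ D_i = W \circ D_i$, where the first two factors are identified with $W$ via $\phi_i$. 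This identification defines a Weinstein embedding $\psi_i : W \hookrightarrow X$. Because $D_i$ is smoothly trivial, $X$ is almost symplectomorphic to $W$; and because the $D_i$ are all smoothly trivial Weinstein cobordisms with matching formal data (being pieces of the same $C$), the resulting $\psi_i$ are formally isotopic as Weinstein embeddings.

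The crucial commutativity $\psi_1 \circ \phi_1 = \dots = \psi_k \circ \phi_k$ is built into the stacking construction along a common subcobordism: by the very definition of $Stack_{W_0}$ recalled in Section \ref{ssec: concatenate_stacking}, the inclusion of $W_0$ into $Stack_{W_0}((W,\phi_1), \dots, (W,\phi_k))$ factors through each inclusion $\psi_i$ of $W$, and this factorization is precisely the canonical inclusion of $W_0$ as the bottom sublevel set of $X = W_0 \circ C$. Thus $\psi_i \circ \phi_i$ is, for every $i$, the one fixed canonical embedding $W_0 \hookrightarrow X$, as desired.

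The main obstacle is ensuring that the complements $C_i$ are genuinely almost symplectomorphic as cobordisms (and not merely almost diffeomorphic), so that the cobordism analog of Theorem \ref{thm: finitely_many_subdomains2} applies and produces smoothly trivial $D_i$. This is exactly what the formal isotopy of the $\phi_i$ delivers: a formal isotopy pulls back the almost complex data from $W$ to a homotopy of almost complex structures on the fixed cobordism $W \setminus \phi_0(W_0)$, which is the required almost symplectomorphism between the $C_i$. Once this step is secured, the remainder of the argument is formal and mirrors the proofs of Theorem \ref{thm: finitely_many_subdomains2} and Corollary \ref{cor: subdomains_diff_top}.
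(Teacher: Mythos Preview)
Your proposal is correct and follows essentially the same argument as the paper: define $C_i = W \setminus \phi_i(W_0)$, use formal isotopy of the $\phi_i$ to see that the $C_i$ are almost symplectomorphic cobordisms with common negative end $\partial W_0$, apply the cobordism version of Theorem~\ref{thm: finitely_many_subdomains2} to stack them into a single $C$, and set $X = W_0 \circ C$; the embeddings $\psi_i$ and the commutativity $\psi_i \circ \phi_i = \mathrm{const}$ then come straight from the stacking construction. The paper's write-up is slightly more explicit that the stacking of the $C_i$ is along $C_{flex}$ (so $X = Stack_{W_0 \circ C_{flex}}((W,\phi_i'))$), but this is exactly what your invocation of Theorem~\ref{thm: prev_crit_points} inside the cobordism version of Theorem~\ref{thm: finitely_many_subdomains2} amounts to.
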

 \begin{proof}
 Consider the almost symplectomorphic Weinstein cobordisms $C_i: = W^{2n} \backslash \phi_i(W_0^{2n})$. Using the symplectomorphism between $W_0$ and $\phi_i(W_0)$, we identify $\partial_- C_i = \partial(\phi_i(W_0))$
 with $\partial W_0$. Then we can construct the Weinstein cobordism $C := Stack_{C_{flex}}(C_1, \cdots, C_k)$ as in Theorem \ref{thm: finitely_many_subdomains2} and set $X^{2n}: = W_0^{2n} \circ C$; equivalently, $X^{2n} = Stack_{W_0 \circ C_{flex}}((W, \phi_1'), \cdots, (W, \phi_k'))$, where $\phi_i': W_0 \circ C_{flex} \hookrightarrow W$ is an extension of $\phi_i: W_0 \hookrightarrow W$.  Since $C$ is almost symplectomorphic to $C_i$, the domain $X$ is almost symplectomorphic to $W$. 
There are Weinstein embeddings $\psi_i: W \hookrightarrow X$ obtained by identifying the Weinstein homotopic domains $W$ and $W_0 \circ C_i$ and taking the embedding $W_0 \circ C_i \hookrightarrow W_0 \circ C = X$ induced by the inclusion $C_i \subset C = Stack_{C_{flex}}(C_1, \cdots, C_k)$. The embeddings $\psi_i$ are all formally isotopic since $W_i \hookrightarrow W$ are all formally isotopic. Finally,  $\psi_i \circ \phi_i: W_0 \hookrightarrow X$ is independent of $i$ since it coincides with the inclusion $W_0 \subset W_0 \circ C_i \subset W_0 \circ C$.  
 \end{proof}
\begin{remark} 
As in Theorem \ref{thm: finitely_many_subdomains2}, there are implicit Weinstein homotopies in  Corollary \ref{cor: abstractly_symp}. 
The statement that $\phi_i(W_0^{2n}) \subset W^{2n}$ are Weinstein subdomains involves Weinstein homotopies of $W^{2n}$. In addition, to define $\psi_i$, we identified the Weinstein homotopic domains $W_0 \circ C_i$ and $W$. So the maps $\psi_i$ have sources $W_0 \circ C_i$ that are \textit{Weinstein homotopic} to $W$ but are not actually $W$ itself. Therefore a more precise version of  Corollary \ref{cor: abstractly_symp} is that there are Weinstein domains $W_i= W_0 \circ C_i$ and subdomains $\phi_i': W_0 \hookrightarrow W_i$ such that the pair $(W_i, (W_0, \phi_i')) $ is Weinstein homotopic to $(W, (W_0, \phi_i))$ and  Weinstein embeddings $\psi_i': W_i \hookrightarrow X^{2n}$ such that the maps $\psi_i' \circ \phi_i' : W_0 \hookrightarrow X^{2n}$ all agree. 
\end{remark}

Since we stack the cobordisms $W \backslash \phi_i(W_0)$, 
Corollary \ref{cor: abstractly_symp} can be thought of as a relative version of Theorem \ref{thm: finitely_many_subdomains2} for Weinstein cobordisms.
In general the embeddings $\phi_i: W \hookrightarrow X$ will not be Hamiltonian isotopic for different $i$. Otherwise, there would be a single embedding $\psi: W \hookrightarrow X^{2n}$ such that $\psi\circ \phi_i: W_0 \hookrightarrow X^{2n}$ are Hamiltonian isotopic in $X^{2n}$ even though $\phi_i: W_0 \hookrightarrow W$ are not necessarily Hamiltonian isotopic in $W$. This is impossible in general; see Section \ref{ssec: domains_many_lag} for examples.
We also note that Corollary \ref{cor: abstractly_symp} can be generalized to allow Weinstein embeddings $\phi_i: W_0 \hookrightarrow W_i$, where $W_i$ are different  Weinstein domains; in this case, we would get Weinstein embeddings $\psi_i: W_i^{2n} \hookrightarrow X^{2n}$ such that $\psi_i \circ \phi_i$ all agree.

\subsection{Stacking operation}\label{ssec: stacking_operation}
 
Now we will present some more results on the stacking operation and elaborate on Theorem \ref{thm: finitely_many_subdomains2}. This operation takes in almost symplectomorphic Weinstein domains and produces an almost symplectomorphic Weinstein domain containing them as subdomains. For trivial reasons, any Weinstein domain can be obtained by stacking almost symplectomorphic Weinstein domains.
For example, if we view $W_{flex}$ as a trivial subdomain of $W_{flex} = W_{flex} \circ (Y, \xi) \times [0,1]$, then $Stack_{W_{flex}}(W_{flex} \circ (Y, \xi) \times [0,1], W)  = W$; even more trivially, we have $Stack_W(W, W) = W$. However stacking is not a well-defined operation; indeed, for any Weinstein domain, there are many Weinstein domains that contains it as a subdomain.
Precisely because  stacking is not well-defined, it is possible to produce interesting Weinstein domains 
by stacking just flexible domains. The following result 
 is a sort of converse to Theorem \ref{thm: finitely_many_subdomains2}. 
\begin{proposition}\label{prop: all_domains_realized}
Let $W_0^{2n}, n \ge 3,$ be a Weinstein subdomain of $W^{2n}$ such that $W^{2n} \backslash W_0^{2n}$ is smoothly trivial
and $\pi_1(W) = \pi_1(W_0) = 0$. Then $(W, W_0)$ is Weinstein homotopic to $(Stack_{W_{flex}}(W_0, W_{flex}), W_0)$. 
\end{proposition}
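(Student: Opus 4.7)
The plan is to realize $W$ as a stacking $Stack_{W_{flex}}(W_0, W_{flex})$ by carefully choosing the two Weinstein subdomain embeddings of $W_{flex}$ — one into $W_0$ (coming from Theorem \ref{thm: prev_crit_points}) and one into $W_{flex}$ itself (coming from a non-trivial Weinstein homotopy). First I would apply Theorem \ref{thm: prev_crit_points} to $W_0$ to obtain $W_0 \sim W_{flex} \circ X_0$ for some smoothly trivial Weinstein cobordism $X_0$. Since a Weinstein homotopy of a subdomain extends to the ambient domain, this also yields $W \sim W_{flex} \circ X_0 \circ C$, where $C = W \setminus W_0$ is the given smoothly trivial cobordism.

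The key step is to construct a Weinstein cobordism $D$ from $\partial W_{flex}$ such that simultaneously $Stack_{\partial W_{flex}}(X_0, D) \sim X_0 \circ C$ (so the total stacking recovers $W$) and $W_{flex} \circ D \sim W_{flex}$ (so that $D$ legitimately arises as the complement of a Weinstein homotopy embedding $W_{flex} \hookrightarrow W_{flex}$). I would build $D$ by pushing the attaching Legendrians of the handles of $C$, which initially live in $\partial W_0 = \partial(W_{flex} \circ X_0)$, smoothly down through $X_0$ to $\partial W_{flex}$, arranging disjointness from the attaching link of $X_0$. The Whitney-disk argument at the end of the proof of Theorem \ref{thm: stacking2} — relying crucially on $n \ge 3$ and $\pi_1(W_0) = \pi_1(W) = 0$ — supplies the required smooth isotopy, and the push-down preserves smooth triviality so that $D$ is itself smoothly trivial.

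Next I would apply Murphy's h-principle \cite{Murphy11} to Legendrian-isotope each pushed-down top-index attaching sphere to a loose Legendrian in $\partial W_{flex}$, making the resulting Weinstein cobordism $D$ flexible. The composition $W_{flex} \circ D$ is then flexible and almost symplectomorphic to $W_{flex}$, so the uniqueness h-principle for flexible Weinstein structures \cite{CE12} gives $W_{flex} \circ D \sim W_{flex}$, exhibiting $D$ as $W_{flex} \setminus W_{flex}$ for a non-trivial Weinstein homotopy embedding. By construction of $D$ via push-down, pushing $D$'s attaching spheres back up through $X_0$ recovers the Legendrian isotopy class of $C$'s attaching spheres, so that $Stack_{\partial W_{flex}}(X_0, D) \sim X_0 \circ C$. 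Combining,
\[
Stack_{W_{flex}}(W_0, W_{flex}) \;=\; W_{flex} \circ Stack_{\partial W_{flex}}(X_0, D) \;\sim\; W_{flex} \circ X_0 \circ C \;\sim\; W,
\]
with the subdomain $W_0 = W_{flex} \circ X_0$ preserved throughout the Weinstein homotopy; this gives the Weinstein homotopy of pairs.

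The main obstacle will be the push-down step: one must perform the Whitney-disk cancellations smoothly inside $X_0$ to relocate $C$'s attaching data into $\partial W_{flex}$ disjointly from the attaching data of $X_0$, and then upgrade the resulting smooth link to a Legendrian link with loose top-index components via Murphy's h-principle so that the flexibility uniqueness theorem applies. The hypotheses $n \ge 3$ and simple-connectedness enter at exactly these two points, and this is also where the proof fails in dimension $4$ or without the $\pi_1$ assumption.
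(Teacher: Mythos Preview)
Your overall shape --- write $W = W_{flex}\circ X_0 \circ C$ and then try to rearrange $X_0\circ C$ as $Stack_{\partial W_{flex}}(X_0, D)$ with $W_{flex}\circ D$ flexible --- matches the paper's strategy. The gap is in the construction of $D$. You propose to push $C$'s attaching Legendrians \emph{smoothly} down through $X_0$ to $\partial W_{flex}$, then invoke Murphy's h-principle to realize them as loose Legendrians, and then assert that ``pushing $D$'s attaching spheres back up through $X_0$ recovers the Legendrian isotopy class of $C$'s attaching spheres.'' This last claim does not follow. A smooth push-down followed by Murphy's \emph{existence} h-principle produces the loose Legendrian in the given formal class, not a Legendrian isotopic to the original; hence $Stack_{\partial W_{flex}}(X_0, D)$ will in general be $X_0 \circ C_{flex}$, not $X_0\circ C$, and you have lost the Weinstein homotopy type of $W$. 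The two demands you place on $D$ --- that it be flexible and that stacking with $X_0$ reproduce $X_0\circ C$ --- are in tension, and your construction satisfies the first at the expense of the second.

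The paper resolves this tension by a different mechanism. It first reduces (via the stronger form of Theorem~\ref{thm: prev_crit_points}) to two-handle cobordisms $C_0 = H^{n-1}_0\cup H^n_{\Lambda_0}$ and $C = H^{n-1}\cup H^n_{\Lambda}$, with all subcritical handles already attached at $\partial W_{flex}$. It then performs \emph{handle-slides} of $\Lambda$ over $\Lambda_0$: these are genuine Weinstein homotopies, so they preserve the Weinstein type of $W$, and (by results of \cite{Lazarev_critical_points}) can be arranged so that the slid Legendrian $\Lambda'$ becomes loose in the complement of the belt sphere of $H^{n-1}$ while having algebraic intersection $0$ with the belt of $H^{n-1}_0$. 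Only now does Murphy's \emph{uniqueness} h-principle enter, to upgrade a Whitney-trick smooth displacement of $\Lambda'$ off the belt of $H^{n-1}_0$ (this is where $\pi_1 = 0$ is used) to a Legendrian isotopy supported away from the belt of $H^{n-1}$. Every step is a Weinstein homotopy, so the end result is still $W$; and the displacement exhibits $W$ as a stacking with second factor $W_{flex}\cup H^{n-1}\cup H^n_{\phi(\Lambda')}$, which is flexible because $\phi(\Lambda')$ is loose. The essential idea you are missing is that looseness must be \emph{created} by a Weinstein-homotopy-preserving move (handle-slides), not imposed after the fact by Murphy's existence theorem.
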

\begin{remark}\label{rem: all_domains_realized}
We do not know whether the condition $\pi_1(W) =\pi_1(W_0)= 0$ is essential; it may be possible to drop this condition by
more carefully controlling the smooth isotopy class of the Legendrian attaching spheres. 
We also note that to construct the domain
$Stack_{W_{flex}}(W_0, W_{flex})$, we need an embedding $i: W_{flex} \hookrightarrow W_{flex}$. 
As we will see in the proof, the cobordism $W_{flex} \backslash i(W_{flex})$
is Weinstein homotopic to the trivial Weinstein cobordism without any critical points but does not actually coincide with this cobordism; namely  $W_{flex} \backslash i(W_{flex})$ has some symplectically cancelling handles. 
\end{remark}

\begin{proof}
As in the proof of Theorem \ref{thm: finitely_many_subdomains2}, we first present $W^{2n}_0$ as $W^{2n}_{flex} \circ C_0^{2n}$ for some smoothly trivial Weinstein cobordism $C_0^{2n}$. Also, let $C^{2n} := W^{2n} \backslash W_0^{2n}$, which is a smoothly trivial by assumption; so we have 
$W^{2n} = W_0^{2n} \circ C^{2n} = W_{flex}^{2n} \circ C_0^{2n} \circ C^{2n}$. The stronger version of Theorem \ref{thm: prev_crit_points} in \cite{Lazarev_critical_points} shows that $C_0^{2n}, C^{2n}$ both have Weinstein presentations with two handles. Namely, 
$C^{2n}_0 = H^{n-1}_0 \cup H^n_{\Lambda_0}, C^{2n} = H^{n-1} \cup H^n_{\Lambda}$,  and that the attaching spheres of $H^{n-1}_0, H^{n-1}$ are contained in a Darboux ball of $\partial W_{flex}^{2n}$; if $W_0^{2n} = W_{flex}^{2n}$, we take  $H^{n-1}_0, H^n_{\Lambda_0}$ to be symplectically cancelling. We view $\Lambda_0, \Lambda$ as Legendrians in $\partial (W_{flex}^{2n} \cup H^{n-1}_0 \cup H^{n-1})$. 

Since $C_0$ is smoothly trivial, $\Lambda_0$ has algebraic intersection one with the belt sphere of $H^{n-1}_0$. Furthermore, $\Lambda_0$ is disjoint from the belt sphere of $H^{n-1}$  since $H^{n-1}$ is part of the cobordism $W\backslash W_0$, which is attached after $H^{n}_{\Lambda_0}$. 
Similarly since $C^{2n}$ is also smoothly trivial, $\Lambda$ has algebraic intersection one with $H^{n-1}$. However, $\Lambda$ may have non-trivial intersection with $H^{n-1}_0$ since $H^n_{\Lambda}$ is attached after $H^{n-1}_0$. By  handle-sliding $\Lambda$ over $\Lambda_0$ possibly several times, we get a new Legendrian $\Lambda'$ such that the algebraic intersection  of $\Lambda'$ and the belt sphere of $H^{n-1}_0$ is zero (and the algebraic intersection of $\Lambda'$ with the belt sphere of $H^{n-1}$ is still one). We can also perform these handleslides so that $\Lambda' \subset \partial (W_{flex} \cup H^{n-1}_0\cup H^{n-1})$ is loose in the complement of the belt sphere of $H^{n-1}$; see \cite{Lazarev_critical_points} for the relationship between handle-slides and looseness. 
Since $\Lambda'$ has algebraic intersection number zero with the belt sphere of $H^{n-1}_0$ and everything is simply-connected, we can use the Whitney trick to obtain a smooth isotopy that displaces $\Lambda'$ from this sphere. Furthermore, we can use Whitney disks that are disjoint from the belt sphere of $H^{n-1}$ and hence assume that this smooth isotopy is supported away from the belt sphere of $H^{n-1}$. Since $\Lambda'$ is loose in the complement of this belt sphere (but not in the complement of $\Lambda_0$), there is a Legendrian isotopy of $\Lambda'$ that displaces $\Lambda'$ from the belt sphere of $H^{n-1}_0$ and is supported away from the belt sphere of $H^{n-1}$. We can extend this Legendrian isotopy to an ambient contact isotopy $\phi_t$ such that $\phi_t$ is supported away from the belt sphere of $H^{n-1}$; let $\phi: = \phi_1$. In particular, $\phi(\Lambda')$ is disjoint from the belt sphere of $H^{n-1}_0$ by construction and is loose in 
$\partial(W_{flex} \cup H^{n-1})$.
Furthermore, $\phi(\Lambda_0)$ is disjoint from the belt sphere of $H^{n-1}$ since $\Lambda_0$ was disjoint from this sphere and the contact isotopy $\phi_t$ is supported away from this sphere. 

Now we consider the domain 
$W_{flex} \cup H^{n-1}_0 \cup H^{n-1}_1 \cup H^n_{\phi(\Lambda_0)} \cup H^n_{\phi(\Lambda')}$. This domain is Weinstein homotopic to $W$ since it is obtained from the original presentation of $W$ by handle-slides and Legendrian isotopies of the attaching spheres. 
Furthermore, since $\Lambda_0, \Lambda'$ are disjoint from the belt spheres of $H^{n-1}_0, H^{n-1}$ respectively, this domain is $Stack_{W_{flex} } (W_{flex} \cup H^{n-1}_0 \cup H^n_{\phi(\Lambda_0)}, W_{flex} \cup H^{n-1} \cup H^n_{\phi(\Lambda') })$.  The first domain 
$W_{flex} \cup H^{n-1}_0 \cup H^n_{\phi(\Lambda_0)}$
is Weinstein homotopic to $W_0$ since the new attaching sphere $\phi(\Lambda_0)$ and the old attaching sphere $\Lambda_0$ differ just by Legendrian isotopy. Since the handle-slides are done on top of $W_0$, the inclusion 
$W_0 = W_{flex} \cup H^{n-1}_0  \cup H^n_{\phi(\Lambda_0)} \subset W = W_{flex} \cup H^{n-1}_0 \cup H^n_{\phi(\Lambda_0)} 
\cup H^{n-1} \cup H^n_{\phi(\Lambda')}$ coincides with the original embedding $W_0 \subset W$, i.e. 
the pair 
$(Stack_{W_{flex} } (W_{flex} \cup H^{n-1}_0 \cup H^n_{\phi(\Lambda_0)}, W_{flex} \cup H^{n-1} \cup H^n_{\phi(\Lambda') }), W_{flex} \cup H^{n-1}_0  \cup H^n_{\phi(\Lambda_0)})$ 
is Weinstein homotopic to 
$(W, W_0)$.
Also,
$W_{flex} \cup H^{n-1} \cup H^n_{\phi(\Lambda')}$ is almost symplectomorphic to $W_{flex}$ and $\phi(\Lambda')$ is loose; the former claim follows from the facts that $\phi(\Lambda')$ has algebraic intersection number one with the belt sphere of $H^{n-1}$ and $\pi_1(W) = 0$.
Therefore $W_{flex} \cup H^{n-1} \cup H^n_{\phi(\Lambda')}$ is Weinstein homotopic to $W_{flex}$. 
So 
$(Stack_{W_{flex} } (W_{flex} \cup H^{n-1}_0 \cup H^n_{\phi(\Lambda_0)}, W_{flex} \cup H^{n-1} \cup H^n_{\phi(\Lambda') }), W_{flex} \cup H^{n-1}_0  \cup H^n_{\phi(\Lambda_0)})$ is homotopic to both
$(Stack_{W_{flex}}(W_0, W_{flex}), W_0)$
 and $(W,W_0)$, which proves the claim.
\end{proof}

By taking $W_0^{2n}$ to be $W_{flex}^{2n}$ in Proposition \ref{prop: all_domains_realized}, we get the following result, Corollary \ref{cor: covering} from the Introduction. \begin{corollary}\label{cor: domain_flex_everything}
Any Weinstein domain $W^{2n}, n \ge 3,$ with $\pi_1(W) = 0$ is Weinstein homotopic to
$Stack_{W_{flex}}(W_{flex}^{2n}, W_{flex}^{2n})$.
In particular, 
there exist Weinstein subdomains $\phi_1, \phi_2: W_{flex}^{2n} \hookrightarrow W^{2n}$ such that 
 $\phi_1(W_{flex}^{2n}) \cup \phi_2(W_{flex}^{2n}) = W^{2n}$. 
\end{corollary}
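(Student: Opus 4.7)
The plan is to deduce this corollary directly from Proposition \ref{prop: all_domains_realized} by specializing the subdomain $W_0$ to $W_{flex}$, after first verifying that the hypotheses of that proposition are met. First I would invoke Theorem \ref{thm: prev_crit_points} to write the given Weinstein domain as $W \simeq W_{flex}^{2n} \circ C^{2n}$ for some smoothly trivial Weinstein cobordism $C^{2n}$. In particular, this exhibits $W_{flex}^{2n}$ as a Weinstein subdomain of $W^{2n}$ with smoothly trivial complement. Since $W$ and $W_{flex}$ are diffeomorphic (as the cobordism $C$ is smoothly trivial), the hypothesis $\pi_1(W) = 0$ transfers to $\pi_1(W_{flex}) = 0$, so both simple-connectedness conditions required by Proposition \ref{prop: all_domains_realized} are satisfied.

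Next I would apply Proposition \ref{prop: all_domains_realized} with $W_0^{2n} = W_{flex}^{2n}$. The proposition then produces a Weinstein homotopy of pairs $(W^{2n}, W_{flex}^{2n}) \simeq (Stack_{W_{flex}}(W_{flex}^{2n}, W_{flex}^{2n}), W_{flex}^{2n})$, which gives the first claim of the corollary. Note the remark following Proposition \ref{prop: all_domains_realized} about the embedding $W_{flex} \hookrightarrow W_{flex}$ needed to form the stacking: the complement is Weinstein homotopic to the trivial cobordism but built using symplectically cancelling handles, so the stacking is genuinely nontrivial and constitutes the full Weinstein structure of $W$.

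For the second claim, I would appeal to the general feature of the stacking construction recorded in Theorem \ref{thm: stacking2}: any Weinstein domain of the form $Stack_{W_0}(W_1, \ldots, W_k)$ is covered as a topological space by its constituent subdomains $W_i$ glued along $W_0$. Taking $k = 2$ and $W_1 = W_2 = W_{flex}^{2n}$, this produces two Weinstein embeddings $\phi_1, \phi_2: W_{flex}^{2n} \hookrightarrow Stack_{W_{flex}}(W_{flex}^{2n}, W_{flex}^{2n}) \simeq W^{2n}$ whose images cover $W^{2n}$.

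The main content, of course, is packaged in Proposition \ref{prop: all_domains_realized}, so there is essentially no obstacle in deducing the corollary itself beyond checking the hypothesis on $\pi_1$; the bulk of the work (handle-slides, Whitney trick, and use of looseness) is already discharged there. The one subtle point worth stating carefully in the write-up is that the two embeddings $\phi_1, \phi_2$ have sources which are only \emph{Weinstein homotopic} to $W_{flex}$ rather than literally equal to it, matching the convention used throughout the paper for Weinstein subdomains.
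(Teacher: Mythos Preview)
Your proposal is correct and follows exactly the paper's approach: the paper simply states that the corollary is obtained by taking $W_0^{2n} = W_{flex}^{2n}$ in Proposition~\ref{prop: all_domains_realized}, and you have spelled out the straightforward verification of hypotheses (via Theorem~\ref{thm: prev_crit_points}) and invoked the covering property from Theorem~\ref{thm: stacking2} for the second claim. Your additional care in noting that the sources of $\phi_1, \phi_2$ are only Weinstein homotopic to $W_{flex}$ matches the paper's own commentary following the corollary.
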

This result is possible precisely because the stacking operation depends on more than just the Weinstein homotopy type of the stacked domains. The extra data of $W$ is related to  the extra data needed to define 
$Stack_{W_{flex}}(W_{flex}^{2n}, W_{flex}^{2n}) = 
Stack_{W_{flex}}(
(W_{flex}^{2n}, i_1), (W_{flex}^{2n}, i_2))$, namely the linking of the attaching spheres of $W_{flex} \backslash i_1(W_{flex})$ and $W_{flex} \backslash i_2(W_{flex})$. This is why 
$W_{flex} \backslash i(W_{flex})$ is not the trivial Weinstein cobordism, as noted in Remark \ref{rem: all_domains_realized}.

In some sense, the flexible domain $W_{flex}^{2n}$ 
is the only domain that can appear  in the statement of Corollary \ref{cor: domain_flex_everything}.  If all Weinstein domains were homotopic to $Stack_{W_{flex}}(W_{flex}, W_0)$ or $Stack_{W_0}(W_0, W_0)$ for some fixed $W_0$,  then $W_0$ would have to be a subdomain of all Weinstein domains and hence have to be (sub)flexible.
However if $W_0$ is already a Weinstein subdomain of $W$, then such a result is indeed possible and a variation on Corollary \ref{cor: domain_flex_everything} 
shows that 
$(W, W_0)$ is Weinstein homotopic to
$(Stack_{W_0}(W_0, W_0), W_0)$. So if $W$ contains an almost symplectomorphic Weinstein subdomain $W_0$, then $W$ can be covered by two Weinstein embeddings of $W_0$ into $W$.

We do not know whether an analogous version of Proposition \ref{prop: all_domains_realized} holds for \textit{multiple} subdomains. That is, Theorem \ref{thm: finitely_many_subdomains2} shows that any finite collection of domains can be stacked to produce a domain containing them as subdomains but it is unclear whether the converse holds.
As we explained before, any domain obtained by stacking subdomains can be covered by those subdomains. Of course, two arbitrary subdomains will not cover the ambient Weinstein domain and hence we should not require the stacked domain to cover all of the ambient domain. 
\begin{question}\label{question: two_subdomains}
Suppose that $W$ contains subdomains $W_1, W_2$. Then is 
there a subdomain $V \subset W_i$ such that  $Stack_{V}(W_1, W_2) \subset W$ and  $W_i \subset 
Stack_{V}(W_1, W_2) \subset W$ coincide with the original Weinstein embeddings $W_i \subset W$?
\end{question}
The situation in Theorem \ref{thm: finitely_many_subdomains2} where we produce domains containing many subdomains is quite special. The subdomains are all sublevel sets for Weinstein Morse functions with the \textit{same} Liouville vector field. However, it is not clear that arbitrary subdomains of a given Weinstein domain should satisfy this property. 
Indeed,  for any two Weinstein subdomains $W_1, W_2 \subset W$ with this property, the answer to Question \ref{question: two_subdomains} is yes. If $W_i$ are sublevel sets for Weinstein Morse functions $\phi_i$ of $W$ with the same Liouville vector field $X$, then 
$\phi_1 + \phi_2$ and a smoothing of $\max\{\phi_1, \phi_2\}$ are also such Weinstein Morse functions. Then $\phi_1 + \phi_2$ has a sublevel set $V$ that is a common subdomain of $W_1, W_2$ and $\max\{\phi_1, \phi_2\}$ has a sublevel set $Stack_V(W_1, W_2)$, which is therefore a subdomain of $W$. 

If we have \textit{nested} subdomains $W_1 \subset W_2 \subset \cdots \subset W_k$, then clearly there exists a single Weinstein function $\phi$ (with a single Liouville vector field) and regular values $c_1 < \cdots < c_k$  of $\phi$ such that 
$\{\phi \le c_i\} = W_i$.
In this case, all these subdomains are obtained by stacking. 
\begin{corollary}
If $W_1^{2n} \subset W_2^{2n} \subset \cdots \subset W_k^{2n}, n \ge 3,$ are nested almost symplectomorphic domains such that $W_{i+1} \backslash W_i$ are smoothly trivial and $\pi_1(W_i) = 0$, then $W_i$ is Weinstein homotopic to 
$Stack_{W_{flex}}(W_1, W_{flex, 1}, \cdots, W_{flex,i-1})$, the stacking $W_1$ with $i-1$ copies of $W_{flex}$. Furthermore, $W_{i-1} \subset W_i$ corresponds to the natural inclusion of stacked domains. 
\end{corollary}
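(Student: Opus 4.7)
The plan is to proceed by induction on $i$, with Proposition \ref{prop: all_domains_realized} as the key inductive tool and the associativity property of stacking (noted after Theorem \ref{thm: stacking2}) as the glue.

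The base case $i=1$ is trivial. For the inductive step, suppose we have established that $(W_{i-1}, W_{i-2})$ is Weinstein homotopic, as a pair, to $(Stack_{W_{flex}}(W_1, W_{flex, 1}, \cdots, W_{flex, i-2}), W_{i-2})$, with the inclusion corresponding to the natural one on the stacked side. I would then apply Proposition \ref{prop: all_domains_realized} to the pair $(W_i, W_{i-1})$: the hypotheses hold since $W_i \setminus W_{i-1}$ is smoothly trivial by assumption, $\pi_1(W_i) = \pi_1(W_{i-1}) = 0$, and $W_i, W_{i-1}$ are almost symplectomorphic (so, \emph{a fortiori}, they share the same $W_{flex}$ by the uniqueness h-principle). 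This yields
\[
(W_i, W_{i-1}) \;\simeq\; (Stack_{W_{flex}}(W_{i-1}, W_{flex, i-1}), W_{i-1}),
\]
where $W_{flex, i-1}$ denotes a fresh copy of $W_{flex}$ used to stack.

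Next I would substitute the inductive hypothesis $W_{i-1} \simeq Stack_{W_{flex}}(W_1, W_{flex, 1}, \cdots, W_{flex, i-2})$ into the first slot. Using associativity of the stacking operation (the observation that $Stack_{(Y,\xi)}(U_1, \cdots, U_j, Stack_{(Y,\xi)}(V_1, \cdots, V_\ell))$ is Weinstein homotopic to $Stack_{(Y,\xi)}(U_1, \cdots, U_j, V_1, \cdots, V_\ell)$, and similarly for stacking over a common subdomain), we obtain
\[
W_i \;\simeq\; Stack_{W_{flex}}\bigl(Stack_{W_{flex}}(W_1, W_{flex,1}, \cdots, W_{flex,i-2}), \, W_{flex,i-1}\bigr) \;\simeq\; Stack_{W_{flex}}(W_1, W_{flex,1}, \cdots, W_{flex,i-1}),
\]
as required.

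The step I expect to be most delicate is tracking the inclusions through the Weinstein homotopies, so that in the final statement the subdomain $W_{i-1} \subset W_i$ truly corresponds to the natural inclusion of one stacked cobordism into a longer one. The two ingredients needed are: (a) Proposition \ref{prop: all_domains_realized} produces a homotopy of \emph{pairs}, so $W_{i-1}$ sits inside $Stack_{W_{flex}}(W_{i-1}, W_{flex,i-1})$ in the obvious way; and (b) the associativity of stacking likewise respects the sub-pair $Stack_{W_{flex}}(W_1, W_{flex,1}, \cdots, W_{flex,i-2}) \hookrightarrow Stack_{W_{flex}}(W_1, W_{flex,1}, \cdots, W_{flex,i-1})$. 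Combining (a) with the inductive hypothesis inside (b) gives the desired compatibility. The main obstacle is thus bookkeeping rather than geometry; no new symplectic input beyond Proposition \ref{prop: all_domains_realized} and the discussion of stacking in Section \ref{ssec: concatenate_stacking} is needed.
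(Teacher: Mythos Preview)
Your proposal is correct and follows essentially the same approach as the paper: induction using Proposition \ref{prop: all_domains_realized} to write $W_i \simeq Stack_{W_{flex}}(W_{i-1}, W_{flex})$, followed by the associativity of stacking noted after Theorem \ref{thm: stacking2}. Your treatment of the pair compatibility (the ``Furthermore'' clause) is more detailed than the paper's, but the underlying argument is the same.
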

\begin{proof}
By Proposition \ref{prop: all_domains_realized}, $W_{i+1} = Stack_{W_{flex}}(W_i, W_{flex})$ and so the result follows by induction and the fact that 
$Stack_{(Y, \xi)}(X_1, \cdots, X_i, Stack_{(Y, \xi)}(X_{i+1}, \cdots, X_{k}))$ is Weinstein homotopic to 
$Stack_{(Y, \xi)}(X_1, \cdots, X_i, X_{i+1}, \cdots, X_k)$ for any Weinstein cobordisms $X_i$.\end{proof}

\section{Stacking contact manifolds}\label{sec: maximal_contact}

\subsection{Maximal contact manifolds}\label{ssec: max_contact}
We now prove some contact analogs of the results in Section \ref{sec: maximal_weinstein_domains} for Weinstein domains.
To prove the existence of maximal Weinstein domains, we used Theorem \ref{thm: prev_crit_points}: any Weinstein domain can be decomposed into a fixed Weinstein domain (its flexiblization)  depending just on the smooth topology plus a smoothly trivial Weinstein cobordism. On the contact side, Casals, Murphy, and Presas \cite{CMP} proved that any contact manifold of dimension at least five is the positive end of a smoothly trivial Weinstein cobordism from a fixed (overtwisted) contact manifold. Using their result, we will prove that any finite collection of contact manifolds has a maximal element. We first assume that the contact manifolds are almost contactomorphic and prove the second claim in Theorem \ref{thm: contact_cobordisms}
from the Introduction.
We later remove this assumption in Corollary \ref{cor: contact_maximal_weinstein} below. 
 \begin{theorem}\label{thm: contact_cobordisms2}
If $(Y_1^{2n-1}, \xi_1), \cdots, (Y_k^{2n-1}, \xi_k), n \ge 3,$ are almost contactomorphic contact manifolds, then there exist smoothly trivial Weinstein cobordisms $C_i^{2n}$ such that $\partial_-C_i^{2n} = (Y_i^{2n-1}, \xi_i)$ and $\partial_+C_i^{2n}$ are all contactomorphic.
\end{theorem}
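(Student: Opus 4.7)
The plan is to combine the stacking construction (Theorem \ref{thm: stacking2}) with the minimality result of Casals, Murphy, and Presas for overtwisted contact structures. The overall strategy is to find a common negative end for a collection of Weinstein cobordisms that have $(Y_i^{2n-1}, \xi_i)$ as their positive ends, then stack these cobordisms to produce a common positive end.

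First, since the contact manifolds $(Y_i^{2n-1}, \xi_i)$ are all almost contactomorphic, they all lie in a single class $\mathfrak{Contact}(Y^{2n-1}, J)$ (after fixing diffeomorphisms of the underlying smooth manifolds and of the almost contact structures). In this almost contact class there is a distinguished overtwisted structure $(Y^{2n-1}, \xi_{ot})$, and by the theorem of Casals, Murphy, and Presas \cite{CMP} cited in the introduction, $(Y^{2n-1}, \xi_{ot})$ is a minimal element: for each $i$ there is a smoothly trivial Weinstein cobordism $W_i^{2n}$ with $\partial_- W_i = (Y^{2n-1}, \xi_{ot})$ and $\partial_+ W_i = (Y_i^{2n-1}, \xi_i)$. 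Thus we obtain a finite collection $W_1, \ldots, W_k$ of smoothly trivial Weinstein cobordisms sharing a common negative boundary.

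Now apply Theorem \ref{thm: stacking2} to $W_1, \ldots, W_k$ with $(Y, \xi) = (Y^{2n-1}, \xi_{ot})$. The theorem produces Weinstein cobordisms $C_i^{2n}$ with $\partial_- C_i^{2n} = \partial_+ W_i^{2n} = (Y_i^{2n-1}, \xi_i)$, such that all concatenations $W_i \circ C_i$ are Weinstein homotopic to a single Weinstein cobordism $W^{2n}$. Weinstein homotopic cobordisms have contactomorphic positive ends, so in particular $\partial_+ C_i = \partial_+(W_i \circ C_i) = \partial_+ W$ is the same contact manifold up to contactomorphism for every $i$, which is the desired conclusion. Moreover, since $W_i$ is smoothly trivial and $n \ge 3$, the last sentence of Theorem \ref{thm: stacking2} guarantees that each $C_i$ is smoothly trivial as well.

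The argument is essentially a direct splicing of two ingredients, so I do not anticipate a substantial obstacle in the proof. The one point that deserves a careful sentence of verification is the identification of the almost contact classes: one should check that the common overtwisted manifold $(Y^{2n-1}, \xi_{ot})$ can be chosen uniformly for all $(Y_i^{2n-1}, \xi_i)$ via the fixed almost contactomorphisms, so that the stacking construction is applied to cobordisms with \emph{literally} the same negative end (not merely contactomorphic ones). This is automatic from the \cite{BEMtwisted} classification of overtwisted structures by their almost contact homotopy class, but it is worth stating explicitly before invoking Theorem \ref{thm: stacking2}.
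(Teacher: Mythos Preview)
Your proof is correct and follows essentially the same route as the paper: use Casals--Murphy--Presas to obtain smoothly trivial Weinstein cobordisms from a common overtwisted structure to each $(Y_i,\xi_i)$, identify the overtwisted negative ends via \cite{BEMtwisted}, and then apply the stacking construction (Theorem~\ref{thm: stacking2}) to obtain the $C_i$. The paper's proof is organized identically, including the explicit invocation of \cite{BEMtwisted} to justify that the overtwisted negative ends literally coincide.
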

\begin{proof}
Let $(Y, \xi_{ot,i})$ be an overtwisted contact structure in the 
almost contact class defined by $(Y_i, \xi_i)$. 
Casals, Murphy, and Presas \cite{CMP} showed that there is a smoothly trivial Weinstein cobordism $X_i$ from $(Y, \xi_{ot,i})$ to $(Y_i, \xi_i)$. 
Almost contactomorphic overtwisted contact structures are actually contactomorphic \cite{BEMtwisted} and so we can identify $(Y, \xi_{ot,i})$ with a fixed contact structure 
$(Y, \xi_{ot})$.
Since $\partial_-X_i = (Y, \xi_{ot})$ all agree, we can use Theorem \ref{thm: stacking2} to form the Weinstein cobordism $X:=Stack_{(Y, \xi_{ot})}(X_1, \cdots, X_k)$ with $\partial_-X = (Y, \xi_{ot})$. Furthermore, 
Theorem \ref{thm: stacking2} provides Weinstein cobordisms $C_i$ with $\partial_- C_i = \partial_+ X_i$ such that  $X_i \circ C_i$ is Weinstein homotopic to $X$. In particular, $\partial_+C_i = \partial_+X$ for all $i$.
Since the $X_i$'s are smoothly trivial, by Theorem \ref{thm: stacking2} so are the $C_i$'s.
\end{proof}

Theorems \ref{thm: contact_cobordisms2} fails for $n =2$ and so the restriction $n \ge 3$ is necessary. For example, Seiberg-Witten theory can be used to show that there is no smoothly trivial 4-dimensional Weinstein cobordism from an overtwisted contact structure to a fillable contact structure  \cite{Mrowka_concordance_overtwisted}. If $(Y_1, \xi_1)$ is overtwisted and $(Y_2, \xi_2)$ is fillable, then a 4-dimensional version of Theorem \ref{thm: contact_cobordisms} would provide smoothly trivial Weinstein cobordisms from $(Y_1, \xi_1), (Y_2, \xi_2)$ to $(Y, \xi)$, which would be fillable via the filling of  $(Y_2, \xi_2)$. However a weaker version of Theorem \ref{thm: contact_cobordisms2} does hold in this dimension. 
\begin{theorem}\label{thm: contact_cob_dimension4}
If $(Y_1^{3}, \xi_1), \cdots, (Y_k^{3}, \xi_k)$ are almost contactomorphic contact manifolds, then there are Weinstein cobordisms $C_i^4$
with $k-1$ Weinstein 2-handles and no Weinstein $0,1$-handles
such that $\partial_-C_i^4 = (Y_i, \xi_i)$ and  $\partial_+C_i^4$ are all contactomorphic. 
\end{theorem}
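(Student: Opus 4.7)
The plan is to parallel the argument for Theorem~\ref{thm: contact_cobordisms2}, replacing the Casals--Murphy--Presas cobordism (whose smoothly trivial version fails in dimension 3 by Mrowka's Seiberg--Witten obstruction) with an open-book--based construction tailored to dimension 3. The aim is to produce a common ``source'' contact 3-manifold $(Y_0, \xi_0)$ together with, for each $i$, a single Legendrian knot $\Lambda_i \subset (Y_0, \xi_0)$ whose Legendrian surgery yields $(Y_i, \xi_i)$. Given such data, the Weinstein cobordism $X_i := (Y_0, \xi_0) \cup H^2_{\Lambda_i}$ consists of exactly one 2-handle and no 0- or 1-handles, and applying the stacking construction (Theorem~\ref{thm: stacking2}) to $X_1, \ldots, X_k$ produces Weinstein cobordisms $C_i$ with $\partial_- C_i = (Y_i, \xi_i)$ and a common contactomorphic positive end. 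Since stacking cobordisms that consist only of 2-handles produces a cobordism consisting only of 2-handles, and since $C_i$ absorbs precisely the 2-handles of the $X_j$ for $j \neq i$, each $C_i$ will have exactly $k-1$ Weinstein 2-handles and no 0- or 1-handles.

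To construct the common source I would use the Giroux correspondence. Choose supporting open books $(\Sigma_i, \phi_i)$ for each $(Y_i, \xi_i)$ with $\phi_i$ a product of positive Dehn twists. After iterated positive stabilization, arrange that all open books have a common page $\Sigma$; the monodromies $\phi_i \in \mathrm{MCG}(\Sigma)$ then represent mutually almost contactomorphic contact structures. The goal is to find factorizations $\phi_i = \phi_0 \cdot \tau_i$ sharing a long common prefix $\phi_0$, with each $\tau_i$ a single positive Dehn twist along a simple closed curve $\gamma_i \subset \Sigma$. Setting $(Y_0, \xi_0)$ to be the contact 3-manifold supported by $(\Sigma, \phi_0)$, and letting $\Lambda_i$ be the Legendrian push-off of $\gamma_i$ on a page, then yields the desired single-handle Weinstein cobordisms $X_i$ from $(Y_0, \xi_0)$ to $(Y_i, \xi_i)$.

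The main obstacle is this last reduction step: showing that almost contactomorphic contact 3-manifolds admit supporting open books whose monodromies differ by exactly one positive Dehn twist after common stabilization. This is the essential dimension-3 substitute for Casals--Murphy--Presas, and it is subtle because one cannot simply pass to a common overtwisted source via smoothly trivial cobordisms. If a clean single-Dehn-twist factorization is not directly available, the fallback is to apply stacking to multi-handle $X_i$'s and then invoke handle-slides and cancellations within the resulting $C_i$ (in the spirit of Proposition~\ref{prop: all_domains_realized}) to collapse the 2-handle count down to exactly $k-1$; in either case, the critical point is that all handles remain of index 2, which is why the conclusion of Theorem~\ref{thm: contact_cob_dimension4} is genuinely weaker than that of Theorem~\ref{thm: contact_cobordisms2}.
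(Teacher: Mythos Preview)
Your high-level strategy is right: build single-$2$-handle Weinstein cobordisms $X_i$ from a common contact $3$-manifold $(Y_0,\xi_0)$ to each $(Y_i,\xi_i)$, then stack. But your proposed construction of $(Y_0,\xi_0)$ via open books has a real gap, and the paper's route is both simpler and avoids it.

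First, you assume each $(Y_i,\xi_i)$ admits a supporting open book whose monodromy is a product of \emph{positive} Dehn twists. That is equivalent to Stein fillability and is false for general contact $3$-manifolds (e.g.\ overtwisted ones), so the hypothesis of the theorem does not guarantee it. Second, even granting positive factorizations, the ``single Dehn twist difference after common stabilization'' step is, as you yourself note, unproven; your fallback of multi-handle $X_i$'s followed by cancellations does not obviously terminate with exactly $k-1$ index-$2$ handles, since in dimension $4$ you cannot create or cancel $2$-handles against $1$- or $3$-handles without introducing handles of those indices.

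The paper bypasses all of this with a one-line trick: for each $i$, take $\Lambda_i \subset (Y_i,\xi_i)$ to be a once-stabilized Legendrian unknot in a Darboux ball. Then $(Y_i,\xi_i)\cup H^2_{\Lambda_i}[+1]$ is overtwisted (Eliashberg), and since the $\Lambda_i$ are formally isotopic these overtwisted manifolds are almost contactomorphic, hence contactomorphic to a fixed $(Y',\xi_{ot})$ by Eliashberg's $3$-dimensional classification. Reversing the anti-surgery gives a Weinstein cobordism $X_i^4$ from $(Y',\xi_{ot})$ to $(Y_i,\xi_i)$ with exactly one $2$-handle, and now your stacking argument goes through verbatim. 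The point you missed is that in dimension $3$ one can still reach a common overtwisted source, just not by a \emph{smoothly trivial} cobordism---one extra $2$-handle (coming from the $+1$ surgery) is the price, and this is exactly why the conclusion is weaker than in Theorem~\ref{thm: contact_cobordisms2}.
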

\begin{proof}
We will  use a modified version of the Casals-Murphy-Presas result that holds in dimension 3. Namely, if $\Lambda_i \subset (Y_i^3, \xi_i)$ is a once stabilized Legendrian unknot contained in a Darboux chart, then $(Y_i^3, \xi_i) \cup H^2_{\Lambda_i}[+1]$, the result of doing $+1$ contact surgery along $\Lambda_i$, is overtwisted  \cite{Eliashberg_20_years}. 
Furthermore, since 
the $\Lambda_i$ are formally Legendrian isotopic, the 
$(Y_i, \xi_i)\cup H^2_{\Lambda_i}[+1]$ are all almost contactomorphic. Hence by Eliashberg's h-principle \cite{Eliashberg_overtwisted3} for overtwisted contact structures in dimension 3,  $(Y_i, \xi_i)\cup H^2_{\Lambda_i}[+1]$ are all contactomorphic
to a fixed $(Y', \xi_{ot})$. Since 
$(Y_i, \xi_i)\cup H^2_{\Lambda_i}[+1]$ are obtained from $(Y_i, \xi_i)$ via anti-surgery, 
there are Weinstein cobordisms $X_i^4$ with a single Weinstein 2-handle 
such that $\partial_- X_i^4 = (Y', \xi_{ot})$ and $\partial_+X_i^4 = (Y^3, \xi_i)$.
Now we can proceed as in the proof of Theorem \ref{thm: contact_cobordisms2} and form the cobordisms $X^4$ and $C_i^4$. Note that $C_i^4 := Stack_{(Y', \xi_{ot})}(X_1^4, \cdots, X_k^4) \backslash X_i^4$ has $k-1$ Weinstein 2-handles and no $0,1$-handles.
\end{proof}

\begin{remark}
Theorem \ref{thm: contact_cob_dimension4} is actually a bit stronger than stated. 
The proof shows that we only need
$(Y_i^3, \xi_i)\cup H^2_{\Lambda_i}[+1]$, but not necessarily $(Y_i^3, \xi_i^3)$,  to be almost contactomorphic; as a result, the $Y_i$'s need not be diffeomorphic.
\end{remark}

By using Theorem \ref{thm: contact_cobordisms2} and Theorem \ref{thm: contact_cob_dimension4}, we get the contact analog of 
Corollary \ref{cor: single_handle}. 
\begin{corollary}\label{cor:contact_single_handle}
	If $(Y_1^{2n-1}, \xi_1), \cdots, (Y_k^{2n-1}, \xi_k), n \ge 3,$ are almost contactomorphic, then there exist Legendrian spheres $\Lambda_i^{n-1} \subset (Y_i^{2n-1}, \xi_i)$ such that $(Y_i^{2n-1}, \xi_i) \cup  H^n_{\Lambda_i}$ are contactomorphic. If $n = 2$, then the same holds for $k = 2$. 
\end{corollary}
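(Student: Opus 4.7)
The plan is to mirror the argument of Corollary \ref{cor: single_handle} on the contact side, using Theorem \ref{thm: contact_cobordisms2} (respectively Theorem \ref{thm: contact_cob_dimension4}) in place of Theorem \ref{thm: finitely_many_subdomains2}. For $n \ge 3$, I would first apply Theorem \ref{thm: contact_cobordisms2} to the almost contactomorphic collection $(Y_1^{2n-1}, \xi_1), \ldots, (Y_k^{2n-1}, \xi_k)$ to obtain smoothly trivial Weinstein cobordisms $C_i^{2n}$ with $\partial_- C_i^{2n} = (Y_i^{2n-1}, \xi_i)$ and a common contactomorphic positive boundary which we may identify with a fixed contact manifold $(Y', \xi')$.

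Next, pick any Legendrian sphere $\Lambda' \subset (Y', \xi')$ and let $C^{2n}$ be the Weinstein cobordism obtained by attaching a single index-$n$ Weinstein handle to $(Y', \xi')$ along $\Lambda'$. The concatenated cobordisms $C_i^{2n} \circ C^{2n}$ are all Weinstein cobordisms from $(Y_i^{2n-1}, \xi_i)$ to the same contact manifold $\partial_+ C^{2n}$, and smoothly each $C_i \circ C$ is trivial except for a single $n$-handle. Applying the stronger version of Theorem \ref{thm: prev_crit_points} from \cite{Lazarev_critical_points} that gives control over the Weinstein Morse function, each $C_i \circ C$ admits a Weinstein presentation with exactly one critical point, necessarily of index $n$. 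Thus $C_i \circ C$ is Weinstein homotopic to the attachment of a single Weinstein $n$-handle along some Legendrian sphere $\Lambda_i \subset (Y_i^{2n-1}, \xi_i)$, so $(Y_i^{2n-1}, \xi_i) \cup H^n_{\Lambda_i}$ is contactomorphic to $\partial_+ C^{2n}$ for every $i$, proving the claim in high dimensions.

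For $n = 2$ and $k = 2$, Theorem \ref{thm: contact_cob_dimension4} is directly applicable and already supplies the required presentation: the cobordisms $C_i^4$ it produces have exactly $k - 1 = 1$ Weinstein 2-handle and no $0,1$-handles. Therefore each $C_i^4$ is simply the attachment of a single Weinstein 2-handle to $(Y_i^3, \xi_i)$ along some Legendrian $\Lambda_i \subset (Y_i^3, \xi_i)$, and the contactomorphism of the positive boundaries guaranteed by Theorem \ref{thm: contact_cob_dimension4} is exactly the assertion that $(Y_i^3, \xi_i) \cup H^2_{\Lambda_i}$ are contactomorphic.

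The only non-routine ingredient is the appeal to the refined version of Theorem \ref{thm: prev_crit_points} in the case $n \ge 3$: one must know that a Weinstein cobordism which is smoothly a single $n$-handle admits a Weinstein structure with a single critical point of index $n$. This is the main technical obstacle, and it is the same mechanism used in the proof of Corollary \ref{cor: single_handle}, so no new input is needed beyond the results of \cite{Lazarev_critical_points}. Note also that the case $n = 2$, $k \ge 3$ is genuinely excluded by the statement, since Theorem \ref{thm: contact_cob_dimension4} produces $k - 1$ handles there and we have no analog of the handle-reduction theorem in dimension four.
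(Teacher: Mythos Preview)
Your proposal is correct and matches the paper's intended argument exactly: the paper simply states that the result is ``the contact analog of Corollary~\ref{cor: single_handle}'' obtained by using Theorem~\ref{thm: contact_cobordisms2} and Theorem~\ref{thm: contact_cob_dimension4}, and you have spelled out precisely that analog, including the appeal to the handle-reduction result from \cite{Lazarev_critical_points} for $n\ge 3$ and the direct observation that $k-1=1$ handle suffices for $n=2$, $k=2$.
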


Now we apply Corollary
\ref{cor:contact_single_handle} to the case when  $(Y, \xi)$ has an almost Weinstein filling  $(W,J)$ and prove Corollary \ref{cor: contact_surgery_presentation} from the Introduction.
\begin{corollary}\label{cor: contact_surgery_presentation2}
If $(Y^{2n-1}, \xi), n \ge 3,$ has an almost Weinstein filling, then $(Y^{2n-1}, \xi)$ can be obtained from $(S^{2n-1}, \xi_{std})$ by a sequence of contact surgeries (of any index at most $n$) and a single contact anti-surgery (of index $n$). 
\end{corollary}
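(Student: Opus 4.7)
The idea is to reduce to the flexible case via Corollary \ref{cor:contact_single_handle} and then exploit the fact that a flexible Weinstein filling immediately yields a contact surgery presentation of its boundary starting from the standard sphere. Let $(W^{2n},J)$ be an almost Weinstein filling of $(Y^{2n-1},\xi)$.

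First, I would invoke Eliashberg's existence h-principle \cite{Eli90} to promote $(W,J)$ to a genuine flexible Weinstein filling $W_{flex}^{2n}$ on the same smooth manifold. Its contact boundary $(Y,\xi_{flex}):=\partial W_{flex}$ is almost contactomorphic to $(Y,\xi)$, since $W$ and $W_{flex}$ are almost symplectomorphic and the induced almost contact structure on the boundary depends only on the restriction of $J$. The Weinstein handle decomposition of $W_{flex}$, built on top of the standard $0$-handle $B^{2n}_{std}$ with boundary $(S^{2n-1},\xi_{std})$ by attaching Weinstein handles of index at most $n$, exhibits $(Y,\xi_{flex})$ as the result of a sequence of contact surgeries of index at most $n$ on $(S^{2n-1},\xi_{std})$.

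Next, I would apply Corollary \ref{cor:contact_single_handle} (which comes from Theorem \ref{thm: contact_cobordisms2} via the stacking construction) to the two almost contactomorphic structures $(Y,\xi)$ and $(Y,\xi_{flex})$: this produces Legendrian spheres $\Lambda\subset (Y,\xi)$ and $\Lambda_{flex}\subset (Y,\xi_{flex})$ such that the index-$n$ contact surgeries $(Y,\xi)\cup H^n_{\Lambda}$ and $(Y,\xi_{flex})\cup H^n_{\Lambda_{flex}}$ are contactomorphic; call this common contact manifold $(Y',\xi')$. Appending the surgery along $\Lambda_{flex}$ to the presentation of $(Y,\xi_{flex})$ from the previous paragraph produces a presentation of $(Y',\xi')$ as a sequence of contact surgeries of index at most $n$ starting from $(S^{2n-1},\xi_{std})$.

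Finally, since contact anti-surgery of index $n$ is the inverse operation to index-$n$ contact surgery, $(Y,\xi)$ is recovered from $(Y',\xi')$ by a single index-$n$ contact anti-surgery along the Legendrian belt sphere of the handle $H^n_{\Lambda}$. Concatenating this single anti-surgery with the surgery presentation of $(Y',\xi')$ yields the desired description of $(Y,\xi)$. There is no real obstacle once the ingredients are in place: all the substantial work is hidden in the existence h-principle for flexible Weinstein structures and in Corollary \ref{cor:contact_single_handle}, which in turn relies on the stacking Theorem \ref{thm: contact_cobordisms2} and the Casals--Murphy--Presas result that any high-dimensional contact manifold is cobordant to an overtwisted one via a smoothly trivial Weinstein cobordism. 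The only point requiring care is verifying that the almost contact structures of $(Y,\xi)$ and $(Y,\xi_{flex})$ agree, which, as noted above, is automatic from the existence h-principle.
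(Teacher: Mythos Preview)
Your proof is correct and follows essentially the same approach as the paper's own proof: both apply Eliashberg's existence h-principle to obtain a flexible Weinstein filling with almost contactomorphic boundary, invoke Corollary~\ref{cor:contact_single_handle} to equate the two contact structures after a single index-$n$ surgery on each, and then undo the surgery on the $(Y,\xi)$ side by an anti-surgery along the belt sphere. The notation differs but the logical structure is identical.
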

\begin{proof}
Suppose that $(Y^{2n-1}, \xi)$ has an almost Weinstein filling $(W^{2n}, J)$. By Eliashberg's existence h-principle \cite{Eli90} (which holds for $n \ge 3$), there is a flexible Weinstein structure almost symplectomorphic to $(W^{2n}, J)$, which we will also denote by $W^{2n}$. Then $(Y^{2n-1}, \xi)$ and $\partial W^{2n}$ are almost contactomorphic and so by Corollary \ref{cor:contact_single_handle}, there are Legendrians $\Lambda_1 \subset (Y^{2n-1}, \xi), 
\Lambda_2 \subset \partial W^{2n}$ such that 
$(Y^{2n-1}, \xi) \cup H^n_{\Lambda_1} = \partial W \cup H^n_{\Lambda_2}$. Let $\Lambda_1' \subset (Y^{2n-1}, \xi) \cup H^n_{\Lambda_1}$ be the belt sphere of the Weinstein handle $H^n_{\Lambda_1}$ so that 
$(Y^{2n-1}, \xi) = (Y^{2n-1}, \xi) \cup H^n_{\Lambda_1} \cup 
H^n_{\Lambda_1'}[+1]$. We can also view $\Lambda_1'$ as a Legendrian of $\partial W \cup H^n_{\Lambda_2}$ by the previous identification. Then $(Y^{2n-1}, \xi) = \partial W \cup H^n_{\Lambda_2} \cup H^n_{\Lambda_1'}[+1]$. Since $\partial W$ has a Weinstein filling $W^{2n}$, it is obtained  from $(S^{2n-1}, \xi_{std})$ by contact surgery. Therefore 
$\partial W \cup H^n_{\Lambda_2} \cup H^n_{\Lambda_1'}[+1]$ is obtained from 
 $(S^{2n-1}, \xi_{std})$ by contact surgery and a single contact anti-surgery (of index $n$). 
\end{proof}

There is also an analog of Corollary \ref{cor: subdomains_diff_top} for contact manifolds with different topology. 
\begin{corollary}\label{cor: contact_cobordisms_diff_top}
Suppose that $(Y_1^{2n-1}, \xi_1), \cdots, (Y_k^{2n-1}, \xi_k), n \ge 3,$ are contact manifolds, not necessarily with the same topology, and $X_i$ are almost Weinstein cobordisms from $(Y_i, \xi_i)$ to some fixed almost contact manifold $(Y, J)$. Then there are Weinstein cobordisms $Z_i$ almost symplectomorphic to $X_i$ such $\partial_- Z_i = (Y_i, \xi_i)$ and $\partial_+ Z_i$ are all contactomorphic. 
\end{corollary}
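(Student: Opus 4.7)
The plan is to reduce the statement to Theorem \ref{thm: contact_cobordisms2} by first genuinely Weinsteinizing each almost Weinstein cobordism $X_i$ while keeping its negative end fixed, and then stacking suitable Weinstein fillings on top to force the positive ends to coincide.

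First I would apply the relative existence h-principle for flexible Weinstein cobordisms (Eliashberg \cite{Eli90}, see also \cite{CE12}) to each $X_i$. Since $X_i$ is an almost Weinstein cobordism with fixed contact negative boundary $(Y_i, \xi_i)$ and $n \ge 3$, this h-principle produces a genuine flexible Weinstein cobordism $X_i'$, almost symplectomorphic to $X_i$ relative to the negative boundary, with $\partial_- X_i' = (Y_i, \xi_i)$ and $\partial_+ X_i'$ some genuine contact structure lying in the almost contact class $(Y, J)$. Call these positive ends $(Y, \xi_i')$.

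Next, since all the $(Y, \xi_i')$ lie in the common almost contact class $(Y, J)$, they are pairwise almost contactomorphic. Therefore we can invoke Theorem \ref{thm: contact_cobordisms2} to obtain smoothly trivial Weinstein cobordisms $C_i$ with $\partial_- C_i = (Y, \xi_i')$ and such that all the $\partial_+ C_i$ are mutually contactomorphic to a single contact manifold $(Y', \xi')$. Setting $Z_i := X_i' \circ C_i$, concatenation yields Weinstein cobordisms with $\partial_- Z_i = (Y_i, \xi_i)$ and $\partial_+ Z_i = (Y', \xi')$ for all $i$. Because $C_i$ is smoothly trivial, $Z_i$ is smoothly (and in fact almost symplectically) equivalent to $X_i'$, which in turn is almost symplectomorphic to $X_i$, giving the required conclusion.

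The main obstacle in this plan is the first step: correctly invoking the h-principle in its \emph{relative} form so that the contact structure on the negative boundary is preserved as $(Y_i, \xi_i)$, rather than being replaced by some other representative of the almost contact class. This is exactly the content of the relative flexible h-principle in \cite{CE12}, applied to the cobordism $X_i$ with boundary condition on $\partial_- X_i$; once this is in hand, the rest of the argument is a direct concatenation with the output of Theorem \ref{thm: contact_cobordisms2}. No additional control over the smooth topology of $C_i$ beyond smooth triviality is needed, which is precisely what Theorem \ref{thm: contact_cobordisms2} provides in the almost contactomorphic setting.
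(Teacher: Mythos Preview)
Your proposal is correct and follows essentially the same approach as the paper: first apply Eliashberg's existence h-principle to replace each almost Weinstein cobordism $X_i$ by a flexible Weinstein cobordism $X_{i,flex}$ with the same negative contact boundary $(Y_i,\xi_i)$, then observe that the positive ends $\partial_+ X_{i,flex}$ are all almost contactomorphic to $(Y,J)$ and invoke Theorem~\ref{thm: contact_cobordisms2} to get smoothly trivial Weinstein cobordisms $C_i$ making the positive ends contactomorphic, and finally set $Z_i := X_{i,flex}\circ C_i$. Your identification of the relative h-principle as the key technical input is exactly right.
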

\begin{proof}
Let $X_{i,flex}$ be the flexible Weinstein cobordism in the same formal class as $X_i$ with $\partial_-(X_{i,flex}) = (Y_i, \xi_i)$ provided by Eliashberg's existence h-principle.  Then $\partial_+(X_{i,flex})$ are all almost contactomorphic to $(Y, J)$ by assumption. Hence by Theorem \ref{thm: contact_cobordisms}, there exists a smoothly trivial Weinstein cobordism $C_i$ such that $\partial_- C_i = \partial_+(X_{i,flex})$, and $\partial_+C_i$ are contactomorphic for all $i$  (and in almost contact class $(Y,J)$). Then $Z_i := X_{i, flex}\circ C_i$ is a Weinstein cobordism such that  $\partial_- Z_i = \partial_- (X_{i, flex}) = (Y_i, \xi_i)$ and $\partial_+Z_i = \partial_+ C_i$ are all contactomorphic contact structures in $(Y, J)$. Furthermore, $Z_i$ is almost symplectomorphic to $X_i$ since $C_i$ is smoothly trivial. 
\end{proof}

The result of Casals, Murphy, and Presas \cite{CMP}  about smoothly trivial Weinstein cobordisms is actually stronger than stated above. They proved an existence h-principle for Weinstein cobordisms (with arbitrary topology) with overtwisted negative end and prescribed positive end; later this result was generalized by Eliashberg and Murphy \cite{EM}  who proved an h-principle for \textit{Liouville} cobordisms with
overtwisted negative end. On the other hand, Corollary \ref{cor: contact_cobordisms_diff_top} can be viewed as an h-principle for Weinstein cobordisms with prescribed negative end but no control over the positive end. There is no h-principle for Weinstein cobordisms with both negative \textit{and} positive ends prescribed. For example, there is no  Weinstein cobordism from a fillable contact structure to an overtwisted contact structure \cite{BEMtwisted}.

Bowden, Crowley, and Stipsicz
\cite{Bowden_Crowley_Stipsicz_stein_fillable_II} used surgery theory to prove that  almost Weinstein cobordisms $X_i$ satisfying the conditions in Corollary \ref{cor: contact_cobordisms_diff_top} always exist. 
Combining their result with Corollary \ref{cor: contact_cobordisms_diff_top}, we can prove the first claim in Theorem \ref{thm: contact_cobordisms} from the Introduction. \begin{corollary}\label{cor: contact_maximal_weinstein}
For any contact manifolds $(Y_1^{2n-1}, \xi_1), \cdots, (Y_k^{2n-1}, \xi_k), n \ge 3$, there exist Weinstein cobordisms $C_i^{2n}$ such that $\partial_- C_i^{2n} = (Y_i^{2n-1}, \xi_i)$ and $\partial_+C_i^{2n}$ are contactomorphic. 
\end{corollary}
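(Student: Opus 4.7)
The plan is to combine the two results mentioned just before the statement: the Bowden--Crowley--Stipsicz theorem on maximal almost contact manifolds, and Corollary \ref{cor: contact_cobordisms_diff_top} above, which upgrades almost Weinstein cobordisms with a common almost contact positive end into genuine Weinstein cobordisms with a common contactomorphic positive end. Since the contact manifolds $(Y_i^{2n-1}, \xi_i)$ may have entirely different smooth topology, the first step must reduce the problem to the setting of Corollary \ref{cor: contact_cobordisms_diff_top} purely on the level of underlying almost contact structures; the second step supplies the symplectic rigidity.

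First I would forget the contact structures and remember only the underlying almost contact structures $(Y_i, J_i)$. By the result of Bowden--Crowley--Stipsicz \cite{Bowden_Crowley_Stipsicz_stein_fillable_II}, there exists a single almost contact manifold $(M_{max}, J_{max})$ and, for each $i$, an almost Weinstein cobordism $X_i^{2n}$ with $\partial_-(X_i, J) = (Y_i, J_i)$ and $\partial_+(X_i, J) = (M_{max}, J_{max})$. These are only almost symplectic cobordisms; in particular, the positive boundary of $X_i$ carries the almost contact structure $J_{max}$ but not yet any preferred contact structure.

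Next I would feed this data directly into Corollary \ref{cor: contact_cobordisms_diff_top}: the contact manifolds $(Y_i^{2n-1}, \xi_i)$ together with the almost Weinstein cobordisms $X_i^{2n}$ to the fixed almost contact manifold $(M_{max}, J_{max})$ satisfy the hypotheses of that corollary. It therefore produces genuine Weinstein cobordisms $Z_i^{2n}$, almost symplectomorphic to $X_i^{2n}$, with $\partial_- Z_i = (Y_i^{2n-1}, \xi_i)$ and with $\partial_+ Z_i$ all contactomorphic to one fixed contact structure on $M_{max}$. Setting $C_i := Z_i$ gives the desired Weinstein cobordisms.

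All the genuine difficulty is located in the two inputs; the corollary is essentially a matter of chaining them in the correct order. The only mild subtlety to check is that the smooth topology of $X_i$ is free to vary (since $Y_i$ do), which is why one must use the general version Corollary \ref{cor: contact_cobordisms_diff_top} rather than Theorem \ref{thm: contact_cobordisms2}, where the cobordisms were constrained to be smoothly trivial. With that caveat noted, no further flexibility or stacking argument is needed at this stage, and no control over the smooth type of $C_i$ is claimed or required in the statement.
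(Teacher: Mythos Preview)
Your proposal is correct and matches the paper's approach exactly: the paper states immediately before the corollary that it follows by combining the Bowden--Crowley--Stipsicz existence of almost Weinstein cobordisms to a common almost contact target with Corollary~\ref{cor: contact_cobordisms_diff_top}, which is precisely the two-step chain you outline. Your observation that one needs the general-topology version (Corollary~\ref{cor: contact_cobordisms_diff_top}) rather than the smoothly-trivial version (Theorem~\ref{thm: contact_cobordisms2}) is also the right point to emphasize.
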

Hence any finite collection of contact manifolds admits a maximal contact element with respect to the Weinstein cobordism relation. Like in the  Weinstein setting, we do not know whether this holds for an arbitrary infinite collection of contact manifolds. 
\begin{question}\label{question: infinite_contact_max}
Which infinite collections of contact manifolds $(Y_i, \xi_i)$ admit a maximal contact structure $(Y_{max}, \xi_{max})$ such that each $(Y_i, \xi_i)$ has a Weinstein cobordism to $(Y_{max}, \xi_{max})$? For example, is this true for $\partial W_{flex, i}^{2n}$, the collection of all flexibly-fillable contact structures in $(Y^{2n-1}, J)$?
\end{question}
Here we allow $(Y_{max}, \xi_{max})$ to depend on the fixed infinite collection of contact structures  $(Y_i, \xi_i)$; hence
$(Y_{max}, \xi_{max})$  may be maximal for this particular collection but not maximal for arbitrary infinite collections, i.e. all contact structures. 
However as we showed in Corollary \ref{cor: maximal_strong_cobordism} in the Introduction, there is a contact manifold that is maximal with respect to the \textit{strong symplectic cobordism} relation for \textit{all} contact manifolds. 

\subsection{Connections to the symplectic mapping class group}\label{ssec: symplectomorphism}

In this section, we discuss a possible connection between the results in Section \ref{ssec: max_contact} and  the symplectic mapping class group. 
We first note that as an immediate application of Corollary \ref{cor: contact_cobordisms_diff_top},  any contact structure is Weinstein cobordant to a Weinstein-fillable contact structure. 
\begin{corollary}\label{cor: cobordant_to_fillable}
For any contact structure $(Y^{2n-1}, \xi), n \ge 3,$ there exists a Weinstein cobordism $C^{2n}$ and Weinstein domain $W^{2n}$ such that 
$\partial_- C^{2n} = (Y^{2n-1}, \xi)$ and $\partial_+ C^{2n}  = \partial W^{2n}$. 
\end{corollary}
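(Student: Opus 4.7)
The plan is to derive this as a direct application of Corollary \ref{cor: contact_maximal_weinstein} (equivalently, the first claim of Theorem \ref{thm: contact_cobordisms}), pairing the given contact manifold with a trivially Weinstein-fillable one and exploiting the fact that both can be capped off to a common positive contact boundary via Weinstein cobordisms.

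Concretely, first I would fix a Weinstein domain $V^{2n}$ with any smooth topology (for instance $V^{2n} = B^{2n}_{std}$, so that $\partial V = (S^{2n-1}, \xi_{std})$), and apply Corollary \ref{cor: contact_maximal_weinstein} to the pair of contact manifolds $(Y^{2n-1}, \xi)$ and $(\partial V^{2n}, \xi_{std})$. Note these need not be almost contactomorphic, which is exactly why we use the first (topology-free) claim of Theorem \ref{thm: contact_cobordisms} rather than Theorem \ref{thm: contact_cobordisms2}. This yields Weinstein cobordisms $C_1^{2n}$ and $C_2^{2n}$ with $\partial_- C_1 = (Y, \xi)$, $\partial_- C_2 = \partial V$, and $\partial_+ C_1$ contactomorphic to $\partial_+ C_2$. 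Then I would set $C^{2n} := C_1^{2n}$ and define the Weinstein domain $W^{2n} := V^{2n} \circ C_2^{2n}$, which is well-defined since $\partial_+ V = \partial_- C_2$ agree by construction. One has $\partial W = \partial_+ C_2 = \partial_+ C_1 = \partial_+ C$, which gives precisely the conclusion of the corollary.

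The only point requiring verification is that Corollary \ref{cor: contact_maximal_weinstein} applies to $(Y, \xi)$ and $\partial V$ even though they are not assumed to be almost contactomorphic; this is the statement that was obtained in Section \ref{ssec: max_contact} by combining Corollary \ref{cor: contact_cobordisms_diff_top} with the Bowden--Crowley--Stipsicz surgery-theoretic result on the existence of a maximal almost contact manifold receiving an almost Weinstein cobordism from any given almost contact manifold. So there is essentially no obstacle here; the work has already been done and the corollary is a one-line consequence. The role of the hypothesis $n \ge 3$ is inherited from Corollary \ref{cor: contact_maximal_weinstein}, which in turn needs it for the stacking construction and for the flexibility input from the Casals--Murphy--Presas theorem.
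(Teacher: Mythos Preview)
Your proposal is correct and is essentially the same argument as the paper's: pick an auxiliary Weinstein domain, use the maximal-contact construction to place both its boundary and $(Y,\xi)$ at the negative ends of Weinstein cobordisms with contactomorphic positive ends, and then concatenate the auxiliary domain with its cobordism to obtain the Weinstein filling. The only cosmetic difference is that the paper cites Corollary~\ref{cor: contact_cobordisms_diff_top} (implicitly invoking Bowden--Crowley--Stipsicz to supply the needed almost Weinstein cobordisms), whereas you cite the packaged Corollary~\ref{cor: contact_maximal_weinstein}; the content is identical.
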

\begin{remark}
Here the smooth and symplectic topology of $C^{2n}, W^{2n}$ depend on the smooth and contact topology of $(Y^{2n-1}, \xi)$. 
\end{remark}
\begin{proof}
Take any Weinstein domain $W^{2n}_0$. By Corollary \ref{cor: contact_cobordisms_diff_top}, there exist Weinstein cobordisms $C^{2n}, C_0^{2n}$ such that 
$\partial_- C^{2n} = (Y^{2n-1}, \xi), \partial_- C_0^{2n} = \partial W_0^{2n}$, and $\partial_+C^{2n} = \partial_+C_0^{2n}$. Then 
$W^{2n}:=W_0^{2n} \circ C_2^{2n}$ is a Weinstein domain 
and $\partial W^{2n} = \partial_+ C_2^{2n} = \partial_+C^{2n}$ as desired. 
\end{proof}
The $n =2$ case of Corollary \ref{cor: cobordant_to_fillable} was proven by Etnyre and Honda \cite{Etnyre_Honda_caps}. They used the fact that any 3-dimensional contact manifold has an open book decomposition whose monodromy is given by a product of positive and negative Dehn twists. Then by adding more positive Dehn twists to cancel out the negative Dehn twists, they built a Weinstein cobordism $C^4$ such that $\partial_- C^4 = (Y^3, \xi)$ and $\partial_+ C^4$ has an open book decomposition whose monodromy has only positive Dehn twists and is therefore Weinstein-fillable.
High-dimensional contact manifolds also have open book decompositions \cite{Giroux_ICM} with monodromy given by a symplectomorphism of a high-dimensional Weinstein domain. But for our proof of Corollary \ref{cor: cobordant_to_fillable}, we do not need know anything about the high-dimensional symplectic mapping class group  (which is not known in any case). Furthermore, the 
high-dimensional group is not generated by Dehn twists, which was a key ingredient in the proof in \cite{Etnyre_Honda_caps}. Instead, we can reverse the perspective 
and try to use Corollary \ref{cor: cobordant_to_fillable} to deduce properties of the symplectic mapping class group.

For any compactly supported symplectomorphism $\phi$ of a Weinstein domain $W^{2n}$, there is a contact manifold $(Y^{2n+1}_-,\xi_-)$ that has an open book decomposition
with page $W^{2n}$ and monodromy $\phi$.
By Corollary \ref{cor: cobordant_to_fillable}, there is a Weinstein cobordism $C^{2n+2}$ such that $\partial_-C = (Y_-, \xi_-)$ and $\partial_+ C = (Y_+, \xi_+)$ has a Weinstein filling. 
By work of Giroux and Pardon \cite{Giroux_Pardon_Lef}, any Weinstein domain has a Lefschetz fibration and hence the Weinstein-fillable contact structure $(Y_+, \xi_+)$ has an open book decomposition whose monodromy is a product of positive Dehn twists. 
A relative form of Giroux and Pardon's result would imply that 
Weinstein cobordant contact structures $(Y_-, \xi_-), (Y_+, \xi_+)$ have open book decompositions such that the pages of $(Y_-, \xi_-)$ are Weinstein subdomains of the pages of $(Y_+, \xi_+)$ and that the monodromies of the two open books are related by positive Dehn twists. 
However, contact structures have infinitely many compatible open book decompositions. 
For $n = 2$, Giroux \cite{Giroux_ICM} showed that all open book decompositions of a given contact manifold are related by \textit{stabilization}: adding a $1$-handle to the page of the open book and modifying the monodromy by a positive Dehn twist about a circle passing through the new 1-handle exactly once. There is a similar stabilization operation in high-dimensions using high-dimensional Dehn twists but it is unknown whether all open book decompositions of a given contact manifold are related by stabilizations in this case. 
The above discussion shows that a relative version of Giroux and Pardon's result \cite{Giroux_Pardon_Lef} and a high-dimensional analog of Giroux's stabilization result \cite{Giroux_ICM}
would give a positive answer to the following question. 
\begin{question}\label{question: symplectomophism_twist}
Suppose $\phi$ is a compactly supported exact symplectomorphism of a Weinstein domain $W^{2n}$. Does there exist a Weinstein domain $X^{2n}$ containing $W^{2n}$ as a subdomain such that $\phi$, as a symplectomorphism of $X^{2n}$, is compactly symplectically isotopic to a product of (positive and negative) Dehn twists of $X^{2n}$? 
\end{question}
Here we view $\phi$ as a symplectomorphism of $X^{2n}$ by extending it by the identity over $X^{2n}\backslash W^{2n}$. Of course, the Weinstein domain $X^{2n}$ may depend on the symplectomorphism $\phi$; in some sense, 
the symplectic data of $\phi$ is transferred to the data of $X^{2n}$.

As we noted above,  Dehn twists do not generate the high-dimensional symplectic mapping class group. There is sometimes even a topological obstruction to this, as explained to us by Casals. For example, $(\mathbb{RP}^{2n+1}, \xi_{std})$ has an open book decomposition with page $T^*\mathbb{RP}^n$
and monodromy the \textit{fibered} Dehn twist $\phi$
of $T^*\mathbb{RP}^n$.
The symplectomorphism $\phi$ is not smoothly isotopic to the identity;
otherwise  
$\mathbb{RP}^{2n+1}$
would have a smooth filling with handles of index $n$ and less, which is impossible
\cite{Eliashberg_Kim_Polterovich_RP2n+1}. At the same time, 
$T^*\mathbb{RP}^n, n \ge 1,$ has no Lagrangian spheres to Dehn twist about and so the fibered Dehn twist $\phi$ cannot be smoothly isotopic to a product of Dehn twists. More generally, if we stabilize the open book decomposition
$(T^*\mathbb{RP}^n,\phi)$ 
in the sense of Giroux by adding $n$-handles to $T^*\mathbb{RP}^n$ and modifying $\phi$ by Dehn twists through these handles to get a new open book decomposition $(X^{2n}, \phi')$ of $\mathbb{RP}^{2n+1}$, 
the resulting symplectomorphism $\phi'$ of the new Weinstein page $X^{2n}$ also cannot be smoothly isotopic to a product of Dehn twists, even if Lagrangian spheres exist in $X^{2n}$; again, this would give a smooth filling of 
$\mathbb{RP}^{2n+1}$
with handles of index $n+1$ and less.
In Question \ref{question: symplectomophism_twist}, we add handles to the page without necessarily adding Dehn twists to the monodromy. In this case, there is no topological obstruction as shown by \cite{Bowden_Crowley_Stipsicz_stein_fillable_II}.

\subsection{Codimension 2 contact embeddings}

Pancholi and Pandit \cite{pancholi_contact_embeddings} recently proved an h-principle for codimension 2 contact embeddings using open book decompositions. 
We will give an alternative proof of their result motivated by the stacking construction and Weinstein hypersurfaces \cite{eliashberg_revisited}. 
Both our proof and the proof of Pancholi and Pandit rely on the h-principle for overtwisted contact structures \cite{BEMtwisted}; in addition, we also need the h-principle for loose Legendrians \cite{Murphy11}.
The following is Theorem \ref{thm: codimension_2} from the Introduction. 
\begin{theorem}\label{thm: codimension_22}
If $(Y^{2n-1}, \xi_0), n \ge 3,$ has a contact embedding into $(Z^{2n+1}, \xi)$ with trivial normal bundle and $(Y^{2n-1}, \xi_1)$ is almost contactomorphic to $(Y^{2n-1}, \xi_0)$, then $(Y^{2n-1}, \xi_1)$ also has a contact embedding into $(Z^{2n+1}, \xi)$. 
\end{theorem}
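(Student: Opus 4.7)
The strategy is to use the trivial normal bundle together with the Casals--Murphy--Presas construction to modify the given contact submanifold within $(Z, \xi)$, passing through an overtwisted intermediary. Let $(Y, \xi_{ot})$ denote the (unique, by \cite{BEMtwisted}) overtwisted contact structure in the common almost contact class of $\xi_0$ and $\xi_1$. By \cite{CMP}, there are smoothly trivial Weinstein cobordisms $C_0, C_1$ with $\partial_- C_i = (Y, \xi_{ot})$ and $\partial_+ C_i = (Y, \xi_i)$, presented as collections of Weinstein $n$-handles attached along loose Legendrians (together with subcritical handles that cancel them smoothly). Equivalently, $(Y, \xi_i)$ is obtained from $(Y, \xi_{ot})$ by contact $n$-surgery along loose Legendrian links, and since $C_i$ is smoothly trivial, the belt spheres of the $n$-handles in $(Y, \xi_i)$ form Legendrian links $L_i$ along which contact $n$-anti-surgery returns us to $(Y, \xi_{ot})$.

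Next, I would use the trivial normal bundle hypothesis to fix a tubular neighborhood $U = Y \times D^2 \hookrightarrow (Z, \xi)$ of $(Y, \xi_0) = Y \times \{0\}$, modeled on the standard form $\alpha_0 + r^2 d\theta$. Each Legendrian $\Lambda \subset L_0 \subset (Y, \xi_0)$ sits in $(Z, \xi)$ as an isotropic sphere of dimension $n-1$, and using the normal $D^2$-direction it can be extended to an $n$-dimensional Legendrian $\tilde{\Lambda} = \Lambda \times \gamma \subset (Z, \xi)$ for a suitable curve $\gamma$ in $D^2$. Since $n \ge 3$ and $\dim Z = 2n+1 \ge 7$, Murphy's h-principle \cite{Murphy11} ensures that after a $C^0$-small formal perturbation $\tilde{\Lambda}$ is loose in $(Z, \xi)$. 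The key geometric point is that a contact surgery on the submanifold along such a loose $\tilde{\Lambda}$ can be implemented by an ambient contact isotopy localized inside $U$, since loose Legendrians are formally flexible and the $D^2$-factor provides room to perform Weinstein-hypersurface-type modifications in the sense of \cite{eliashberg_revisited} without disturbing $\xi$ outside $U$. Iterating this over the links $L_0$ (using the auxiliary subcritical handles to absorb the smooth topology), we obtain a contact embedding $(Y, \xi_{ot}) \hookrightarrow (Z, \xi)$ supported inside $U$, with trivial normal bundle inherited from the original embedding.

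Starting from this new embedding of $(Y, \xi_{ot})$, I would then attach the Weinstein $n$-handles of $C_1$ by the inverse construction: the attaching Legendrians in $(Y, \xi_{ot})$ are promoted to loose Legendrians in $(Z, \xi)$ via the $D^2$-direction, and Murphy's h-principle again allows the corresponding contact surgeries to be realized as localized ambient modifications, producing the desired contact embedding $(Y, \xi_1) \hookrightarrow (Z, \xi)$. The main technical obstacle is verifying that a contact $n$-surgery on a codimension $2$ submanifold along a Legendrian that is loose in the ambient manifold can indeed be realized by an ambient isotopy of embeddings preserving $\xi$; this is the codimension-$2$ analog of the stacking construction of Theorem \ref{thm: stacking2}, and it is precisely the point where the loose Legendrian h-principle and the standard tubular-neighborhood model combine. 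Once this local model is verified, the smooth triviality of $C_0 \circ C_1$ ensures that iterating the construction preserves the diffeomorphism type of $Y$, so the final object is a genuine contact embedding of $(Y, \xi_1)$ rather than some surgered manifold.
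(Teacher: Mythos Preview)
Your overall strategy—pass through the overtwisted structure $(Y,\xi_{ot})$ via the Casals--Murphy--Presas cobordisms $C_0,C_1$ and use Murphy's h-principle inside $(Z,\xi)$—matches the paper's. But the mechanism you propose for the key step is not the one that actually works, and the gap you flag as ``the main technical obstacle'' is real and unresolved in your argument.

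Your idea is to take the belt spheres $\Lambda\subset (Y,\xi_0)$, thicken them to Legendrian spheres $\tilde\Lambda=\Lambda\times\gamma$ in $(Z,\xi)$ using the normal $D^2$, loosen them, and then realize the required contact (anti-)surgery on the submanifold by an ``ambient contact isotopy.'' There are two problems. First, $\Lambda\times\gamma$ is a closed $n$-sphere, not a disk, and it is not clear how surgery on such an ambient Legendrian sphere would effect a contact anti-surgery on the codimension-two submanifold; the geometry of how $\tilde\Lambda$ meets $Y$ is not set up to model a handle. Second, the phrase ``ambient contact isotopy'' cannot be what is happening: you are changing the contact structure on the submanifold from $\xi_0$ to $\xi_{ot}$, so you need a genuinely new contact embedding, not an isotopy of the old one.

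The paper's resolution is to work with the Lagrangian \emph{disks} in $C_i$—the co-core $D_0^n$ of the $n$-handle in $C_0$ and the core $L_1^n$ in $C_1$—rather than the Legendrian belt/attaching spheres. Using the almost symplectomorphism of $C_0$ with the trivial cobordism $(Y,\xi_0)\times[0,1]$ (which embeds as a Weinstein hypersurface in $(Z,\xi)$ by the trivial-normal-bundle hypothesis), the co-core becomes a \emph{formal} Legendrian disk in $(Z,\xi)$ whose boundary is the genuine Legendrian $\Lambda_{loose,unknot}\subset (Y,\xi_0)$. Murphy's h-principle then produces a genuine loose Legendrian disk $D^n_{0,loose}\subset(Z,\xi)$ with that same boundary. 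A standard neighborhood of this Legendrian disk is a Weinstein hypersurface $T^*D^n$, and gluing it to $(Y,\xi_0)\times[1-\epsilon,1]$ along $J^1(\Lambda_{loose,unknot})\times[1-\epsilon,1]$ yields an embedded Weinstein hypersurface abstractly isomorphic to the handle $W_0$. Its \emph{negative} boundary is then the desired contact embedding of $(Y,\xi_{ot})\cup H^{n-1}$. The same trick with the core $L_1^n$ builds $W_1$ on top of this and yields $(Y,\xi_1)\hookrightarrow(Z,\xi)$. So the missing ingredient in your argument is precisely this Weinstein-hypersurface-gluing interpretation: attach a cotangent-disk hypersurface along a Legendrian disk produced by the loose h-principle, rather than trying to do ambient surgery along a Legendrian sphere.
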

\begin{proof}
Let $(Y^{2n-1}, \xi_{ot})$ be the overtwisted contact manifold in the same almost contact class as $(Y^{2n-1}, \xi_0)$ and $(Y^{2n-1}, \xi_1)$.
The result of Casals, Murphy, and Presas \cite{CMP} shows that there are smoothly trivial Weinstein cobordisms $C_i^{2n}, i=0,1,$ such that $\partial_- C_i^{2n} = (Y, \xi_{ot})$ and  $\partial_+ C_i^{2n} = (Y, \xi_i)$.
More precisely, there are  Legendrians $\Lambda_i \subset (Y, \xi_{ot})  \cup H^{n-1}, i = 0,1,$ 
such that $C_i^{2n} =  (Y, \xi_{ot})  \cup H^{n-1} \cup H^n_{\Lambda_i}$. 
We can view the handles $H^n_{\Lambda_i}$ as Weinstein cobordisms $W_i^{2n} \subset C_i^{2n}$ such that  
$\partial_- W_i = (Y, \xi_{ot}) \cup H^{n-1}$ and $\partial_+ W_i = (Y, \xi_i)$. 
The co-cores of the handles $H^n_{\Lambda_i}$ are Lagrangian disks $D_i^n \subset W_i^{2n}$ with Legendrian boundary $\partial_+ D_i^n = \Lambda_{loose, unknot} \subset \partial_+ W_i^{2n} = (Y^{2n-1}, \xi_i)$. Similarly, the Lagrangian cores $L^n_i$ of $H^n_{\Lambda_i}$ are Lagrangian disks  in $W^{2n}_i$ with $\partial_- L^n_i = \Lambda_i \subset \partial_-W_i^{2n} = (Y, \xi_{ot}) \cup H^{n-1}$.

By assumption,  $(Y^{2n-1}, \xi_{0})$ has a contact embedding into $(Z^{2n+1}, \xi)$ with trivial normal bundle.
Hence $(Y^{2n-1}, \xi_0) \times [0,1]$ is a Weinstein hypersurface of $(Z^{2n+1}, \xi)$.
The Weinstein cobordism $C^{2n}_0$ is smoothly trivial and therefore is almost symplectomorphic to  $(Y^{2n-1}, \xi_0) \times [0,1]$; this almost symplectomorphism is the identity near $\partial_+ C^{2n}_0 = (Y^{2n-1}, \xi_0) \times \{1\} $. By applying this almost symplectomorphism to $D^n_0 \subset C^{2n}_0$, we get a formal Lagrangian disk $D^n_{0, formal}$ in $(Y^{2n-1}, \xi_0) \times [0,1]$ formally isotopic to $D^n_0$; near its boundary, $D^n_{0, formal}$ agrees with the genuine Lagrangian $\Lambda_{loose, unknot} \times [1-\epsilon, 1] \subset (Y, \xi_0) \times [1-\epsilon, 1]$.
Since we have a symplectic embedding of  $(Y^{2n-1}, \xi_0) \times [0,1]$ into $(Z^{2n+1}, \xi)$, we can lift the formal Lagrangian $D^n_{0,formal}$ in  $(Y^{2n-1}, \xi_0) \times [0,1]$ to a formal Legendrian in $(Z^{2n+1}, \xi)$ that also agrees with $\Lambda_{loose, unknot} \times [1-\epsilon, 1]$ near its boundary. 
By the existence part of Murphy's h-principle \cite{Murphy11}, there is a loose Legendrian disk $D^n_{0,loose}$ in $(Z^{2n+1}, \xi)$ that coincides with $\Lambda_{loose, unknot} \times [1-\epsilon, 1] \subset (Y, \xi_0) \times [1-\epsilon, 1] \subset (Z^{2n+1}, \xi)$ near its boundary.  
We note that $D^n_{0, loose} \subset (Z^{2n+1}, \xi)$ cannot be entirely contained in  
$(Y, \xi_0) \times [0,1] \subset (Z^{2n+1}, \xi)$ since otherwise the loose Legendrian $\Lambda_{loose, unknot} \subset (Y, \xi_0)$ would have a Lagrangian disk filling in the symplectization $(Y, \xi_0) \times [0,1]$, which is impossible in general.

 A neighborhood of $D^n_{0, loose}$ in $(Z^{2n+1}, \xi)$ looks like a neighborhood of $D^n$ in the 1-jet space $J^1(D^n) = T^*D^n \times \mathbb{R}$. The Weinstein hypersurface $T^*D^n \subset J^1(D^n)$ gives a corresponding Weinstein hypersurface $T^*D^n_{0, loose}$ in $(Z^{2n+1}, \xi)$ containing $D^n_{0, loose}$. Note that $T^*(S^{n-1} \times [1-\epsilon, 1]) \subset T^*D^n_{0, loose}$  lies inside the 
hypersurface $(Y^{2n-1}, \xi_0) \times [1-\epsilon,1]$; in fact, it coincides with $J^1(\Lambda_{loose ,unknot}) \times [1-\epsilon, 1] \subset (Y^{2n-1}, \xi_0) \times [1-\epsilon, 1]$, where $J^1(\Lambda_{loose ,unknot})$ is a neighborhood of 
$\Lambda_{loose ,unknot}$ in  $(Y^{2n-1}, \xi_0)$. 
So we can glue the Weinstein hypersurfaces $T^*D^n_{0, loose}$ and $(Y^{2n-1}, \xi_0) \times [1-\epsilon,1]$ in $(Z^{2n+1}, \xi)$ to get another Weinstein hypersurface in $(Z^{2n+1}, \xi)$. This hypersurface as an abstract Weinstein cobordism is precisely $W_0^{2n}$ since $W_0^{2n}$ is defined by the fact that $\partial_+ W_0 = (Y, \xi_0) $ and that it has a single co-core $D^n_0$ with $\partial_+ D^n_0 = \Lambda_{loose, unknot}$. The contact embedding of $\partial_+ W_0^{2n}$ into $(Z^{2n+1}, \xi)$ coincides with the original contact embedding of $(Y^{2n-1}, \xi_0)$ into $(Z^{2n+1}, \xi)$. The contact embedding of $\partial_- W_0^{2n}$ gives us a contact embedding of $(Y^{2n-1}, \xi_{ot}) \cup H^{n-1}$
 into $(Z^{2n+1}, \xi)$.

 Similarly, the Lagrangian core $L^n_1$ of $W_1^{2n}$ can be used to produce 
a formal Legendrian disk in $(Z^{2n+1}, \xi)$ that  agrees with $\Lambda_1= \partial_- L^n_1 \subset \partial_- W_1^{2n} = (Y, \xi_{ot}) \cup H^{n-1} \subset (Z^{2n+1}, \xi)$ near its boundary.
 Again by Murphy's existence h-principle \cite{Murphy11}, there is a genuine Legendrian disk $L^{n}_{1,loose}$ in $(Z^{2n+1}, \xi)$ that also agrees with $\Lambda_1 \subset  (Y^{2n-1}, \xi_{ot}) \cup H^{n-1}$ near its boundary.   We can glue  $(Y^{2n-1}, \xi_{ot}) \cup H^{n-1}$ and  
 $T^*L^n_1$ to get a Weinstein hypersurface embedding of $W_1^{2n}$ into $(Z^{2n+1}, \xi)$; since the core $L^n_{1, loose}$ of $W_1^{2n}$ is  a loose Legendrian in $(Z^{2n+1}, \xi)$,  $W_1^{2n}$ is a \textit{loose} Weinstein hypersurface in the sense of Eliashberg 
 \cite{eliashberg_revisited}.  
    The contact embedding of $\partial_- W_1^{2n}$ agrees with the contact embedding of $(Y, \xi_{ot}) \cup H^{n-1}$ into $(Z^{2n+1}, \xi)$ produced in the previous paragraph. The contact embedding of $\partial_+ W_1^{2n}$ gives us the desired contact embedding of $(Y^{2n-1}, \xi_{1})$
 into $(Z^{2n+1}, \xi)$.
\end{proof}

\section{Stacking Lagrangians}\label{sec: Lag_fillings}
\subsection{Maximal Lagrangians}
Now we prove some results about Lagrangians. The following result is the relative analog of Theorem \ref{thm: finitely_many_subdomains2}. 
\begin{corollary}\label{cor: max_lagrangians}
Suppose that $\phi_1, \cdots, \phi_k: L^n \hookrightarrow W^{2n}, n \ge 3,$ are formally isotopic regular Lagrangians that are closed or have non-empty Legendrian boundary. Then there is a Weinstein domain $X^{2n}$ almost symplectomorphic to $W^{2n}$, a regular Lagrangian $j: L^n \hookrightarrow X^{2n}$, and formally isotopic Weinstein embeddings $\psi_1, \cdots, \psi_k: W^{2n} \hookrightarrow X^{2n}$ such that $\psi_1 \circ \phi_1 = \cdots = \psi_k \circ \phi_k= j: L^n \hookrightarrow X^{2n}$.
\end{corollary}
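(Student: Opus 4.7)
The plan is to adapt the proof of Corollary \ref{cor: abstractly_symp} by taking $W_0$ to be a standard Weinstein neighborhood $\nu(L)$ of $L$, so that the problem reduces to stacking the Weinstein cobordism complements of the $\phi_i(L)$. Since $\phi_i$ is a regular Lagrangian, after a Weinstein homotopy of $W$ we may assume that $\phi_i(L)$ is a union of cocores of top-index Weinstein handles, and hence the complement $C_i := W \setminus \phi_i(\nu(L))$ is a genuine Weinstein cobordism whose negative boundary is canonically identified (using the formal isotopy of the $\phi_i$) with a fixed contact manifold $\partial_- \nu(L)$ depending only on $L$ (and the germ of $\partial L \subset \partial W$ in the case of Legendrian boundary). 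In particular, we obtain a decomposition $W \simeq \nu(L) \circ C_i$ for each $i$, where the inclusion $\phi_i : L \hookrightarrow W$ corresponds to the core inclusion $L \hookrightarrow \nu(L) \subset \nu(L) \circ C_i$.

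Next I would observe that, because the $\phi_i$ are formally Lagrangian isotopic, the cobordisms $C_1, \ldots, C_k$ are pairwise almost symplectomorphic rel their common negative boundary. The cobordism version of Theorem \ref{thm: finitely_many_subdomains2} (discussed immediately after its proof) then produces an almost symplectomorphic Weinstein cobordism $C$ and smoothly trivial Weinstein cobordisms $D_i$ with $\partial_- D_i = \partial_+ C_i$ such that each $C_i \circ D_i$ is Weinstein homotopic to $C$; concretely, one takes $C := Stack_{C_{flex}}(C_1, \ldots, C_k)$. Setting $X^{2n} := \nu(L) \circ C$, the resulting Weinstein domain is almost symplectomorphic to $\nu(L) \circ C_i \simeq W^{2n}$, and I let $j : L^n \hookrightarrow X^{2n}$ be the core inclusion of $L$ into the subdomain $\nu(L) \subset X$, which is manifestly a regular Lagrangian since $X \setminus \nu(L) = C$ is Weinstein.

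The embeddings $\psi_i : W^{2n} \hookrightarrow X^{2n}$ are then defined via the Weinstein homotopy equivalences $W \simeq \nu(L) \circ C_i$ followed by the inclusion $\nu(L) \circ C_i \hookrightarrow \nu(L) \circ C = X$ induced by $C_i \subset C$. By construction $\psi_i \circ \phi_i$ factors as $L \hookrightarrow \nu(L) \subset X$, which is precisely $j$, independent of $i$. The formal isotopy between the $\psi_i$ comes from the fact that $D_i = C \setminus C_i$ is smoothly trivial and Weinstein cobordisms over a fixed smooth topology are formally unique up to the formal class of the underlying almost symplectic structure, which is the same for all $i$ because the $C_i$ are pairwise almost symplectomorphic.

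The main obstacle I anticipate is the case in which $L$ has non-empty Legendrian boundary $\Lambda \subset \partial W$: then $\nu(L)$ is not a Weinstein domain but rather behaves like a Weinstein sector, and one must check that the stacking construction still produces a genuine Weinstein cobordism on the complement and that the contact boundary identifications of the $C_i$ extend coherently across the locus where $\nu(L)$ meets $\partial W$. This should be handled by working with the handle model in which $\phi_i(L)$ is a union of cocores of top-index handles (so that $C_i$ literally is obtained from $W$ by deleting those handles) and by performing the stacking relative to the common collar of $\Lambda$; the stacking along $C_{flex}$ only modifies the $C_i$ away from this collar, so $j$ inherits the correct Legendrian boundary $\Lambda \subset \partial X$.
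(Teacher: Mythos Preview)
Your proposal is correct and follows essentially the same route as the paper: apply Corollary~\ref{cor: abstractly_symp} with $W_0 = \nu(L) \cong T^*L$, so that $X = \nu(L) \circ Stack_{C_{flex}}(C_1,\dots,C_k)$ and $j$ is the core inclusion. For the Legendrian boundary case the paper resolves your anticipated obstacle in one line---it requires $C_{flex}$ to be a flexible cobordism \emph{in the complement of} $\partial L \subset \partial T^*L$, which is exactly the ``stacking relative to the collar of $\Lambda$'' that you describe.
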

\begin{proof}
The proof essentially is an application of Corollary \ref{cor: abstractly_symp}. If $L$ is closed, we can directly apply 
Corollary \ref{cor: abstractly_symp} with $W_0 = T^*L$.
If $L$ has Legendrian boundary, we slightly modify the proof of Corollary \ref{cor: abstractly_symp} (which is stated only for subdomains) by requiring that $C_{flex}$ be a flexible cobordism \textit{in the complement of} $\partial L \subset \partial T^*L$. 
\end{proof}
The Lagrangian $j(L)$ is maximal in the sense that it extends the Lagrangians $\phi_i(L)$, namely $j(L)|_{\psi_i(W)} = \phi_i(L)$. 
In general the $X \backslash \psi_i(W)$ are not  trivial Weinstein cobordisms for all $i$ and $X$ is not Weinstein homotopic to $W$. For if $X \backslash \psi_i(W)$ were a trivial Weinstein cobordism for all $i$, then the Viterbo transfer map would an isomorphism on wrapped Floer homology \cite{Ganatra_Pardon_Shende_ii} and hence $WH(L, L; X) \cong WH(L|_{\psi_i(W)}, L|_{\psi_i(W)}; \psi_i(W)) \cong WH(\phi_i(L), \phi_i(L); W)$ for all $i$;
 the fact that the Viterbo transfer map is just restriction is because $L$ is regular in $W$ and hence intersects $\partial W$ in at most one component \cite{Abouzaid_Seidel}. 
However there are $\phi_i$ such that 
$WH(\phi_i(L), \phi_i(L); W)$ are different for different $i$ and hence the cobordisms cannot all be trivial in general. In particular, the Weinstein embeddings $\psi_i$ will generally not be symplectically isotopic. Finally we note that a version of Corollary \ref{cor: maximal_strong_cobordism} holds even if $\phi_i$ map $L$ to different (but almost symplectomorphic) Weinstein domains $W_i$.

\subsection{Legendrians with many Lagrangian fillings}\label{ssec:Leg_many_fillings}

We can also use the stacking construction to produce Legendrians with many Lagrangian fillings. The first such Legendrians were  produced in \cite{Sabloff}. These were Legendrians in $(S^{2n-1}, \xi_{std})$ with arbitrarily many (but finitely many) different Lagrangian fillings in $B^{2n}_{std}$ for $n \ge 2$; for $n = 2$, the Legendrians are necessarily disconnected since $tb(\Lambda) = - \chi(L)$ 
 and hence the formal class of $\Lambda$ determines the genus of the Lagrangian filling   \cite{Chantraine_Lagrangian_cob}.
 The examples for $n \ge 3$ are obtained by spinning the examples for $n =2$ and hence have quite special topology. In this section, we construct Legendrians that have many fillings with prescribed topology. Like \cite{Sabloff}, we produce arbitrarily many but finitely many fillings. Unlike for contact manifolds, which can have infinitely many symplectic fillings, it is not known whether there exist Legendrians that have infinitely many fillings with different topology. 

To construct Legendrians with many Lagrangian fillings, we first need to prove the Legendrian version of Theorem \ref{thm: contact_cobordisms2}. 
\begin{theorem}\label{thm: leg_cobordisms}
	Suppose that $\Lambda_1^{n-1}, \cdots, \Lambda_k^{n-1} \subset (Y^{2n-1},\xi), n \ge 3,$ are formally isotopic Legendrians. Then there exists a smoothly trivial Weinstein cobordism $X^{2n}$  with $\partial_- X^{2n} = (Y, \xi)$ and smoothly trivial regular Lagrangian cobordisms $L_i^n \subset X^{2n}$ such that $\partial_- L_i^n = \Lambda_i$ and $\partial_+ L_i^n$ are all coincide. 
	\end{theorem}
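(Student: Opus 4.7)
The plan is to mirror the proof of Theorem \ref{thm: contact_cobordisms2} in the Lagrangian setting, with Murphy's h-principle for loose Legendrians \cite{Murphy11} playing the role of the h-principle for overtwisted contact structures. The strategy is: for each $\Lambda_i$, construct a smoothly trivial Lagrangian cobordism inside a smoothly trivial Weinstein cobordism over $(Y, \xi)$ from $\Lambda_i$ to a \emph{loose} Legendrian; stack these via Theorem \ref{thm: stacking2} to obtain a common ambient Weinstein cobordism $X'$; and finally identify the upper Legendrian ends using Murphy's h-principle together with an ambient contact isotopy realized in a trivial symplectization collar.

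For the first step, for each $i$ I would construct a smoothly trivial Weinstein cobordism $X_i'$ with $\partial_- X_i' = (Y, \xi)$ built from a smoothly cancelling pair of Weinstein handles $H^{n-1} \cup H^n_{\Lambda_i^+}$, and then modify the trivial Lagrangian cylinder $\Lambda_i \times [0,1] \subset X_i'$ by a boundary Polterovich-type Lagrangian surgery with the Lagrangian co-core disk of $H^n_{\Lambda_i^+}$, whose Legendrian boundary is the loose Legendrian unknot belt sphere $\Lambda_i^{\mathrm{belt}} \subset \partial_+ X_i'$. Since boundary connected sum with a disk does not change the smooth topology, the resulting Lagrangian $M_i$ is a smoothly trivial regular Lagrangian cobordism from $\Lambda_i$ at the bottom to the Legendrian connected sum $\Lambda_i^\ell := \Lambda_i \# \Lambda_i^{\mathrm{belt}} \subset \partial_+ X_i'$ at the top, which inherits a loose chart from $\Lambda_i^{\mathrm{belt}}$ and is therefore loose.

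Applying Theorem \ref{thm: stacking2} to the $X_i'$ produces smoothly trivial Weinstein cobordisms $D_i$ such that $X' := X_i' \circ D_i$ is a common smoothly trivial Weinstein cobordism up to Weinstein homotopy; arranging by general position that the attaching data of the $D_i$ avoid the loose charts of the $\Lambda_i^\ell$, each $M_i$ extends through $D_i$ via a trivial Legendrian cylinder over $\Lambda_i^\ell$ to a smoothly trivial regular Lagrangian $M_i' \subset X'$ with $\partial_+ M_i' = \Lambda_i^\ell \subset \partial_+ X'$. Each $\Lambda_i^\ell$ remains loose, and since a smoothly trivial Lagrangian cobordism inside a smoothly trivial ambient cobordism induces (after fixing a smooth trivialization) a formal Legendrian isotopy between its ends, the hypothesis that the $\Lambda_i$ are formally Legendrian isotopic implies that the $\Lambda_i^\ell$ are pairwise formally Legendrian isotopic in $\partial_+ X'$. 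Murphy's h-principle then yields ambient contact isotopies of $\partial_+ X'$ sending the $\Lambda_i^\ell$ to a common Legendrian $\Lambda$; realizing these via Hamiltonian flows in a trivial symplectization collar appended to $X'$ and using them to modify the top ends of the $M_i'$ produces the desired smoothly trivial $X$ and smoothly trivial regular Lagrangian cobordisms $L_i$. The main obstacle is the first step: arranging the boundary Polterovich surgery so that smooth triviality of both the ambient cobordism and the Lagrangian is preserved while a genuine loose chart is inherited at the top, and verifying that regularity of each $M_i$ (hence of $L_i$) survives the surgery and the subsequent stacking construction.
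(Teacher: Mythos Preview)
Your approach is correct and essentially dual to the paper's, but the execution differs in two linked ways.

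First, for the individual ``loosening'' cobordisms: the paper does not build these by hand but cites a result from \cite{Lazarev_Lagrangian_regular} stating that for any $\Lambda$ there is a smoothly trivial regular Lagrangian cobordism in a Weinstein cobordism homotopic to $(Y,\xi)\times[0,1]$ with $\partial_-L=\Lambda_{loose}$ and $\partial_+L=\Lambda$. Reinterpreted, this gives cobordisms $C_i$ in which the trivial cylinder on a \emph{fixed} loose Legendrian $\Lambda_{loose}$ has upper boundary Legendrian isotopic to $\Lambda_i$ in $\partial_+ C_i$. Your Polterovich-type surgery with a co-core disk is essentially how one proves that black box, so you are inlining its proof; the points you flag (regularity of the surgered Lagrangian, inheritance of a loose chart) are exactly what need to be checked there.

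Second, and more interestingly, the direction is reversed. The paper puts the loose Legendrian at the \emph{bottom}: since the $\Lambda_i$ are formally isotopic, their loosenings are Legendrian isotopic by Murphy and may be taken equal to a single $\Lambda_{loose}\subset(Y,\xi)$ \emph{before} stacking. After forming $X=\mathrm{Stack}_{(Y,\xi)}(C_1,\dots,C_k)$, the common positive end is simply $\Lambda_{loose}\times\{k\}\subset\partial_+X$, and each $L_i$ is built as (trivial cylinder on $\Lambda_i$ through $C_i$) $\circ$ (trace of the Legendrian isotopy $\Lambda_i\simeq\Lambda_{loose}$ in $\partial_+C_i$) $\circ$ (trivial cylinder on $\Lambda_{loose}$ through $D_i$). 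No second application of Murphy at the top is required, and no argument that looseness survives the stacking or that the upper Legendrians are formally isotopic in $\partial_+X$ is needed. Your route puts the loose Legendrians at the top, so you must preserve loose charts through the stacking and then invoke Murphy again in $\partial_+X'$; this works, but carries the extra bookkeeping you identify. The paper's ordering sidesteps all of it.
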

\begin{proof}
The proof is similar to the proof of Theorem \ref{thm: contact_cobordisms2}, which uses the existence of smoothly trivial Weinstein cobordisms from overtwisted contact structures to arbitrary contact structures \cite{CMP}. Here we will need the following Legendrian analog, proven in \cite{Lazarev_Lagrangian_regular}: for any Legendrian $\Lambda^{n-1} \subset (Y^{2n-1}, \xi), n \ge 3$, there is a smoothly trivial regular Lagrangian cobordism $L^n$ in the trivial Weinstein cobordism $(Y, \xi) \times [0,1]$ such that $\partial_+ L^n = \Lambda^{n-1}$ and  $\partial_-L^n= \Lambda_{loose}^{n-1}$. Here $\Lambda_{loose}^{n-1} \subset (Y^{2n-1}, \xi)$ is the (unique) loose Legendrian in the same formal class as $\Lambda^{n-1}$. Said another way, there is a Weinstein cobordism $C^{2n}$ that is Weinstein homotopic to the trivial one such that $\Lambda_{loose}^{n-1} \subset \partial_- C^{2n} = (Y^{2n-1}, \xi)$ viewed as a Legendrian of $\partial_+ C^{2n}$  is Legendrian isotopic to $\Lambda^{n-1}$ under the non-trivial identification of $\partial_+ C^{2n}$ with $(Y^{2n-1}, \xi)$.
So from this point of view, 
$(C^{2n}, L^n)$ is just $(T^*(\Lambda_{loose}\times[0,1]) \cup H^{n-1} \cup H^n_{\Lambda'}, \Lambda_{loose} \times [0,1])$ for some loose Legendrian $\Lambda'$ which is symplectically linked with $\Lambda_{loose}$.

We apply this result to the Legendrians $\Lambda_i \subset (Y, \xi)$. Since $\Lambda_i$ are formally Legendrian isotopic, $\Lambda_{i, loose}$ are Legendrian isotopic by the h-principle for loose Legendrians \cite{Murphy11}; we will fix one representative $\Lambda_{loose}$ of these Legendrians. Then by \cite{Lazarev_Lagrangian_regular} there exist Weinstein cobordisms $C_i^{2n}$ with the following properties: $\partial_-C_i^{2n} = (Y, \xi)$, $C_i^{2n}$ are homotopic to trivial Weinstein cobordisms, and $C_i^{2n}$ contain trivial Lagrangian cobordisms  $\Lambda_{loose} \times [0,1] \subset C_i^{2n}$ such that $\Lambda_{loose} \times \{1\}$ viewed as a Legendrian of $\partial_+ C_i^{2n} = (Y^{2n-1}, \xi)$ is Legendrian isotopic to $\Lambda_i$. Next we form $X^{2n}:= Stack_{(Y, \xi)}(C_1^{2n}, \cdots, C_k^{2n})$. Even though each $C_i^{2n}$ is Weinstein homotopic to  $(Y, \xi) \times [0,1]$,  the stacked cobordism $X^{2n}$ might not be Weinstein homotopic to $(Y, \xi) \times [0,1]$; see Corollary \ref{cor: domain_flex_everything}. However since $C_i^{2n}$ are smoothly trivial and $n \ge 3$,  $X^{2n}$ is still smoothly trivial.

There is a Lagrangian cobordism $\Lambda_{loose} \times [0,k] \subset X^{2n}$. Its positive boundary $\Lambda_{loose} \times \{k\} \subset \partial_+ X^{2n}$ is not loose since the attaching spheres of $X^{2n}$ are symplectically linked with $\Lambda_{loose}$. 
For each $i$, we will construct Lagrangian cobordisms $L_i^{n} \subset X^{2n}$ from $\Lambda_i \subset (Y, \xi)$ to $\Lambda_{loose} \times \{k\} \subset \partial X^{2n}$; so $\partial_- L_i = \Lambda_i$ and $\partial_+ L_i$ will all agree with the fixed Legendrian  $\Lambda_{loose} \times \{k\}$ as desired. 
To construct $L_i^n$,  we first Weinstein homotope $X^{2n}= Stack_{(Y, \xi)}(C_1^{2n}, \cdots, C_k^{2n})$ to $C_i^{2n} \circ D_i^{2n}$ for the Weinstein cobordism $D_i^{2n}  = Stack_{(Y, \xi)}(C_1^{2n}, \cdots, C_k^{2n}) \backslash C_i$. By construction, $\Lambda_i \subset \partial_+ C_i^{2n}$ is Legendrian isotopic to $\Lambda_{loose} \times \{1\} \subset \partial_+C_i^{2n}$ in $\partial_+C_i^{2n}$. 
The Lagrangian cobordism $L_i^n \subset X^{2n}$ is defined by gluing the
trivial cobordism $\Lambda_i \times [0,1] \subset C_i^{2n}$ with the cobordism induced by the Legendrian isotopy 
from $\Lambda_i$ to $\Lambda_{loose}$ in $\partial_+ C_i^{2n}$ with the trivial cobordism $\Lambda_{loose} \times [1,k] \subset 
D_i^{2n}$. Then $\partial_- L_i = \Lambda_i \subset (Y, \xi)$ and $\partial_+ L_i = \Lambda_{loose} \times \{k\} \subset \partial_+D^n_i$. Now we Weinstein homotope $C_i^{2n} \circ D^{2n}_i$ back to $X^{2n}= Stack_{(Y, \xi)}(C_1^{2n}, \cdots, C_k^{2n})$. Since this Weinstein homotopy consists just of changing the order of handle attachment, the Legendrian $\Lambda_{loose} \times \{k\} \subset \partial_+D^n_i$ corresponds to the Legendrian $\Lambda_{loose} \times \{k\} \subset \partial_+X^{2n}$. So we can view $L_i^n$ as a regular Lagrangian in $X^{2n}$ with $\partial_- L_i^n = \Lambda_i$ and $\partial_+ L_i^n = \Lambda_{loose} \times \{k\}$ as desired.
Finally, $L_i^n$ is smoothly trivial since it is a concatenation of three smoothly trivial cobordisms. 
\end{proof}
\begin{remark}
Although $\Lambda_i$ are all Legendrian isotopic to $\Lambda_{loose} \times \{1\}$ in $\partial_+ C^{2n}_i$, they may not be Legendrian isotopic in $\partial_+ X^{2n}$ once we attach the rest of the handles of $X^{2n}$. This is why the Lagrangian cobordism $L_i$ uses the Legendrian isotopy  from $\Lambda_i$ to $\Lambda_{loose} \times \{1\}$ in $\partial_+C^{2n}_i$. 
\end{remark}

We do not know if it is possible to control the Weinstein homotopy type of $C^{2n}$ and require $C^{2n}$ to be Weinstein homotopic to the trivial Weinstein cobordism $(Y, \xi) \times [0,1]$. In principle, starting with different collections of Legendrians $\Lambda_1, \cdots, \Lambda_k \subset (Y, \xi)$ might lead to different (smoothly trivial) Weinstein cobordisms $X^{2n}$. 

Theorem \ref{thm: leg_cobordisms} can be used to convert Lagrangians with \textit{formally isotopic} Legendrian boundaries into Lagrangians with genuinely isotopic Legendrian boundaries. The following result is the Legendrian analog of Corollary \ref{cor: contact_fillings}. 

\begin{corollary}\label{cor: Lag_fillings}
	If $W^{2n}, n \ge 3,$ is a Weinstein domain and $L_i^{n} \subset  W^{2n}$ are exact Lagrangians with formally isotopic Legendrian boundaries, then there is a Weinstein domain $X^{2n}$ almost symplectomorphic to $W^{2n}$ and exact Lagrangians $K_i^{n} \subset X^{2n}$  formally isotopic to $L_i^{2n}$ with Legendrian isotopic boundaries. Furthermore, $W^{2n}$ is a subdomain of $X^{2n}$ and $K_i^n$ extends $L_i^n$ in the sense that $K_i^n|_{W} = L_i^n$. 
\end{corollary}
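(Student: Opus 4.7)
The plan is to apply Theorem \ref{thm: leg_cobordisms} to the boundary Legendrians $\Lambda_i := \partial L_i^n \subset \partial W^{2n} =: (Y^{2n-1}, \xi)$ and then glue the resulting Lagrangian cobordisms on top of the given $L_i^n$. Since the $\Lambda_i$ are formally isotopic by hypothesis, Theorem \ref{thm: leg_cobordisms} produces a smoothly trivial Weinstein cobordism $C^{2n}$ with $\partial_- C^{2n} = (Y, \xi)$ together with smoothly trivial regular Lagrangian cobordisms $M_i^n \subset C^{2n}$ satisfying $\partial_- M_i^n = \Lambda_i$ and $\partial_+ M_1^n = \cdots = \partial_+ M_k^n =: \Lambda \subset \partial_+ C^{2n}$.

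I then set $X^{2n} := W^{2n} \circ C^{2n}$, so $W^{2n}$ sits as a Weinstein subdomain of $X^{2n}$, and $X^{2n}$ is almost symplectomorphic to $W^{2n}$ because $C^{2n}$ is smoothly trivial (any Weinstein structure on a smoothly trivial cobordism is almost symplectomorphic to $(Y,\xi)\times[0,1]$). Define
\[
K_i^n := L_i^n \cup_{\Lambda_i} M_i^n \subset X^{2n}.
\]
By construction $K_i^n|_W = L_i^n$ and $\partial K_i^n = \Lambda$ for every $i$, so the boundaries all coincide and are trivially Legendrian isotopic. To see that $K_i^n$ is exact, note that the primitives of $L_i^n$ and $M_i^n$ are locally constant near $\Lambda_i$ (since $\Lambda_i$ is Legendrian), so after shifting one primitive by an additive constant we obtain a well-defined global primitive on the concatenation $K_i^n$.

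It remains to produce a formal Lagrangian isotopy from $K_i^n$ to $L_i^n$ inside $X^{2n}$. Because $M_i^n$ is smoothly trivial in $C^{2n}$, it is smoothly isotopic rel $\partial_- M_i^n$ to a cylinder $\Lambda_i \times [0,1]$; extending this ambient isotopy by the identity on $L_i^n$ yields a smooth isotopy in $X^{2n}$ that retracts $K_i^n$ onto a collar-enlargement of $L_i^n$. Differentiating this isotopy and pairing it with a compatible almost complex structure on $X^{2n}$ supplies the required path of complex bundle isomorphisms, with both endpoints Lagrangian by construction. The main technical point is this final formal-isotopy verification, but it reduces to the smooth triviality of $M_i^n$ already guaranteed by Theorem \ref{thm: leg_cobordisms}; once that theorem is in hand, the rest of the argument is essentially a concatenation, so the real content has been absorbed into the Legendrian-level statement.
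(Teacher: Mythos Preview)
Your proof is correct and follows essentially the same approach as the paper: apply Theorem \ref{thm: leg_cobordisms} to the Legendrian boundaries $\partial L_i^n$, obtain the smoothly trivial Weinstein cobordism $C^{2n}$ with Lagrangian cobordisms inside it, and concatenate to form $X^{2n} = W^{2n}\circ C^{2n}$ and $K_i^n = L_i^n \cup M_i^n$. The paper's proof is terser and simply asserts that the concatenated objects satisfy the desired conditions, whereas you spell out the exactness and formal-isotopy verifications; these additions are fine but not essential to the argument.
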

\begin{proof}
Applying Theorem \ref{thm: leg_cobordisms} to the formally isotopic Legendrians $\partial L_i^n \subset \partial W^{2n}$, we get a smoothly trivial Weinstein cobordism $C^{2n}$ with $\partial_- C^{2n} = \partial W^{2n}$ and regular Lagrangian cobordisms $J_i^n \subset C^{2n}$ such that $\partial_- J_i^{n} = \partial L_i^n$ and $\partial_+ J_i^n$ all coincide. Then $K_i^n := L_i^n \circ J_i^n \subset W^{2n} \circ C^{2n}  =: X^{2n}$ satisfies the desired conditions. 
\end{proof}
The Lagrangians $L_i^n$ do not have to be regular and the domain $W^{2n}$ does not have to be Weinstein. However if $W^{2n}$ is Weinstein, then so is $X^{2n}$ and if $L_i^n \subset W^{2n}$ are regular, so are $K_i^{n} \subset X^{2n}$.
Unlike in Corollary \ref{cor: max_lagrangians}, here we only need a \textit{single} Weinstein embedding $\psi: W^{2n} \hookrightarrow X^{2n}$ to make the claim that $K_i^n$ extends $L_i^n$, i.e. $\psi^{-1}(K_i^n) = L_i^n$. This is possible since we do not require the $K_i^n$ to coincide (and indeed they may have different smooth topology); we  only require their Legendrian boundaries $\partial K_i^n$ to coincide. 	
As in Theorem \ref{thm: leg_cobordisms}, we do not know whether it is possible to make $X^{2n}, W^{2n}$ be Weinstein homotopic. 

Applying the h-principle for flexible Lagrangians \cite{EGL}  to Corollary \ref{cor: Lag_fillings}, we can construct Legendrians with many Lagrangians fillings that have prescribed smooth topology. 
\begin{corollary}\label{cor: Lag_fillings_flexible}
Suppose that $W^{2n}, n \ge 3,$ is an almost Weinstein domain and $L_1^n, \cdots, L_k^n \subset W^{2n}$ are formal Lagrangians with formally isotopic boundaries. Then there exists a Weinstein domain $X^{2n}$ almost symplectomorphic to $W^{2n}$ and regular Lagrangians $K_i^{n} \subset X^{2n}$ formally isotopic to $L_i^n$ with Legendrian isotopic boundaries. 
\end{corollary}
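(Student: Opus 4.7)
The plan is to combine Eliashberg's existence h-principle, the h-principle for flexible Lagrangians from \cite{EGL}, the maximal construction of Theorem \ref{thm: finitely_many_subdomains2}, and Corollary \ref{cor: Lag_fillings}. Since $W^{2n}$ is almost Weinstein and $n \ge 3$, I first use Eliashberg's h-principle to put a flexible Weinstein structure on $W$, so that the remaining steps take place inside the Weinstein category.

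Next, I apply the flexible Lagrangian h-principle of \cite{EGL} separately to each formal Lagrangian $L_i$, producing a Weinstein domain $W_i$ almost symplectomorphic to $W$ and a regular Lagrangian embedding $L_i' \hookrightarrow W_i$ that is formally Lagrangian isotopic to $L_i$. In particular each genuine Legendrian boundary $\partial L_i' \subset \partial W_i$ is formally Legendrian isotopic to the original formal boundary $\partial L_i$; because the $\partial L_i$ are pairwise formally isotopic by hypothesis, so are the $\partial L_i'$.

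The $W_i$ are pairwise almost symplectomorphic Weinstein domains, so Theorem \ref{thm: finitely_many_subdomains2} produces a Weinstein domain $W'$ almost symplectomorphic to $W$ containing each $W_i$ as a Weinstein subdomain with $W' \setminus W_i$ smoothly trivial. The Legendrians $\partial L_i' \subset \partial W_i$ now sit in the interior of $W'$, so I need to extend each $L_i'$ to a regular Lagrangian $K_i' \subset W'$ with Legendrian boundary in $\partial W'$. Exactly as in the proof of Theorem \ref{thm: stacking2}, I apply Thom's jet transversality to Legendrian isotope $\partial L_i'$ inside $\partial W_i$ off the isotropic attaching spheres of the handles of $W' \setminus W_i$ (these have dimension at most $n-1 < n$, so the jet-transversality hypothesis is satisfied). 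The Liouville flow on the smoothly trivial cobordism $W' \setminus W_i$ then carries this displaced Legendrian up to $\partial W'$ along a trivial Lagrangian cylinder, which I concatenate with $L_i'$ to obtain the desired regular Lagrangian $K_i' \subset W'$ whose Legendrian boundary $\partial K_i' \subset \partial W'$ is still formally isotopic to $\partial L_i$.

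Finally I apply Corollary \ref{cor: Lag_fillings} to the regular Lagrangians $K_1', \ldots, K_k' \subset W'$, whose Legendrian boundaries in $\partial W'$ are pairwise formally isotopic by construction, to produce a Weinstein domain $X^{2n}$ almost symplectomorphic to $W'$ (hence to $W$) and regular Lagrangians $K_i^n \subset X^{2n}$ formally isotopic to $K_i'$, and therefore to $L_i$, with Legendrian isotopic boundaries. The main obstacle I anticipate is the extension step in the previous paragraph: ensuring that the Legendrian perturbation of $\partial L_i'$ off the attaching spheres of $W' \setminus W_i$ and the subsequent Liouville-flow cylinder together assemble with $L_i'$ into a genuinely \emph{regular} Lagrangian in $W'$, which requires the cylinder to meet $\partial W'$ in a single component and to be disjoint from the skeleta of the other subdomains $W_j$ for $j \ne i$.
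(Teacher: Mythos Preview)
Your argument is correct, but it takes a detour that the paper's intended one-line proof avoids. The paper simply says ``applying the h-principle for flexible Lagrangians \cite{EGL} to Corollary \ref{cor: Lag_fillings}'': the point is that because the $L_i$ have \emph{non-empty} boundary, the EGL existence h-principle realizes each formal $L_i$ as a genuine flexible (hence regular) Lagrangian $L_i'$ \emph{in the same fixed} Weinstein domain $W_{flex}$, without modifying the Weinstein structure. One then feeds these $L_i' \subset W_{flex}$ directly into Corollary \ref{cor: Lag_fillings}. The need for separate domains $W_i$ arises in EGL only for \emph{closed} Lagrangians (as in Corollary \ref{cor: finitely_lagrangians}), where one must build exotic structures like $T^*S^n_L$; that phenomenon is absent here.

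Consequently your steps 3--4 (stacking the $W_i$ via Theorem \ref{thm: finitely_many_subdomains2} and then extending each $L_i'$ across the smoothly trivial cobordism $W' \setminus W_i$ by displacing $\partial L_i'$ off the attaching isotropics and flowing up with the Liouville field) are valid but superfluous. The regularity concern you flag in the last paragraph is real in principle but also dissolves in the paper's approach, since no extension step is ever needed. What your longer route does illustrate nicely is that the same conclusion would hold even if EGL only produced the $L_i'$ in varying almost symplectomorphic domains---so in effect you have proved a slightly more robust statement, at the cost of invoking more machinery.
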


Using Corollary \ref{cor: Lag_fillings_flexible}, we can produce Legendrians with many Lagrangian fillings with different topology. Now we show how to produce Legendrians with many fillings that are diffeomorphic and even formally isotopic but are not isotopic through Lagrangians with Legendrian boundary.  The first such examples were constructed in \cite{Ekholm_Honda_Kalman_Lag_cob}; these are Legendrian links  in $(S^3, \xi_{std})$ with smoothly isotopic Lagrangian fillings in $B^4$ that have increasing genus. In the following result, we produce some high-dimensional examples of Lagrangian disk fillings in an exotic cotangent bundle; it is possible to modify our construction to produce fillings and domains with much more general topology. 
\begin{corollary}\label{cor: exotic disks_fillings}
For all $k$, there exists a Weinstein domain $W^{2n}, n\ge 3,$  almost symplectomorphic to $T^*M^n$ that has $k$ formally isotopic regular Lagrangian disks with the same Legendrian boundary that are not isotopic through Lagrangians with Legendrian boundary.  
\end{corollary}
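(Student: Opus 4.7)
The plan is to parallel the proof strategy of Corollary \ref{cor: exotic_lagrangians2}, replacing closed Lagrangians by Lagrangian disks with Legendrian boundary. Corollary \ref{cor: Lag_fillings_flexible} will serve as the main h-principle input, and a common ``test'' regular Lagrangian disk will distinguish the fillings via wrapped Floer cohomology, exactly as in the closed case.

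First I would build a model almost Weinstein domain $V^{2n}$ almost symplectomorphic to $T^*M^n$ containing $k$ formal Lagrangian disks $L_1,\ldots,L_k$ with formally Legendrian isotopic boundaries, together with a regular Lagrangian disk $T \subset V$ to serve as the test. The design principle is to arrange the formal linking data of $T$ with each $L_i$ to vary with $i$: concretely I would attach to the almost Weinstein structure on $T^*M^n$ one additional flexible Weinstein $n$-handle whose cocore is $T$, and choose each $L_i$ so that its boundary sits, up to formal Legendrian isotopy, in a distinct linking relation with the belt sphere of that handle (for instance, wrapping around the handle a number of times depending on $i$). Next I would apply Corollary \ref{cor: Lag_fillings_flexible} to the tuple $(L_1,\ldots,L_k) \subset V$. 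This produces a Weinstein domain $W^{2n}$ almost symplectomorphic to $V$, hence to $T^*M^n$, together with regular Lagrangian disks $K_1,\ldots,K_k \subset W$ formally isotopic to the $L_i$ and pairwise Legendrian isotopic at the boundary; an ambient contact isotopy of $\partial W$ then arranges $\partial K_1 = \cdots = \partial K_k$. Because the h-principle of Corollary \ref{cor: Lag_fillings_flexible} only modifies the Weinstein data in a neighborhood of the $L_i$, the test disk $T$ remains a regular Lagrangian in $W$ and still carries the formal linking data with respect to the $K_i$ that was prescribed in $V$.

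To distinguish the $K_i$, I would use the invariance of wrapped Floer cohomology with a fixed regular Lagrangian under Lagrangian isotopy through Lagrangians with Legendrian boundary: it then suffices to verify that $WH(T,K_i;W)$ depends on $i$. This last step is the main obstacle, since the h-principle preserves only formal isotopy class and could a priori collapse the formal distinctions at the Floer-theoretic level. I would address this along one of two routes: (a) a Bourgeois-Ekholm-Eliashberg-type surgery formula expressing $WH(T,K_i;W)$ in terms of Reeb chord counts that can be read off from the formal linking data designed in the first step; or (b) replace the use of Corollary \ref{cor: Lag_fillings_flexible} by a direct construction in which the $L_i$ are cocores of actual Weinstein handles with linked attaching spheres (so that regularity is automatic), and then merge the distinct Legendrian boundaries of the resulting disks via Corollary \ref{cor: Lag_fillings}. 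In route (b), $WH(T,K_i;W)$ is controlled by classical Chekanov-Eliashberg invariants of the ambient Legendrian link, which can be arranged to have pairwise different ranks as $i$ varies, yielding the required non-isotopy of the $K_i$ through Lagrangians with Legendrian boundary.
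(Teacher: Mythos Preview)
Your proposal has a genuine gap at precisely the point you identify as the ``main obstacle,'' and neither route (a) nor (b) closes it. The h-principle of Corollary \ref{cor: Lag_fillings_flexible} takes \emph{formal} Lagrangians as input and outputs genuine regular Lagrangians in the same formal class; but formal isotopy class does not determine wrapped Floer cohomology, so there is no reason the output disks $K_i$ should have distinct $WH(T,K_i;W)$. Route (a) fails because Reeb chord counts and their differentials are not formal invariants. Route (b) is too vague as stated, and in any case Chekanov--Eliashberg invariants of the attaching link are invariants of the \emph{Legendrian} boundaries, not of the Lagrangian fillings with a common boundary, so they cannot distinguish the $K_i$ once you have forced $\partial K_1 = \cdots = \partial K_k$. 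There is also a topology slip: attaching an extra $n$-handle to $T^*M^n$ to produce the test cocore $T$ makes $V$ (and hence $W$) no longer almost symplectomorphic to $T^*M^n$.

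The paper's argument runs in the opposite direction and avoids this obstacle entirely. It begins with \emph{genuine} Lagrangian disks $D^n_i \subset T^*M^n_{std}$, namely the Abouzaid--Seidel disks (graphs of differentials of certain functions), which are already known to have pairwise distinct $WH(D^n_i, M^n; T^*M^n_{std})$ with the closed zero-section $M^n$ as test Lagrangian. One then applies Corollary \ref{cor: Lag_fillings} (not \ref{cor: Lag_fillings_flexible}) to extend these disks to $K_i^n$ with coinciding Legendrian boundary in a larger $W^{2n}$; the crucial feature of that corollary is that it provides a \emph{single} Weinstein embedding $\psi: T^*M^n_{std} \hookrightarrow W^{2n}$ with $\psi^{-1}(K_i^n) = D_i^n$ for all $i$. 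Since the test Lagrangian $\psi(M^n)$ is closed and contained in the subdomain, and $\partial D_i^n$ is connected, the no-escape lemma gives $WH(\psi(M^n), K_i^n; W) \cong WH(M^n, D_i^n; T^*M^n_{std})$, which are distinct by construction. The key inversion relative to your plan is: start with disks whose Floer theory is already computed, then merge boundaries, rather than start with formal data and hope Floer theory cooperates. Note also that the test Lagrangian here is closed (the zero-section), dual to the situation in Corollary \ref{cor: exotic_lagrangians2} where a test disk distinguishes closed Lagrangians.
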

\begin{proof}
By considering the graphs of the differential of certain functions, 
Abouzaid and Seidel \cite{Abouzaid_Seidel} constructed infinitely many Lagrangian disks $D^n_i \subset T^*M^n_{std}$ with connected Legendrian boundary in $\partial T^*M^n_{std}$ such that 
their wrapped Floer homology with the zero-section $WH(D^n_i, M^n; T^*M^n_{std})$ is different for different $i$. In particular, these disks are not isotopic through Lagrangians with Legendrian boundary and it is likely that they do not have the same Legendrian sphere boundaries in $\partial T^*M^n_{std}$; we will modify these examples to make their Legendrian boundaries  coincide. Furthermore, the Lagrangian formal class of these disks is determined by their intersection number with the zero-section and hence it is easy to construct infinitely many such disks that are all formally Lagrangian isotopic. 
 
By Corollary \ref{cor: Lag_fillings}, there exists a Weinstein domain $W^{2n}$ almost symplectomorphic to $T^*M^n_{std}$
that contains regular Lagrangian disks $K_i^n$ with the same Legendrian sphere boundary in $\partial W^{2n}$.
Furthermore, $K_i^n$ extends $D_i^n$ in the sense that there is a single Weinstein embedding $\psi: T^*M^n_{std} \hookrightarrow W^{2n}$ such that $\psi^{-1}(K_i^{n}) = D_i^n \subset T^*M^n_{std}$ for all $i$. We will use $\psi(M^n) \subset W^{2n}$ as a test Lagrangian. The intersection points of $\psi(M^n), K_i^n \subset W^{2n}$ are contained in $\psi(T^*M_{std})$ since $\psi(M) \subset \psi(T^*M_{std})$.
Applying the no-escape lemma \cite{Abouzaid_Seidel} to $W^{2n}\backslash \psi(T^*M^n_{std})$, all Floer trajectories in $W^{2n}$ asymptotic to intersection points of $\psi(M^n), K_i^n \subset W^{2n}$ are also contained in $\psi(T^*M^n_{std}) \subset W^{2n}$. The no-escape lemma applies since the intersection points are contained in $\psi(T^*M_{std})$ and the Legendrian boundary $\partial D_i^{n} \subset \partial T^*M^n_{std}$ is connected.
Therefore $WH(\psi(M^n), K_i^n; W^{2n})$ is isomorphic to $WH(\psi(M^n), K_i^n|_{\psi(T^*M^n)}; \psi(T^*M^n_{std}))$. Since $\psi^{-1}(K_i^{n}) = D_i^n \subset T^*M^n_{std}$, the latter is isomorphic to $WH(M^n, D_i^n; T^*M^n_{std})$. By construction $WH(M^n, D_i^n; T^*M^n_{std})$ are all different and hence so are $WH(\psi(M^n), K_i^n; W^{2n})$. Therefore, $K_i^n \subset W^{2n}$ are not isotopic through Lagrangians with Legendrian boundary.  
\end{proof}
Corollary \ref{cor: exotic disks_fillings} can be generalized to produce Lagrangians fillings in Weinstein domains with essentially arbitrary topology. We just need some collection of formally isotopic Lagrangians with connected Legendrian boundaries and some closed test Lagrangian whose wrapped Floer homology with these  Lagrangians is different.

\subsection{Weinstein domains with many closed Lagrangian submanifolds}\label{ssec: domains_many_lag}

In Corollaries \ref{cor: Lag_fillings_flexible} and \ref{cor: exotic disks_fillings}, we constructed Legendrians with many Lagrangian fillings that either have different topology or are formally isotopic (and hence diffeomorphic) but not Hamiltonian isotopic. Now we consider the closed case. In Corollary \ref{cor: finitely_lagrangians}, in the Introduction, we constructed Weinstein domains with many closed, exact Lagrangians with different topology. We can also construct Weinstein domains with many closed, exact Lagrangians that are formally isotopic but not Hamiltonian isotopic. The following is Corollary \ref{cor: exotic_lagrangians} from the Introduction. 
 
\begin{corollary}\label{cor: exotic_lagrangians2}
For all $k$, there is a Weinstein domain  $W^{2n}, n \ge 3,$ almost symplectomorphic to $T^*M^n_{std}$ that has $k$ regular Lagrangians that are formally Lagrangian isotopic to $M^n_{std} \subset T^*M_{std}$ (via the almost symplectomorphism to $W$)
but are not Hamiltonian isotopic in $W^{2n}$. 
\end{corollary}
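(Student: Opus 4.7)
The strategy is to dualize the argument of Corollary~\ref{cor: exotic disks_fillings}: whereas that corollary uses a single closed Lagrangian $M^n$ to distinguish many Lagrangian disks, the present corollary reverses the roles and uses a single Lagrangian disk $D$ to distinguish many closed Lagrangian copies of $M^n$. The input is the same family of Abouzaid--Seidel examples, reorganized via the maximal Lagrangian construction of Corollary~\ref{cor: max_lagrangians}.

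By Abouzaid--Seidel \cite{Abouzaid_Seidel}, and as recalled in the proof of Corollary~\ref{cor: exotic disks_fillings}, there exist Lagrangian disks $D_1^n, \ldots, D_k^n \subset T^*M^n_{std}$ with connected Legendrian boundary in $\partial T^*M^n_{std}$, pairwise formally Lagrangian isotopic, whose wrapped Floer cohomologies $WH(D_i^n, M^n; T^*M^n_{std})$ with the zero section are pairwise distinct. I regard these as formally isotopic regular Lagrangian embeddings $\phi_i : D^n \hookrightarrow T^*M^n_{std}$ of a single abstract disk, and apply Corollary~\ref{cor: max_lagrangians} with $L^n = D^n$ and $W^{2n} = T^*M^n_{std}$. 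This yields a Weinstein domain $X^{2n}$ almost symplectomorphic to $T^*M^n_{std}$, a single regular Lagrangian disk $D := j(D^n) \subset X^{2n}$, and formally isotopic Weinstein embeddings $\psi_1, \ldots, \psi_k : T^*M^n_{std} \hookrightarrow X^{2n}$ satisfying $\psi_i \circ \phi_i = j$ for every $i$. Define $M_i := \psi_i(M^n_{std}) \subset X^{2n}$, where $M^n_{std}$ is the zero section. Each $M_i$ is a closed regular Lagrangian, and since the $\psi_i$ are formally isotopic Weinstein embeddings into a domain almost symplectomorphic to $T^*M^n_{std}$, all $M_i$ are formally Lagrangian isotopic to $M^n_{std}$ via the almost symplectomorphism $X^{2n} \cong T^*M^n_{std}$. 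I take the sought-for domain in the statement to be $W^{2n} := X^{2n}$.

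To rule out pairwise Hamiltonian isotopy I use $D$ as a test Lagrangian. All intersection points of $D = \psi_i(D_i^n)$ with $M_i = \psi_i(M^n_{std})$ lie in the Weinstein subdomain $\psi_i(T^*M^n_{std}) \subset W^{2n}$; since $\partial D$ is connected and $M_i$ is closed, the no-escape lemma of \cite{Abouzaid_Seidel} confines all Floer trajectories between these intersection points to $\psi_i(T^*M^n_{std})$, yielding
\[
  WH(D, M_i; W^{2n}) \;\cong\; WH(\psi_i^{-1}(D), \psi_i^{-1}(M_i); T^*M^n_{std}) \;=\; WH(D_i^n, M^n; T^*M^n_{std}).
\]
Because the right-hand side depends nontrivially on $i$, the Lagrangians $M_1, \ldots, M_k$ cannot be pairwise Hamiltonian isotopic in $W^{2n}$. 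The principal technical obstacle is the no-escape step in this dual configuration; this parallels the argument at the end of the proof of Corollary~\ref{cor: exotic disks_fillings}, and the remaining assertions follow directly from the formal-isotopy conclusion of Corollary~\ref{cor: max_lagrangians}.
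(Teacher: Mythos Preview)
Your proof is correct and follows essentially the same approach as the paper: apply Corollary~\ref{cor: max_lagrangians} to the Abouzaid--Seidel disks $D_i^n \subset T^*M^n_{std}$ to obtain a single test disk $D$ and $k$ Weinstein embeddings $\psi_i$, then use the no-escape lemma to identify $WH(\psi_i(M^n_{std}), D; W^{2n})$ with $WH(M^n_{std}, D_i^n; T^*M^n_{std})$ and conclude. The paper likewise remarks that this argument is dual to that of Corollary~\ref{cor: exotic disks_fillings}.
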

\begin{proof}
As in the proof of Corollary \ref{cor: exotic disks_fillings}, we again use the formally isotopic regular Lagrangian disks $D_i^n \subset T^*M_{std}$ produced in \cite{Abouzaid_Seidel} that have different  wrapped Floer homology with the zero-section  $WH(D^n_i, M^n_{std}; T^*M^n_{std})$. By Corollary \ref{cor: max_lagrangians}, 
 there exists a Weinstein domain $W^{2n}$ almost symplectomorphic to $T^*M^n_{std}$ with a single Lagrangian disk $D^n \subset W^{2n}$ with Legendrian boundary and $k$ Weinstein  embeddings $\phi_i: T^*M_{std} \hookrightarrow W$ such that $\phi_i^{-1}(D^n) = D^n_i \subset T^*M_{std}$.
 By the no-escape lemma \cite{Abouzaid_Seidel} (which requires the fact that $\partial D_i^n$ is connected),  
 we again have 
$WH(\phi_i(M^n_{std}), D^n; W^{2n}) \cong WH(\phi_i(M^n_{std}), D^n|_{\phi_i(T^*M)}; \phi_i(T^*M^n_{std}))$. Since 
$\phi_i^{-1}(D^n) = D^n_i \subset T^*M_{std}$, this is isomorphic to 
$WH(M^n_{std}, D^n_i; T^*M^n_{std})$, which are different by assumption.
Hence $WH(\phi_i(M^n_{std}), D^n; W^{2n})$ are also all different and so 
$\phi_i(M^n_{std})$ are not  Hamiltonian isotopic for different $i$. 
Furthermore, these Lagrangians are formally isotopic to the zero-section $M^n_{std} \subset T^*M^n_{std}$ since $\phi_i(M^n) \subset W^{2n}$ is obtained from $M^n_{std} \subset T^*M_{std}$ by adding a smoothly trivial Weinstein cobordisms to $T^*M_{std}$. 
\end{proof}
We note that the proof of Corollary \ref{cor: exotic_lagrangians2} is in some sense opposite to the proof of Corollary \ref{cor: exotic disks_fillings}. In the former, we used a single Lagrangian disk as a test Lagrangian to distinguish closed Lagrangians. In the latter, we used a single closed Lagrangian sphere as a test Lagrangian to distinguish Lagrangian disks (with the same boundary). 
We also note that for any $k \ge 2$, the domain $W^{2n}$ as constructed above is always exotic and not Weinstein homotopic to $T^*M^n_{std}$. Otherwise there would be (at least) two closed Lagrangians $M^n_1, M^n_2 \subset T^*M_{std}$ and a Lagrangian disk with Legendrian boundary $D^n \subset T^*M_{std}$ such that $WH(M^n_1, D^n; T^*M_{std}) \ne WH(M^n_2, D^n; T^*M_{std})$. However by  \cite{Abouzaid, FukSS}, all closed exact Lagrangian submanifolds of $T^*M_{std}$ are equivalent in the wrapped Fukaya category of $T^*M_{std}$ and hence must have isomorphic wrapped Floer homology with any other test Lagrangian. 
Finally, we observe that it is easy to modify the topology of the ambient Weinstein domain to produce more general examples. We can add any Weinstein cobordism $C^{2n}$  to $W^{2n}$ and $\phi_i(M^n) \subset W^{2n} \subset W^{2n} \circ C^{2n}$ will still not be Hamiltonian isotopic since they still have different wrapped Floer homology with $D^n \subset  W^{2n} \circ C^{2n}$, where we extend $D^n \subset W^{2n}$ trivially to $D^n \subset  W^{2n} \circ C^{2n}$.

\bibliographystyle{abbrv}
\bibliography{sources}

\begin{thebibliography}{10}

\bibitem{Abouzaid}
M.~Abouzaid.
\newblock Nearby {L}agrangians with vanishing {M}aslov class are homotopy
  equivalent.
\newblock {\em Invent. Math.}, 189(2):251--313, 2012.

\bibitem{abouzaid_seidel_recombination}
M.~Abouzaid and P.~Seidel.
\newblock Altering symplectic manifolds by homologous recombination, 2010.
\newblock arXiv:1007.3281.

\bibitem{Abouzaid_Seidel}
M.~Abouzaid and P.~Seidel.
\newblock An open string analogue of {V}iterbo functoriality.
\newblock {\em Geom. Topol.}, 14(2):627--718, 2010.

\bibitem{small_fillings_Akbulut_Yasui}
S.~Akbulut and K.~Yasui.
\newblock Infinitely many small exotic {S}tein fillings.
\newblock {\em J. Symplectic Geom.}, 12(4):673--684, 2014.

\bibitem{Smith_fillings}
A.~Akhmedov, J.~B. Etnyre, T.~E. Mark, and I.~Smith.
\newblock A note on {S}tein fillings of contact manifolds.
\newblock {\em Math. Res. Lett.}, 15(6):1127--1132, 2008.

\bibitem{Ozbagci_arbitrary_fundamental_gp}
A.~Akhmedov and B.~Ozbagci.
\newblock Exotic {S}tein fillings with arbitrary fundamental group, 2012.
\newblock arXiv:1212.1743.

\bibitem{Geiges_subcritical}
K.~Barth, H.~Geiges, and K.~Zehmisch.
\newblock The diffeomorphism type of symplectic fillings, 2016.
\newblock arXiv:1607.03310.

\bibitem{large_fillings_Baykur_vanHornMorris}
R.~I. Baykur and J.~Van Horn-Morris.
\newblock Families of contact {$3$}-manifolds with arbitrarily large {S}tein
  fillings.
\newblock {\em J. Differential Geom.}, 101(3):423--465, 2015.
\newblock With an appendix by Samuel Lisi and Chris Wendl.

\bibitem{BEMtwisted}
M.~S. Borman, Y.~Eliashberg, and E.~Murphy.
\newblock Existence and classification of overtwisted contact structures in all
  dimensions.
\newblock {\em Acta Math.}, 215(2):281--361, 2015.

\bibitem{Bourgeois_van_Koert_overtwisted}
F.~Bourgeois and O.~van Koert.
\newblock Contact homology of left-handed stabilizations and plumbing of open
  books.
\newblock {\em Commun. Contemp. Math.}, 12(2):223--263, 2010.

\bibitem{BCS}
J.~Bowden, D.~Crowley, and A.~I. Stipsicz.
\newblock The topology of {S}tein fillable manifolds in high dimensions {I}.
\newblock {\em Proc. Lond. Math. Soc. (3)}, 109(6):1363--1401, 2014.

\bibitem{Bowden_Crowley_Stipsicz_stein_fillable_II}
J.~Bowden, D.~Crowley, and A.~I. Stipsicz.
\newblock The topology of {S}tein fillable manifolds in high dimensions, {II}.
\newblock {\em Geom. Topol.}, 19(5):2995--3030, 2015.
\newblock With an appendix by Bernd C. Kellner.

\bibitem{Sabloff}
C.~Cao, N.~Gallup, K.~Hayden, and J.~M. Sabloff.
\newblock Topologically distinct {L}agrangian and symplectic fillings.
\newblock {\em Math. Res. Lett.}, 21(1):85--99, 2014.

\bibitem{CMP}
R.~Casals, E.~Murphy, and F.~Presas.
\newblock Geometric criteria for overtwistedness, 2015.
\newblock arXiv:1503.06221.

\bibitem{Chantraine_Lagrangian_cob}
B.~Chantraine.
\newblock Lagrangian concordance of {L}egendrian knots.
\newblock {\em Algebr. Geom. Topol.}, 10(1):63--85, 2010.

\bibitem{CE12}
K.~Cieliebak and Y.~Eliashberg.
\newblock {\em From {S}tein to {W}einstein and back}, volume~59 of {\em
  American Mathematical Society Colloquium Publications}.
\newblock American Mathematical Society, Providence, RI, 2012.
\newblock Symplectic geometry of affine complex manifolds.

\bibitem{conway_etnyre_caps}
J.~Conway and J.~Etnyre.
\newblock Contact surgery and symplectic caps, 2018.
\newblock Preprint.

\bibitem{DG}
F.~Ding and H.~Geiges.
\newblock {$E_8$}-plumbings and exotic contact structures on spheres.
\newblock {\em Int. Math. Res. Not.}, (71):3825--3837, 2004.

\bibitem{Ding_Geiges_contact_surgerypresentation_dim3}
F.~Ding and H.~Geiges.
\newblock A {L}egendrian surgery presentation of contact 3-manifolds.
\newblock {\em Math. Proc. Cambridge Philos. Soc.}, 136(3):583--598, 2004.

\bibitem{Ekholm_Honda_Kalman_Lag_cob}
T.~Ekholm, K.~Honda, and T.~K\'alm\'an.
\newblock Legendrian knots and exact {L}agrangian cobordisms.
\newblock {\em J. Eur. Math. Soc. (JEMS)}, 18(11):2627--2689, 2016.

\bibitem{Eliashberg_overtwisted3}
Y.~Eliashberg.
\newblock Classification of overtwisted contact structures on {$3$}-manifolds.
\newblock {\em Invent. Math.}, 98(3):623--637, 1989.

\bibitem{Eli90}
Y.~Eliashberg.
\newblock Topological characterization of {S}tein manifolds of dimension
  {$>2$}.
\newblock {\em Internat. J. Math.}, 1(1):29--46, 1990.

\bibitem{Eliashberg_20_years}
Y.~Eliashberg.
\newblock Contact {$3$}-manifolds twenty years since {J}. {M}artinet's work.
\newblock {\em Ann. Inst. Fourier (Grenoble)}, 42(1-2):165--192, 1992.

\bibitem{Eliashberg_caps}
Y.~Eliashberg.
\newblock A few remarks about symplectic filling.
\newblock {\em Geom. Topol.}, 8:277--293, 2004.

\bibitem{eliashberg_revisited}
Y.~Eliashberg.
\newblock Weinstein manifolds revisited, 2017.
\newblock arXiv:1707.03442.

\bibitem{EGL}
Y.~Eliashberg, S.~Ganatra, and O.~Lazarev.
\newblock Flexible {L}agrangians, 2015.
\newblock arXiv:1510.01287.

\bibitem{EGH}
Y.~Eliashberg, A.~Givental, and H.~Hofer.
\newblock Introduction to symplectic field theory.
\newblock {\em Geom. Funct. Anal.}, (Special Volume, Part II):560--673, 2000.
\newblock GAFA 2000 (Tel Aviv, 1999).

\bibitem{Eliashberg_Kim_Polterovich_RP2n+1}
Y.~Eliashberg, S.~S. Kim, and L.~Polterovich.
\newblock Geometry of contact transformations and domains: orderability versus
  squeezing.
\newblock {\em Geom. Topol.}, 10:1635--1747, 2006.

\bibitem{eliashberg_mishachev}
Y.~Eliashberg and N.~M. Mishachev.
\newblock {\em Introduction to the h-principle}.
\newblock Number~48. American Mathematical Soc., 2002.

\bibitem{EM}
Y.~Eliashberg and E.~Murphy.
\newblock Lagrangian caps.
\newblock {\em Geom. Funct. Anal.}, 23(5):1483--1514, 2013.

\bibitem{Etnyre_planarOBD}
J.~B. Etnyre.
\newblock Planar open book decompositions and contact structures.
\newblock {\em Int. Math. Res. Not.}, (79):4255--4267, 2004.

\bibitem{etnyre_fukukawa_embeddings}
J.~B. Etnyre and R.~Furukawa.
\newblock Braided embeddings of contact 3-manifolds in the standard contact
  5-sphere.
\newblock 2015.
\newblock arXiv:1510.03091.

\bibitem{Etnyre_Honda_caps}
J.~B. Etnyre and K.~Honda.
\newblock On symplectic cobordisms.
\newblock {\em Mathematische Annalen}, 323(1):31--39, May 2002.

\bibitem{etnyre_lekili_embedding}
J.~B. Etnyre and Y.~Lekili.
\newblock Embedding all contact 3-manifolds in a fixed contact 5-manifold.
\newblock 2017.
\newblock arXiv:1712.09642.

\bibitem{FukSS}
K.~Fukaya, P.~Seidel, and I.~Smith.
\newblock The symplectic geometry of cotangent bundles from a categorical
  viewpoint.
\newblock In {\em Homological mirror symmetry}, volume 757 of {\em Lecture
  Notes in Phys.}, pages 1--26. Springer, Berlin, 2009.

\bibitem{Ganatra_Pardon_Shende_ii}
S.~Ganatra, J.~Pardon, and V.~Shende.
\newblock Structural results in wrapped {F}loer theory, 2018.
\newblock arXiv:1809.03427.

\bibitem{Geiges}
H.~Geiges.
\newblock Applications of contact surgery.
\newblock {\em Topology}, 36(6):1193--1220, 1997.

\bibitem{Giroux_ICM}
E.~Giroux.
\newblock G\'eom\'etrie de contact: de la dimension trois vers les dimensions
  sup\'erieures.
\newblock In {\em Proceedings of the {I}nternational {C}ongress of
  {M}athematicians, {V}ol. {II} ({B}eijing, 2002)}, pages 405--414. Higher Ed.
  Press, Beijing, 2002.

\bibitem{Giroux_Pardon_Lef}
E.~Giroux and J.~Pardon.
\newblock Existence of {L}efschetz fibrations on {S}tein and {W}einstein
  domains.
\newblock {\em Geom. Topol.}, 21(2):963--997, 2017.

\bibitem{gromov_hprinciple}
M.~Gromov.
\newblock {\em Partial differential relations}, volume~9 of {\em Ergebnisse der
  Mathematik und ihrer Grenzgebiete (3) [Results in Mathematics and Related
  Areas (3)]}.
\newblock Springer-Verlag, Berlin, 1986.

\bibitem{presas_tight_neighborhood}
L.~Hernández-Corbato, L.~Martín-Merchán, and F.~Presas.
\newblock Tight neighborhoods of contact submanifolds, 2018.
\newblock arXiv:1802.07006.

\bibitem{Kronheimer_Mrowka_PropP}
P.~B. Kronheimer and T.~S. Mrowka.
\newblock Witten's conjecture and {P}roperty {P}.
\newblock {\em Geom. Topol.}, 8:295--310, 2004.

\bibitem{Lazarev_flexible_fillings}
O.~Lazarev.
\newblock Contact manifolds with flexible fillings, 2016.
\newblock arXiv:1610.04837.

\bibitem{Lazarev_Lagrangian_regular}
O.~Lazarev.
\newblock H-principles for regular {L}agrangians, 2018.
\newblock arXiv:1808.05993.

\bibitem{Lazarev_critical_points}
O.~Lazarev.
\newblock Simplifying {W}einstein {M}orse functions, 2018.
\newblock arXiv:1808.03676.

\bibitem{caps_Li_Mak_Yasui}
T.-J. Li, C.~Y. Mak, and K.~Yasui.
\newblock Calabi-{Y}au caps, uniruled caps and symplectic fillings.
\newblock {\em Proc. Lond. Math. Soc. (3)}, 114(1):159--187, 2017.

\bibitem{Lisca1997}
P.~Lisca and G.~Mati{\'{c}}.
\newblock Tight contact structures and {S}eiberg--{W}itten invariants.
\newblock {\em Inventiones mathematicae}, 129(3):509--525, Aug 1997.

\bibitem{Massot_Niederkruger_Wendl_weak_filling}
P.~Massot, K.~Niederkr\"uger, and C.~Wendl.
\newblock Weak and strong fillability of higher dimensional contact manifolds.
\newblock {\em Invent. Math.}, 192(2):287--373, 2013.

\bibitem{McD}
D.~McDuff.
\newblock Symplectic manifolds with contact type boundaries.
\newblock {\em Invent. Math.}, 103(3):651--671, 1991.

\bibitem{MM}
M.~McLean.
\newblock Lefschetz fibrations and symplectic homology.
\newblock {\em Geom. Topol.}, 13(4):1877--1944, 2009.

\bibitem{Mrowka_concordance_overtwisted}
T.~Mrowka and Y.~Rollin.
\newblock Legendrian knots and monopoles.
\newblock {\em Algebr. Geom. Topol.}, 6:1--69, 2006.

\bibitem{Murphy11}
E.~Murphy.
\newblock Loose {L}egendrian embeddings in high dimensional contact manifolds,
  2012.
\newblock arXiv:1201.2245.

\bibitem{MS}
E.~Murphy and K.~Siegel.
\newblock Subflexible symplectic manifolds, 2015.
\newblock arXiv:1510.01867.

\bibitem{Oba_infinite_fillings}
T.~Oba.
\newblock Higher-dimensional contact manifolds with infinitely many {S}tein
  fillings, 2016.
\newblock arXiv:1608.01427.

\bibitem{OS}
B.~Ozbagci and A.~I. Stipsicz.
\newblock Contact 3-manifolds with infinitely many {S}tein fillings.
\newblock {\em Proc. Amer. Math. Soc.}, 132(5):1549--1558 (electronic), 2004.

\bibitem{pancholi_contact_embeddings}
D.~M. Pancholi and S.~Pandit.
\newblock Iso-contact embeddings of manifolds in co-dimension $2$, 2018.
\newblock arXiv:1808.04059.

\bibitem{JP}
J.~Pardon.
\newblock Contact homology and virtual fundamental cycles, 2015.
\newblock arXiv:1508.03873.

\bibitem{Seidel_Smith_ramanujam_surface}
P.~Seidel and I.~Smith.
\newblock The symplectic topology of {R}amanujam's surface.
\newblock {\em Comment. Math. Helv.}, 80(4):859--881, 2005.

\bibitem{Shende_treumann_williams_combinatorics_Lag_surf}
V.~Shende, D.~Treumann, and H.~Williams.
\newblock On the combinatorics of exact {L}agrangian surfaces, 2016.
\newblock arXiv:1603.07449.

\bibitem{U}
I.~Ustilovsky.
\newblock Infinitely many contact structures on {$S^{4m+1}$}.
\newblock {\em Internat. Math. Res. Notices}, (14):781--791, 1999.

\bibitem{Vianna_del_pezzo}
R.~Vianna.
\newblock Infinitely many monotone {L}agrangian tori in del {P}ezzo surfaces.
\newblock {\em Selecta Math. (N.S.)}, 23(3):1955--1996, 2017.

\bibitem{Vianna_infinite_tori_cp2}
R.~F. d.~V. Vianna.
\newblock Infinitely many exotic monotone {L}agrangian tori in {$\Bbb{CP}^2$}.
\newblock {\em J. Topol.}, 9(2):535--551, 2016.

\bibitem{viterbo1}
C.~Viterbo.
\newblock Functors and computations in {F}loer homology with applications. {I}.
\newblock {\em Geom. Funct. Anal.}, 9(5):985--1033, 1999.

\bibitem{Weinstein_category}
A.~Weinstein.
\newblock The symplectic ``category''.
\newblock In {\em Differential geometric methods in mathematical physics
  ({C}lausthal, 1980)}, volume 905 of {\em Lecture Notes in Math.}, pages
  45--51. Springer, Berlin-New York, 1982.

\bibitem{Wendl_planar_filling}
C.~Wendl.
\newblock Strongly fillable contact manifolds and {$J$}-holomorphic foliations.
\newblock {\em Duke Math. J.}, 151(3):337--384, 2010.

\bibitem{Wendl_blog3}
C.~Wendl.
\newblock A biased survey on symplectic fillings, part 3 (handles, caps, and
  nonfillability), 2014.
\newblock
  \url{https://symplecticfieldtheorist.wordpress.com/2014/10/28/a-biased-survey-on-symplectic-fillings-part-3-handles-caps-and-nonfillability/},.

\bibitem{Wendl_blog7}
C.~Wendl.
\newblock A biased survey on symplectic fillings, part 7 (maximal elements and
  co-fillability), 2014.
\newblock
  \url{https://symplecticfieldtheorist.wordpress.com/2014/12/29/a-biased-survey-on-symplectic-fillings-part-7-maximal-elements-and-co-fillability/},.

\end{thebibliography}

\end{document}